\documentclass[12pt,a4paper]{amsart}
\usepackage[letterpaper, margin=0.8in]{geometry}
\usepackage{amsfonts,amssymb,amsthm,epsf, graphics, bbm, mathabx, verbatim, caption, subcaption, enumerate}
\usepackage{comment}
\usepackage{hyperref}
\usepackage{diagbox}
\usepackage{amsmath}
\usepackage{amsthm}
\usepackage{amssymb}
\usepackage{cite}
\usepackage{tikz}
\usetikzlibrary{patterns}
\usepackage{float}
\usepackage{booktabs,makecell}

\newtheorem{proposition}{Proposition}

\newtheorem{lemma}{Lemma}
\theoremstyle{definition}
\newtheorem{definition}{Definition}
\newtheorem{theorem}{Theorem}
\newtheorem{corollary}[theorem]{Corollary}
\newtheorem*{theoremA}{Theorem A}
\newtheorem*{theoremB}{Theorem B}

\newtheorem{example}{Example}

\theoremstyle{remark}
\newtheorem{remark}{Remark}

\renewcommand{\L}{\mathcal{L}}
\newcommand{\R}{\mathbb{R}}

\newcommand{\p}{\partial}
\newcommand{\eps}{\varepsilon}
\newcommand{\HH}{\mathcal{H}}

\newcommand{\M}{\mathcal{M}}

\newcommand{\norm}[1]{\left\|#1\right\|}

\title{A Note on Maximal Operators for Moment Curves}
\author{Chenjian Wang}
\date{}

\begin{document}

\begin{abstract}
We consider a type of maximal operators associated to moment curves in $\R^d,d\geq 3.$ We derive $L^p$ mapping properties for these operators. In a special case, the estimate is sharp.

\end{abstract} %%%%%%%%%

 \maketitle

\let\thefootnote\relax
\footnotetext{
Key Words: Geometric maximal operators, Kakeya type problem.

Mathematical subject classification: Primary: 42B25-06.} %%%%%%%%%%

\section{Introduction}
Geometric measure theory is the study of the 
 geometric properties of sets, typically in Euclidean space, through measure theory. An interesting genre of problems in this area is determining the size, usually quantified via Lebesgue measure or dimensions, of sets that are quantitatively known to be large. A prototypical example of such a problem is the notorious Kakeya conjecture which aims to quantify the size of a set that contains a unit line segment in every direction. Such sets are called Kakeya sets. Besicovitch \cite{Besicovitch_1919} showed that, contrary to intuition, Kakeya sets can have Lebesgue measure zero. This led to the conjecture which states that Kakeya sets must have full Hausdorff dimension. 
 % which says that sets that contain unit line segments pointing in each direction (known as Kakeya sets) have full Hausdorff dimension. 
 The problem has been solved for $d=2$ \cite{Davies_1971,Cordoba_1977}. Recently, Wang and Zahl claimed the resolution of three-dimensional case \cite{wang2022stickykakeyasetssticky,AssouadDimKakeya,wang2025volumeestimatesunionsconvex}  and the problem is still open in all higher dimensions \cite{Bourgain1991,Wolff1995,Katz2000}.

 The intense interest in the Kakeya conjecture has led to the study of related problems in the ``curved" setting.
 % A curved variant of such problem is also considered. 
 % In the most important case
 For $d=2$, Besicovitch and Rado \cite{BR} and Kinney \cite{Kinney} constructed a Lebesgue null set that contains all circles with radius $r\in[1,2]$. Inspired by this, such a set in $\R^d,d\geq 2$ (replace circles by $r \mathbb S^{d-1}$ for $d\geq 3$) is called Besicovitch-Rado-Kinney set, or a BRK set for short. One can ask a similar question to the Kakeya conjecture: what is the Hausdorff dimension of a BRK set? The problem has been solved for all $d\geq 2$: for $d\geq 3$ in \cite{Kolasa1999OnSV} and $d=2$ in \cite{Wolff1997}. It is now known that all BRK sets in $\R^d,\ d\geq 2$ have full Hausdorff dimension.

In this note, we consider problems of a similar flavor in $\R^d,d\geq 3$. Let us denote by 
\begin{equation*}
\gamma(t):=(t,t^2,...,t^d)
\end{equation*}
the standard moment curve in $\R^d$. We want to study sets that contain many affine copies of $\gamma$. More precisely, we introduce the following BRK-type sets. Throughout the note, we denote 
\begin{equation}\label{sands'}
    s'=d+1-s.
\end{equation}

\begin{definition}[$s'$-parameter moment BRK type sets]\label{MomentCurveBRK}
    Assume $d\geq 3$. For $s'\in \{1,2,...,d\}$, a compact set $S_{s'}\subseteq \R^d$ is called a \emph{$s'$-parameter moment BRK type sets} if for all $(x_{s+1},...,x_{d-1},x_d,r)\in [-1/4,1/4]^{s'-1}\times [1/2,2]\subseteq \R^{s'}$, there is $(x_1,...,x_{s})\in \R^{s}$ such that the (truncated) moment curve 
\begin{equation}\label{mcset}
   H(\mathbf x,r):= \{\mathbf x+r\gamma(t):t\in [-1,1]\}\subseteq S_{s'},
\end{equation}
where $\mathbf x=(x_1,...,x_d)$.
\end{definition}

In the following parts of the note,    
\begin{enumerate}[(i)]
    \item we will denote the range of each coordinate of $\mathbf x$ and $r$ by \begin{equation}\label{intervals}
    I_1:=[-1/4,1/4],\ I_2:=[1/2,2].
\end{equation}
and 
$$\overline{\mathbf x}_{s}=(x_1,...,x_s),\quad \underline{\mathbf x}_{s'-1}=(x_{s+1},...,x_d).$$
When $s'$ is fixed, we will abbreviate these as $\overline{\mathbf x}$ and $\underline{\mathbf x}$.
\item Denote the isotropic $\delta$-neighborhood of the moment curve $H(\mathbf x,r)$ as $H_\delta(\mathbf x,r)$,  More precisely,
    \begin{equation*}
        H_\delta(\mathbf x,r):=\bigcup_{\mathbf y\in H(\mathbf x,r)}B(\mathbf y,\delta),
    \end{equation*}
where $\mathbf y=(y_1,...,y_d)\in\R^d$ and $B(\mathbf y,\delta
)$ is the ball with center $\mathbf y$ and radius $\delta$. It is elementary to see that when $r\in [1/2,2]$, for all $\mathbf x\in\R^d,$
\begin{equation*}
    \mathcal L^d(  H_\delta(\mathbf x,r))\approx \delta
    ^{d-1}.
\end{equation*}
\item All dimensions or $\dim(\cdot)$ refer to Hausdorff dimension.
\item As conventions, $A \lesssim B$ or $B\gtrsim A$ represents ``$\exists C>0$ (independent of $\delta$ but may depend on other irrelevant parameter such as $d$), such that $A\leq CB$". If $C$ depends on some irrelevant parameter $\tau$, we may also denote $A\lesssim C_\tau B$ as $A\lesssim_\tau B.$
\end{enumerate}

To state the dimension estimates for the $s'$-parameter moment BRK type sets, we introduce the following definition.

\begin{definition}[$s$-parameter maximal function associated to moment curves]\label{def} Fix $\delta>0$. For a Schwartz function $f:\R^d\to \mathbb C$, $d\geq3,$ define the \textit{$s$-parameter maximal function associated to moment curves} $\M^{s}_\delta f  :I_1^{s'-1}\times I_2\to \mathbb R_+$ as
\begin{equation}\label{maxopt1}
     \M^{s}_\delta f(\underline{\mathbf x}_{s'-1},r):=
         \sup_{\mathbf x\in\R^s}\frac{1}{\mathcal L^d( H_\delta(\mathbf x,r))}\int_{H_\delta(\mathbf x,r)}|f(\mathbf y)|d\mathbf y,s=1,2,...,d
          % \mathcal \sup_{\mathbf x\in\R^d}\int_{H_(\mathbf x,r)}|f(\mathbf y)|d\ell(\mathbf y),&s=1.
\end{equation}
    where $\mathcal L^d$ is the $d$-dimensional Lebesgue measure and $\ell$ is the arc length measure. 
\end{definition}
% \begin{remark}
%     The standard notation of $\mathcal M_\delta^1$ should be $\mathcal M^1$ since we will see that the operator is independent of $\delta$ and bounded. Slightly abusing the notations, we still denote it as $\mathcal M_\delta^1$ for convenience.
% \end{remark}
Recall our notation in \eqref{sands'}. In the parameter space $\R^{d+1}=\R^s\times \R^{s'}$, $s$ is the dimension or number of parameters we take supremum over. $s'$ is the dimension or number of parameters left, which is also the number of variables of the maximal functions in \eqref{maxopt1}.

By applying Wolff's circular maximal estimate in \cite{Wolff1997} and sharp (exponent) local smoothing estimate by Ko, Lee, and Oh \cite{Ko_Lee_Oh_2023}, the following maximal estimates can be obtained.

\begin{theorem}[maximal estimates] \label{MaxEst}
% \begin{enumerate}
     Fix $d\geq 3 $ and $ \eps>0$, then for all $ p\geq p_d,$ there exists $ C_\eps>0$ depending on all the aforementioned quantities and $s$, such that
\begin{equation}\label{max1}
    \norm{\M^s_\delta f}_{L^p(I_1^{s'-1}\times I_2)}\leq C_\eps \delta^{-\alpha(s)}\norm{f}_{L^p{(\R^d)}}.
\end{equation}
Here,
\begin{equation*}
    p_d=\begin{cases}
        3,&\text{if }s=d,\\
        4d-2,&\text{if } s=1,...,d-1,
    \end{cases}
\end{equation*}
and
\begin{equation}\label{ExponentOfTheEstimate}\alpha(s)=
    \begin{cases}
        \eps+\frac{s-2}{p},&s=2,...,d,\\
        0,&s=1.
    \end{cases}
\end{equation}
When $s=1,$ $C_\eps$ is an absolute number independent of $\eps$.
\end{theorem}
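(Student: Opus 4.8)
The plan is to linearize $\M^s_\delta$ into a smooth average over dilates of $\gamma$, to remove the supremum by a Sobolev embedding in the free translation variables, and to reduce matters to a sharp $L^p$-smoothing estimate for such curve averages --- which Ko--Lee--Oh supply for $1\le s\le d-1$, while the extremal case $s=d$ I would treat separately via Wolff's circular maximal estimate. First I would linearize: since $r\in I_2$ keeps $t\mapsto r\gamma(t)$ uniformly biregular on $[-1,1]$, every point of $H_\delta(\mathbf x,r)$ lies within $\delta$ of $\mathbf x+r\gamma(t)$ for a set of $t$ of measure $\gtrsim\delta$, so with $\phi_\delta$ an $L^1$-normalized mollifier at scale $\delta$, $\chi\in C_c^\infty$ equal to $1$ on $[-1,1]$, and $g:=|f|*\phi_\delta\ge 0$,
\[
\frac{1}{\mathcal L^d(H_\delta(\mathbf x,r))}\int_{H_\delta(\mathbf x,r)}|f|\,d\mathbf y\ \lesssim\ \int_{\R}g\big(\mathbf x+r\gamma(t)\big)\chi(t)\,dt\ =:\ \mathcal A_r g(\mathbf x).
\]
Here $\norm{g}_p\le\norm{f}_p$ and $\widehat g$, hence the $\mathbf x$-Fourier transform of $\mathcal A_r g$, is supported in $\{|\xi|\lesssim\delta^{-1}\}$; thus it suffices to bound $\norm{\sup_{\overline{\mathbf x}\in\R^s}\mathcal A_r g}_{L^p(I_1^{s'-1}\times I_2)}$.

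Next I would remove the supremum over $\overline{\mathbf x}\in\R^s$. Applying the fractional Sobolev embedding $W^{s/p+\eps,p}(\R^s)\hookrightarrow L^\infty(\R^s)$ in $\overline{\mathbf x}$, then Minkowski's inequality to bring the $\overline{\mathbf x}$-integration inside, and then Bernstein's inequality (legitimate because $\mathcal A_r g$ has $\overline{\mathbf x}$-frequency $\lesssim\delta^{-1}$), yields
\[
\norm{\sup_{\overline{\mathbf x}\in\R^s}\mathcal A_r g}_{L^p(I_1^{s'-1}\times I_2)}\ \lesssim\ \delta^{-s/p-\eps}\,\norm{\mathcal A_r g}_{L^p(\R^d\times I_2)}.
\]
The operator $g\mapsto\big((\mathbf x,r)\mapsto\mathcal A_r g(\mathbf x)\big)$ is the averaging operator along dilates of the moment curve, and since $\gamma$ is nondegenerate I would invoke the sharp $L^p$-Sobolev (local smoothing) estimate of Ko--Lee--Oh: for $p\ge 4d-2$ and $g$ at frequency $\delta^{-1}$ it provides the gain $\norm{\mathcal A_r g}_{L^p(\R^d\times I_2)}\lesssim\delta^{2/p}\norm{g}_p$, which is exactly the $2/p$ of smoothing needed. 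Combining the two displays gives $\alpha(s)=\eps+(s-2)/p$ at $p=4d-2$; the range $p>4d-2$ then follows by interpolation with the trivial $L^\infty\to L^\infty$ bound, and in the case $s=1$ I would replace the Sobolev step by the one-dimensional Hardy--Littlewood maximal operator acting in $x_1$, which bounds $\sup_{x_1}\mathcal A_r g$ in $L^p$ with no $\delta$-loss and a constant independent of $\eps$, giving $\alpha(1)=0$.

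For $s=d$ the norm runs only over $r\in I_2$, which is what allows $p$ to drop to $3$. Here I would integrate the inner average along $\gamma$ in the $d-2$ coordinate directions $y_3,\dots,y_d$ over $\delta$-windows, which realizes $\delta^{-(d-1)}\int_{H_\delta(\mathbf x,r)}|f|$ as a two-dimensional $\delta$-neighborhood average in $(y_1,y_2)$ --- where the first two coordinates of $\gamma$ trace a curve of nonvanishing curvature --- of a function built from $f$ and the frozen parameters $x_3,\dots,x_d$; Sobolev-embedding those $d-2$ parameters costs $\delta^{-(d-2)/p-\eps}$, and the planar circular-type (Kakeya/Nikodym) maximal operator left over is bounded for $p>2$ by Wolff's circular maximal estimate. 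This gives $\alpha(d)=\eps+(d-2)/p$ for all $p\ge 3$.

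The hardest part will be the sharp curve-averaging estimate invoked in the second paragraph: its proof is the deep input, furnished by Ko--Lee--Oh for nondegenerate curves (and by Wolff for the planar reduction in the case $s=d$), and it is precisely the range of $p$ in which the $2/p$-gain holds that pins the thresholds $p_d=4d-2$ and $p_d=3$. The remaining ingredients --- the comparison between the anisotropic neighbourhood $H_\delta$ and the isotropic mollification $\mathcal A_r$, the discretization needed to replace $\sup_{\overline{\mathbf x}}$ over $\R^s$ by a supremum over a $\delta$-net so that Bernstein's inequality and measurability are legitimate, the uniformity of the constants in $s$, and the bookkeeping of the $\eps$-losses --- I expect to be routine.
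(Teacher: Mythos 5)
Your overall strategy is the same as the paper's: for $s\le d-1$, smooth the average, remove the supremum in $\overline{\mathbf x}$ by Bernstein/Sobolev, and feed the result into the Ko--Lee--Oh local smoothing estimate; for $s=d$, project to the first two coordinates and invoke a Wolff-type planar maximal bound. However, as written the central chain for $s\le d-1$ has a genuine scale mismatch. Your first display removes the supremum at the \emph{global} cost $\delta^{-s/p-\eps}$ (using only that the frequency support lies in the ball $\{|\xi|\lesssim\delta^{-1}\}$), while your second display claims $\norm{\mathcal A_r g}_{L^p(\R^d\times I_2)}\lesssim\delta^{2/p}\norm{g}_p$ for the \emph{whole} function $g$. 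The latter is false: $g=|f|*\phi_\delta$ is band-limited to a ball, not to the annulus $|\xi|\sim\delta^{-1}$, and for a unit-scale bump one has $\norm{\mathcal A_r g}_p\sim\norm{g}_p$, with no $\delta^{2/p}$ gain. The two steps can only be chained after a Littlewood--Paley decomposition in which \emph{both} the Bernstein step (cost $2^{sk/p}$) and the local smoothing gain ($2^{(-2/p+\eps)k}$) are applied to the same dyadic piece at frequency $2^k\lesssim\delta^{-1}$, and the resulting factors $2^{((s-2)/p+\eps)k}$ are summed over $k\lesssim\log(1/\delta)$ (fine for $s\ge2$, and summable with a $\delta$-independent constant for $s=1$). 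This interleaving is exactly how the paper's Section 6 proceeds (the paper also splits off a low-frequency piece and a non-stationary-phase piece $|\xi_1|\gg|\tilde{\boldsymbol\xi}|$ because its thickening is anisotropic; your isotropic mollification avoids that, provided you verify the symbol hypotheses of Ko--Lee--Oh on each full annulus). Incidentally, once you argue scale by scale, your separate Hardy--Littlewood device for $s=1$ is both unjustified (the sup over $x_1$ is not a translation-averaging in a single variable, since shifting $x_1$ deforms the curve in the remaining coordinates) and unnecessary.

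Two smaller points on the $s=d$ case. Wolff's theorem is stated for circles and holds for $p\ge 3$, not $p>2$ (the $p>2$ range belongs to Bourgain's circular maximal theorem, a different operator); your final claim $p\ge3$ is right, but the intermediate assertion is not. More substantively, the planar curves arising from the projection are parabolas, so one cannot quote Wolff's circular estimate verbatim; one needs its cinematic-curvature generalization (Schlag--Zahl, Pramanik--Yang--Zahl), which the paper cites explicitly and which covers the family of parabolas parametrized by vertex and dilation. With the Littlewood--Paley interleaving supplied and these two citations corrected, your argument coincides with the paper's proof.
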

Note that when $s=d$, the result $p_d= 3$ is better than the general results obtained from local smoothing $p_d\geq 4d-2$. 
\begin{remark}
    When $s=1$, since the operator norm is independent of $\delta,$ by a standard limiting argument, one can deduce 
    \begin{equation*}
        \norm{\mathcal M^1f}_L^p\leq C\norm{f}_p,
    \end{equation*}
    and replace $\mathcal M^1_\delta$ by $\mathcal M^1$, where 
    \begin{equation*}
        \mathcal M^1f:=\sup_{\mathbf x\in\R^d}\int_{H_(\mathbf x,r)}|f(\mathbf y)|d\ell(\mathbf y).
    \end{equation*}
    To unify notations, we still use $\mathcal M^1_\delta.$
\end{remark}

When $s=d$, the estimate \eqref{max1} is sharp in the following sense:
\begin{enumerate}
    \item (sharpness of the exponent of $\delta$) If $p\geq  3$,  $\not\exists \beta<\frac{d-2}{p}$ such that the following holds
    \begin{equation*}
          \norm{\M^d_\delta f}_{L^p([1/2,2])}\leq C_\eps \delta^{-(\eps+\beta)}\norm{f}_{L^p{(\R^d)}}.
    \end{equation*} 
    \item (sharpness of the range of $p$) If $p<3$, then an estimate with the form $s=d$ in \eqref{max1} cannot hold.
\end{enumerate}

By a standard ``maximal estimate to dimension estimate" argument (see Section \ref{Fthm2toThm1}), the following corollary is straightforward.  
Recall the notation in Definition \ref{MomentCurveBRK}.

\begin{corollary}[dimension estimates]\label{dimest}For $d\geq 3$ and $s'\in \{1,2,...,d\}$, 
\begin{equation}\label{DimEstInThm2}
    \dim(S_{s'})\geq \min\{s'+1,d\}.
\end{equation}
\end{corollary}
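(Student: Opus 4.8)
The plan is to run the standard ``$L^p$ maximal estimate $\Rightarrow$ Hausdorff dimension'' scheme of Section~\ref{Fthm2toThm1}: I would test Theorem~\ref{MaxEst} against indicator functions of $\delta$-neighbourhoods of a hypothetical efficient cover of $S_{s'}$, and play the resulting lower bound on covering numbers against the assumption that $\dim(S_{s'})$ is small. Fix $s'\in\{1,\dots,d\}$, set $s:=d+1-s'$ and $p:=p_d$ (so $p=3$ if $s=d$ and $p=4d-2$ otherwise), put $P:=I_1^{s'-1}\times I_2$ and $c_0:=\mathcal{L}^{s'}(P)>0$, and write $\beta:=\min\{s'+1,d\}$. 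Assume for contradiction that $\dim(S_{s'})<\beta$ and pick $\alpha'$ with $\dim(S_{s'})<\alpha'<\beta$. Then $\mathcal{H}^{\alpha'}(S_{s'})=0$, so for every $\eta>0$ there is a finite cover $S_{s'}\subseteq\bigcup_j B(z_j,r_j)$ by balls of dyadic radii $r_j=2^{-k_j}$ with $k_j\geq k_0(\eta)$, $k_0(\eta)\to\infty$ as $\eta\to0$, and $\sum_j r_j^{\alpha'}<\eta$. Writing $N_k$ for the number of $j$ with $r_j=2^{-k}$ and $E_k:=\bigcup_{j:\,r_j=2^{-k}}B(z_j,10\cdot 2^{-k})$, we have $\mathcal{L}^d(E_k)\lesssim N_k 2^{-kd}$ and $\sum_{k\geq k_0}N_k 2^{-k\alpha'}\lesssim\eta$.

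\emph{Capturing each parameter at a single scale.} Fix $(\underline{\mathbf x},r)\in P$ and choose $\mathbf x$ with $H(\mathbf x,r)\subseteq S_{s'}$. Since $r\geq 1/2$ and $|\gamma'|\geq 1$ on $[-1,1]$, the arclength of $H(\mathbf x,r)$ is $\gtrsim 1$; as it is covered by the $B(z_j,r_j)$, subadditivity of arclength measure gives $\sum_{k\geq k_0}L_k\gtrsim 1$, where $L_k$ is the arclength of $H(\mathbf x,r)\cap\bigcup_{r_j=2^{-k}}B(z_j,r_j)$. Since $\sum_k k^{-2}<\infty$, some scale $k\geq k_0$ (depending on $(\underline{\mathbf x},r)$) has $L_k\gtrsim k^{-2}$; for $\eta$ small, $k_0$ is large enough that $k^{-2}\geq 2^{-k}$, so the corresponding sub-arc of $H(\mathbf x,r)$ has $2^{-k}$-neighbourhood of $\mathcal{L}^d$-measure $\gtrsim L_k\,2^{-k(d-1)}\gtrsim k^{-2}2^{-k(d-1)}$, and that neighbourhood lies in $E_k\cap H_{2^{-k}}(\mathbf x,r)$. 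With $\mathcal{L}^d(H_\delta(\mathbf x,r))\approx\delta^{d-1}$ and the definition of $\M^s_\delta$, this gives $\M^s_{2^{-k}}\mathbbm{1}_{E_k}(\underline{\mathbf x},r)\gtrsim k^{-2}$. Hence, setting $P_k:=\{(\underline{\mathbf x},r)\in P:\ \M^s_{2^{-k}}\mathbbm{1}_{E_k}(\underline{\mathbf x},r)\geq c\,k^{-2}\}$ for a suitable absolute $c>0$, we get $P\subseteq\bigcup_{k\geq k_0}P_k$. (Phrasing this through the maximal function sidesteps measurability of the selection $(\underline{\mathbf x},r)\mapsto\mathbf x$; and one first replaces $\mathbbm{1}_{E_k}$ by a Schwartz approximant so Theorem~\ref{MaxEst} applies, which is harmless since $\M^s_\delta$ is positive and sublinear.)

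\emph{Summing the single-scale estimates.} Applying Theorem~\ref{MaxEst} with $\delta=2^{-k}$ to $\mathbbm{1}_{E_k}$, and using $c\,k^{-2}\mathbbm{1}_{P_k}\leq\M^s_{2^{-k}}\mathbbm{1}_{E_k}$ on $P$ together with $\norm{\mathbbm{1}_{E_k}}_{L^p(\R^d)}=\mathcal{L}^d(E_k)^{1/p}\lesssim(N_k 2^{-kd})^{1/p}$, one obtains
\[
\mathcal{L}^{s'}(P_k)\lesssim_{\eps}\ k^{2p}\,2^{k p\,\alpha(s)}\,N_k\,2^{-kd}.
\]
A direct computation from $s=d+1-s'$ gives $p\,\alpha(s)-d=p\eps-(s'+1)$ when $s\geq 2$ and $p\,\alpha(s)-d=-d$ when $s=1$; in both cases $d-p\,\alpha(s)>\alpha'$ (using $\alpha'<s'+1$ for $s\geq2$ and $\alpha'<d$ for $s=1$), provided $\eps$ is chosen small enough when $s\geq2$. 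Writing $2^{k(p\alpha(s)-d)}=2^{-k\alpha'}\,2^{-k(d-p\alpha(s)-\alpha')}$ with $d-p\alpha(s)-\alpha'>0$, and using $\sup_{k}k^{2p}2^{-k(d-p\alpha(s)-\alpha')}<\infty$, summing over $k\geq k_0$ yields
\[
c_0=\mathcal{L}^{s'}(P)\leq\sum_{k\geq k_0}\mathcal{L}^{s'}(P_k)\lesssim_{\eps,p}\sum_{k\geq k_0}N_k\,2^{-k\alpha'}\lesssim\eta.
\]
Choosing $\eta$ small contradicts $c_0>0$, so $\dim(S_{s'})\geq\beta=\min\{s'+1,d\}$; in particular for $s=1$, i.e.\ $s'=d$, this gives full dimension $\dim(S_d)\geq d$.

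\emph{Main obstacle.} The delicate point is the capturing step: the scale at which a given parameter curve $H(\mathbf x,r)$ is efficiently covered depends on $(\underline{\mathbf x},r)$, so one cannot simply reduce to a single scale. The fix is to sum the single-scale consequences of Theorem~\ref{MaxEst} against the weights $k^{-2}$; the resulting polynomial loss $k^{2p}$ is absorbed by the geometric gain $2^{-k(d-p\alpha(s)-\alpha')}$, which is available precisely because $\alpha'<\beta$. The remaining ingredients --- the arclength lower bound for $H(\mathbf x,r)$, the $\mathcal{L}^d$-volume of a $\delta$-neighbourhood of a sub-arc, the reduction to finite dyadic covers from $\mathcal{H}^{\alpha'}(S_{s'})=0$, and the density argument replacing indicators by Schwartz functions --- are routine.
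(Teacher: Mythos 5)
Your proposal is correct and follows essentially the same route as the paper's Section \ref{Fthm2toThm1}: dyadic decomposition of an arbitrary (efficient) cover, pigeonholing each parameter curve to a single scale with the $k^{-2}$ weights, testing Theorem \ref{MaxEst} at scale $\delta=2^{-k}$ against the indicator of the tenfold-enlarged balls, and summing, with the exponent bookkeeping $d-p\alpha(s)=s'+1-p\eps$ matching the paper's Proposition \ref{MaxToDim}(i). The only cosmetic differences are that you phrase it as a contradiction via $\mathcal H^{\alpha'}(S_{s'})=0$ rather than a direct lower bound on Hausdorff content, and you absorb the case $s'=d$ into the same covering argument (using $\alpha(1)=0$), whereas the paper handles it separately by showing $\mathcal L^d(S_d)>0$.
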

The dimension estimates \eqref{DimEstInThm2} are sharp in the sense that there exist $S_{s'}$ such that $\dim(S_{s'})=\min\{s'+1,d\}.$ All sharp examples will be presented in Section \ref{examplesforsharpness}. In particular, for the case $s'=d$, $S_d$ has positive Lebesgue measure, see Proposition \ref{MaxToDim}.  

% Details of sharpness will be discussed in section \ref{examplesforsharpness}.
\begin{remark}
     Heuristically, $S_{s'}$ is like an $s'$-dimensional union of $1$-dimensional moment curves. Therefore, it is reasonable to guess the resulting set has Hausdorff dimension $s'+1$. In fact, Corollary \ref{dimest} is special case of the results in Ham, Ko, Lee, and Oh \cite{HKLO}. There they generalized the local smoothing in \cite{Ko_Lee_Oh_2023} to fractal setting (similar argument can be found in \cite{Fractal}). This provides more flexibility in the choice of curve families, as \cite{HKLO} allows $s'$ to take arbitrary values in $(0,d]$ in an appropriate sense.
\end{remark}

\begin{remark}
   The methods applied to obtain maximal estimates for $s=d$ and $s=1,...,d-1$ are different. It may be interesting that for $\mathcal{M}^d_\delta$, the torsion of moment curve is not needed, even the result is sharp under both senses. For $\mathcal{M}^s_\delta,s=1,...,d-1$, the torsion is necessary to obtain the sharp local smoothing in \cite{Ko_Lee_Oh_2023}. 
   
   This is also reflected in the dimension estimates: It is the curvature in $s'$ transversal direction(s) that makes $S_{s'}$ having dimension at least $\min\{s'+1,d\}$. For $\mathcal M_\delta^d$ which corresponds to $\dim(S_1)\geq 1+1,$ only planar curvature is needed. Whereas, for $\mathcal M_\delta^s$ which corresponds to $\dim(S_{s'})\geq \min\{s'+1,d\},s'=2,...,d$, higher order curvature, such as torsion are needed.

   A slightly special case is $s'=d,$ where finite boundedness of the maximal operator holds. In the planar case, this is Bourgain's circular maximal theorem \cite{bourgain1984spherical}. The sharp range estimate in $\R^3$ is observed independently by \cite{ko2021maximalestimatesaveragesspace} and \cite{beltran2021sharplpboundshelical} and more recently, Gan, Maldague, and Oh \cite{gan2025sharplocalsmoothingestimates} proved this in $\R^4$ with the same type routine method described in this note.
\end{remark}

We summarize the results of this note with the following table \ref{Tab:SummaryOfMainResults}.
\begin{table}[h]
    \centering
    \begin{tabular}{cccccc}
    \toprule
        dim. of sup. &  maxl. op. & maxl. est. & sharp exp. & sharp $p_d$\\ \midrule
        $s=d$ & $\mathcal{M}^d_\delta:\R^{1}\to \R_+$ & \makecell{$\norm{\mathcal{M}^d_\delta}_{p\to p}=O(\delta^{-\eps-\frac{d-2}{p}})$,\\$p\geq 3$. {Section \ref{ProjectionArgument}}} &\makecell{Yes\\ {Section \ref{SharpExponent}}} & \makecell{Yes\\{Section \ref{SharpRange}}} \\
        $s\in\{1,...,d-1\}$ &  $\mathcal{M}^s_\delta:\R^{s'}\to \R_+$ & \makecell{$\norm{\mathcal{M}^s_\delta}_{p\to p}=O(\delta^{-\alpha(s)})$,\\$p\geq 4d-2$. {Section \ref{SobolevEmbedding}}} & \makecell{Yes\\ {Section \ref{SharpExponent}}} & \makecell{Possibly No\\{Section \ref{SharpRange}}} \\\bottomrule
    \end{tabular}
    \caption{summary of main results}  \label{Tab:SummaryOfMainResults}
\end{table}

We also summarize the methods that we applied and the structure of the note using Figure \ref{fig:summaryofmethodandstructure}. 
\begin{figure}[h]
    \centering

\tikzset{every picture/.style={line width=0.75pt}} %set default line width to 0.75pt        

\begin{tikzpicture}[x=0.75pt,y=0.75pt,yscale=-1,xscale=1]
%uncomment if require: \path (0,555); %set diagram left start at 0, and has height of 555

%Straight Lines [id:da07724182605968866] 
\draw    (306.66,112.4) -- (306.66,76.34)(309.66,112.4) -- (309.66,76.34) ;
\draw [shift={(308.16,69.34)}, rotate = 90] [color={rgb, 255:red, 0; green, 0; blue, 0 }  ][line width=0.75]    (10.93,-4.9) .. controls (6.95,-2.3) and (3.31,-0.67) .. (0,0) .. controls (3.31,0.67) and (6.95,2.3) .. (10.93,4.9)   ;
%Straight Lines [id:da26078992012816005] 
\draw    (190.43,248.13) -- (218.54,176.37)(193.22,249.22) -- (221.33,177.46) ;
\draw [shift={(222.49,170.4)}, rotate = 111.39] [color={rgb, 255:red, 0; green, 0; blue, 0 }  ][line width=0.75]    (10.93,-4.9) .. controls (6.95,-2.3) and (3.31,-0.67) .. (0,0) .. controls (3.31,0.67) and (6.95,2.3) .. (10.93,4.9)   ;
%Straight Lines [id:da4677625932037197] 
\draw    (438.48,249.39) -- (404.9,181.01)(441.17,248.07) -- (407.59,179.69) ;
\draw [shift={(403.16,174.07)}, rotate = 63.85] [color={rgb, 255:red, 0; green, 0; blue, 0 }  ][line width=0.75]    (10.93,-4.9) .. controls (6.95,-2.3) and (3.31,-0.67) .. (0,0) .. controls (3.31,0.67) and (6.95,2.3) .. (10.93,4.9)   ;

% Text Node
\draw (243,47) node [anchor=north west][inner sep=0.75pt]   [align=left] {dimension estimates dimension estimates Corollary \textbackslash ref\{dimest\}};
% Text Node
\draw (241.33,125) node [anchor=north west][inner sep=0.75pt]   [align=left] {maximal estimates for };
% Text Node
\draw (322,74.33) node [anchor=north west][inner sep=0.75pt]   [align=left] {{\footnotesize standard argument}\\{\footnotesize Section \textbackslash ref\{Fthm2toThm1\} }};
% Text Node
\draw (364,199) node [anchor=north west][inner sep=0.75pt]  [font=\footnotesize] [align=left] {``Sobolev embbeding"\\Section \ref{SobolevEmbedding}};
% Text Node
\draw (149,199) node [anchor=north west][inner sep=0.75pt]  [font=\footnotesize] [align=left] {projection argument\\Section \ref{ProjectionArgument}};
% Text Node
\draw (128,254.33) node [anchor=north west][inner sep=0.75pt]   [align=left] {Wollf's circular \\maximal Theorem};
% Text Node
\draw (379.67,253.67) node [anchor=north west][inner sep=0.75pt]   [align=left] {Local smoothing \\for average over curves};
% Text Node
\draw (206.33,142.73) node [anchor=north west][inner sep=0.75pt]    {$s=d$};
% Text Node
\draw (397.33,123.73) node [anchor=north west][inner sep=0.75pt]    {$\mathcal{M}_{\delta }^{s}$};
% Text Node
\draw (353.33,142.73) node [anchor=north west][inner sep=0.75pt]    {$s=1,...,d-1$};

\end{tikzpicture}

    \caption{summary of methods and structure of the note}
    \label{fig:summaryofmethodandstructure}
\end{figure}
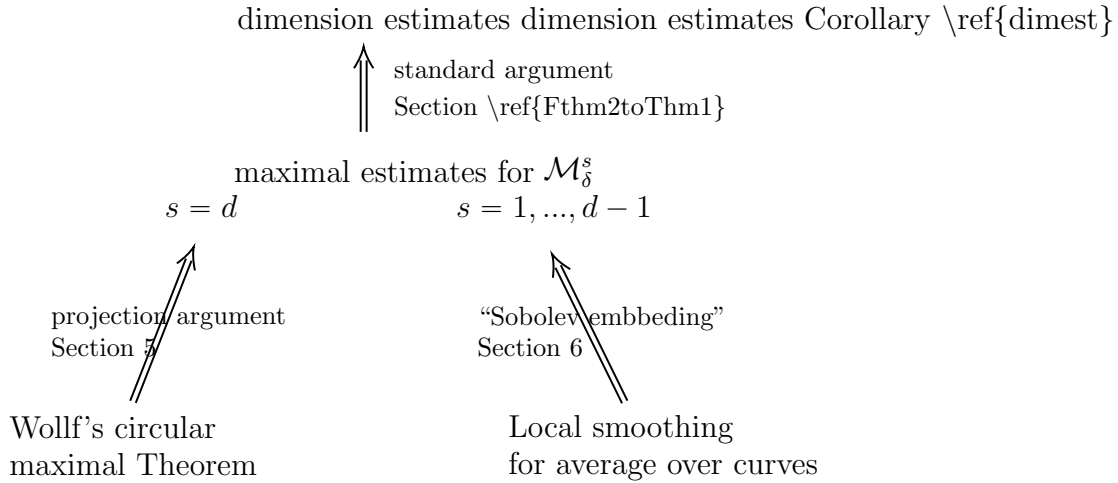
\noindent To discuss the sharpness, in Section \ref{sectangency}, some preliminary geometric facts are provided.

\begin{remark}
    In the planar case, the argument we applied to Theorem \ref{MaxEst}, $s=1,...,d-1$ still works. However, this method relying on local smoothing \cite{Ko_Lee_Oh_2023} cannot provide sharp range maximal estimate even with the sharp range local smoothing by Guth, Wang, and Zhang \cite{GuthWangZhang}.
    
    % dimension estimate in Corollary \ref{dimest} still holds by the same method. It also relies on a sharp exponent of the maximal estimate in Theorem \ref{MaxEst}, which can be provided by the local smoothing estimate with sharp exponent \cite{Ko_Lee_Oh_2023}. However, it seems even the sharp local smoothing estimate \cite{GuthWangZhang} cannot provide the sharp range of the maximal estimate.  
\end{remark}

% \begin{table}
%     \centering
%     \begin{tabular}{c|c|c}
%       \# of variables taken\\ supremum over  & $d$ & $s\in \{1,2,...,d-1\}$ \\
%       \midrule
%       \# of variables left   &  & \\
%       \midrule
%        maximal operator  &  & \\
%        \midrule
%        maximal estimate  &  & \\
%        \midrule
%        sharpness of the exponent  &  & \\
%        \midrule
%         sharpness in the range of $p$ &  & \\
%     \end{tabular}
%     \caption{Caption}
%     \label{tab:my_label}
% \end{table}

\section{From Theorem \ref{MaxEst} to Corollary \ref{dimest}}\label{Fthm2toThm1}

We first prove the following standard result passing from maximal estimates, Theorem \ref{MaxEst} to dimension estimates of BRK type sets, Corollary \ref{dimest}. A similar proof can be found in \cite[Lemma 11.9]{WolffBook}.

\begin{proposition}\label{MaxToDim}
\begin{enumerate}
    \item[(i)] $s\in \{2,...,d\}$. Assume for some finite $p$, there is $\alpha>0$ such that   
    \begin{equation}\label{bdd1}
    \norm{\M_\delta^s f}_{L^p(I_1^{s'-1}\times I_2)}\lesssim \delta^{-\alpha}\norm{f}_{L^p(\R^d)},
\end{equation}
then 
\begin{equation*}
    \dim(S_{s'})\geq d-p\alpha.
\end{equation*}

\item[(ii)]Assume for some finite $p$, there is $\alpha<0$ \begin{equation}\label{bdd2}
    \norm{\M_\delta^1 f}_{L^p(I_1^{d-1}\times I_2)}\lesssim \delta^{-\alpha}\norm{f}_{L^p(\R^d)},
\end{equation}
then 
\begin{equation*}
    \L^d(S_d)>0\Rightarrow \dim(S_d)=d.
\end{equation*}
\end{enumerate}
\end{proposition}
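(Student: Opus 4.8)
The plan is to run the standard covering argument that converts an $L^p$ maximal inequality into a lower bound on Hausdorff dimension. Fix $s' \in \{2,\dots,d\}$ and suppose $\dim(S_{s'}) < d - p\alpha$; we will derive a contradiction with \eqref{bdd1}. Choose $\beta$ with $\dim(S_{s'}) < \beta < d - p\alpha$. By the definition of Hausdorff dimension, for every $\eta > 0$ there is a covering of $S_{s'}$ by balls $\{B(z_j, r_j)\}$ with $\sum_j r_j^{\beta} < \eta$. After a standard dyadic pigeonholing, for each dyadic scale $\delta = 2^{-k}$ let $\mathcal{B}_\delta$ be the subfamily of balls with radius $\approx \delta$; then $S_{s'} \subseteq \bigcup_\delta \bigcup_{B \in \mathcal{B}_\delta} B$ and $\#\mathcal{B}_\delta \cdot \delta^\beta \lesssim \eta$ for each $\delta$. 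The idea is to test the maximal inequality against $f_\delta = \mathbbm{1}_{E_\delta}$, where $E_\delta = \bigcup_{B \in \mathcal{B}_\delta} 2B$ is the union of (slightly enlarged) balls at scale $\delta$; note $\L^d(E_\delta) \lesssim \#\mathcal{B}_\delta \cdot \delta^d \lesssim \eta \, \delta^{d-\beta}$, so $\|f_\delta\|_{L^p(\R^d)}^p \lesssim \eta\, \delta^{d - \beta}$.

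The key geometric point is the lower bound on the maximal function. For each parameter $(\underline{\mathbf x}_{s'-1}, r) \in I_1^{s'-1} \times I_2$, the curve $H(\mathbf x, r)$ lies in $S_{s'}$ for the appropriate choice of $\overline{\mathbf x}_s$, hence $H_\delta(\mathbf x, r)$ is covered by the union over dyadic scales $\rho \leq \delta$ (and a bounded overshoot) of the balls in $\mathcal{B}_\rho$; but since $\L^d(H_\delta(\mathbf x,r)) \approx \delta^{d-1}$ and $H(\mathbf x, r)$ is a curve of length $\approx 1$, a fixed positive fraction of $H_\delta(\mathbf x, r)$ — say measure $\gtrsim \delta^{d-1}$ — must be covered by balls of some single dyadic scale $\rho_0 = \rho_0(\underline{\mathbf x}_{s'-1}, r) \leq \delta$; rescaling to that scale gives $\M^s_{\rho_0} f_{\rho_0}(\underline{\mathbf x}_{s'-1}, r) \gtrsim 1$. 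Summing the $L^p$ bound over the $O(\log(1/\delta))$ relevant scales (or pigeonholing to extract one good scale where a set of parameters of measure $\gtrsim 1/\log(1/\delta)$ is handled) and applying \eqref{bdd1} at that scale $\rho_0$ yields
\begin{equation*}
    \frac{1}{\log(1/\delta)} \lesssim \|\M^s_{\rho_0} f_{\rho_0}\|_{L^p(I_1^{s'-1}\times I_2)}^p \lesssim \rho_0^{-p\alpha} \|f_{\rho_0}\|_{L^p(\R^d)}^p \lesssim \eta\, \rho_0^{d - \beta - p\alpha}.
\end{equation*}
Since $d - \beta - p\alpha > 0$, the factor $\rho_0^{d-\beta-p\alpha}$ is bounded, so the right-hand side is $\lesssim \eta$; letting $\eta \to 0$ (and being slightly more careful so that the $\log$ loss is absorbed, e.g. by summing a convergent geometric-type series in $\rho$ rather than using the crude union bound) forces a contradiction. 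Hence $\dim(S_{s'}) \geq d - p\alpha$.

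For part (ii), the situation is that $\alpha < 0$, so \eqref{bdd2} is a genuine gain. Suppose $\L^d(S_d) > 0$. Take $f = \mathbbm{1}_{S_d}$, which is in $L^p(\R^d)$ with $\|f\|_p^p = \L^d(S_d)$ finite. For every $r \in I_2$ there is $\mathbf x \in \R^d$ with $H(\mathbf x, r) \subseteq S_d$, so $H_\delta(\mathbf x, r) \subseteq S_d^{\delta}$ (the $\delta$-neighbourhood), and thus, taking $f = \mathbbm{1}_{S_d^{\delta}}$ instead, $\M^1_\delta f(r) \gtrsim 1$ for all $r \in I_2$, giving $\|\M^1_\delta f\|_{L^p(I_2)}^p \gtrsim 1$. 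Combined with \eqref{bdd2} this gives $1 \lesssim \delta^{-p\alpha} \L^d(S_d^\delta)$, i.e. $\L^d(S_d^\delta) \gtrsim \delta^{p\alpha} \to \infty$ as $\delta \to 0$ (since $p\alpha < 0$) — wait, that is automatic; instead one uses that $\L^d(S_d^\delta) \to \L^d(S_d)$ to conclude only $\L^d(S_d) \gtrsim \liminf \delta^{p\alpha}$, which is vacuous. The correct route: if $\dim(S_d) < d$ then (running the part (i) argument, which does not use the sign of $\alpha$ but only $d - \beta - p\alpha > 0$, valid here for $\beta$ close to $\dim(S_d) < d$ and $\alpha \le 0$) we would get a contradiction, so $\dim(S_d) = d$; the Lebesgue hypothesis is then not even needed for the dimension conclusion, but the stated implication follows a fortiori. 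I would present part (ii) as this special case of part (i).

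The main obstacle I anticipate is bookkeeping the logarithmic loss from decomposing $H_\delta(\mathbf x, r)$ across the $O(\log 1/\delta)$ dyadic scales of the Hausdorff covering in a parameter-uniform way: one must ensure that a scale $\rho_0$ capturing a fixed fraction of the curve can be chosen measurably in $(\underline{\mathbf x}_{s'-1}, r)$, then pigeonhole in the (discrete) scale to find one $\rho_0$ good for a large-measure set of parameters, so that \eqref{bdd1} can be applied at that single scale. This is routine but needs care; everything else is a direct substitution of test functions and the volume bound $\L^d(H_\delta(\mathbf x,r)) \approx \delta^{d-1}$.
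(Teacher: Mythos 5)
Your strategy for item (i) is the paper's strategy in contrapositive form: cover $S_{s'}$, sort the balls into dyadic scales, test \eqref{bdd1} against indicators of unions of (slightly enlarged) comparable balls, lower-bound the maximal function because each parameter's curve lies in $S_{s'}$, and compare. The genuine gap is the pigeonholing step. It is not true that ``a fixed positive fraction of $H_\delta(\mathbf x,r)$ must be covered by balls of a single dyadic scale,'' and there are not $O(\log(1/\delta))$ relevant scales: the covering balls have no lower bound on their radii, so infinitely many scales can contribute and each may capture an arbitrarily small share of the curve. The paper fixes this with a weighted pigeonhole: since $\sum_k \tfrac{1}{100k^2}<\tfrac1{10}\leq \mathcal L^1(H)$, for every parameter there is a scale $k_0(\underline{\mathbf x},r)$ whose balls capture length at least $\tfrac{1}{100k_0^2}$; this only gives $\M^s_{2^{-k_0}}f_{k_0}\gtrsim k_0^{-2}$, not $\gtrsim 1$, and the resulting loss $k^{-2p}$ is then absorbed by the exponent room (in the paper, the factor $2^{k\eps}$ after proving $\mathcal H^{d-p\alpha-\eps}_{1/100}(S_{s'})\gtrsim_\eps 1$; in your contrapositive version, the convergent sum $\sum_k k^{2p}2^{-k(d-\beta-p\alpha)}$ coming from $d-\beta-p\alpha>0$). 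You gesture at exactly this repair (``summing a convergent geometric-type series''), but as written the central geometric claim is false and the bookkeeping that closes the argument --- partitioning the parameter set into the sets $R_k$ where scale $k$ wins, and carrying the $k^{-2p}$ loss through --- is missing. With that replacement your chain of inequalities goes through and is essentially the paper's proof.

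For item (ii), your final recommendation (run the part (i) machinery with $\alpha\le 0$) only yields $\dim(S_d)=d$, which misses the intended content: the implication ``$\L^d(S_d)>0\Rightarrow\dim(S_d)=d$'' is trivial by itself, and what the paper actually extracts (and advertises after Corollary \ref{dimest}) is that the $\delta$-uniform bound forces $\L^d(S_d)>0$. You had that argument in hand and discarded it: taking $f=\chi_{S_d^{\delta}}$ (or $\chi_{S_d}$ with the limiting remark for $\M^1$), every $r\in I_2$ gives $\M^1_\delta f\gtrsim 1$, hence $1\lesssim \delta^{-\alpha}\L^d(S_d^{\delta})^{1/p}$; since $S_d$ is compact, $\L^d(S_d^{\delta})\to \L^d(S_d)$ as $\delta\to0$, and for $\alpha\le 0$ the factor $\delta^{-\alpha}$ does not blow up, so $\L^d(S_d)\gtrsim 1$. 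Your dismissal of this as ``vacuous'' is an error: $\liminf_{\delta\to0}\delta^{p\alpha}$ equals $1$ when $\alpha=0$ (the case actually supplied by Theorem \ref{MaxEst}, giving positive measure), and equals $+\infty$ when $\alpha<0$ (which only shows the hypothesis \eqref{bdd2} with strictly negative $\alpha$ is never satisfiable --- a quirk of the statement, not an obstruction to the proof). So part (ii) should be proved by the test-function computation you abandoned, not folded into part (i).
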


To deduce Corollary \ref{dimest}, for $s=2,...,d,$ by item $(i)$, 
\begin{equation*}
    \dim(S_{s'})\geq d-p\left(\eps+\frac{s-2}p\right)=d-s+2-p\eps\to s'+1,\text{ as } \eps\to 0.
\end{equation*}
When $s=1$, the conclusion is derived directly from item $(ii)$.
% When $s=1,$ 
% \begin{equation*}
%     \dim(S_2)\geq d-p\eps\to d,\text{ when }\eps\to 0.
% \end{equation*}

Before proving Proposition \ref{MaxToDim}, we first introduce the prerequisites of Hausdorff dimension.
\begin{definition}[Hausdorff measure]
    For a nonnegative real number $s\in(0,d]$, the $s$-dimensional \emph{Hausdorff measure} of a set \( E \subset \mathbb{R}^d \) 
    % for a non-negative real number \( s \) 
    is defined as:

\[
\mathcal{H}^s(E) = \lim_{\delta \to 0} \inf \left\{ \sum_{i=1}^\infty (\text{diam}(U_i))^s : E \subset \bigcup_{i=1}^\infty U_i \text{ and } \text{diam}(U_i) < \delta \right\}
\]

where $U_i$ is an open ball and \( \text{diam}(U_i) \) denotes its diameter. 
\end{definition}
For some fixed $\delta_0,$ we also use $\mathcal H_{\delta_0}^s$ to denote 
\begin{equation}\label{ForHausdorffContnent}
\mathcal{H}^s_{\delta_0}(E) = \inf \left\{ \sum_{i=1}^\infty (\text{diam}(U_i))^s : E \subset \bigcup_{i=1}^\infty U_i \text{ and } \text{diam}(U_i) < \delta_0 \right\}.
\end{equation}
It is elementary to see that if we treat $\mathcal{H}^s_{\delta_0}(E)$ as a function of $\delta_0$, then it is a monotonically decreasing function. Therefore, $\mathcal{H}^s(E)\geq \mathcal{H}^s_{\delta_0}(E),\forall \delta_0>0.$

Then we can introduce the Hausdorff dimension:

\begin{definition}[Hausdorff dimension]\label{HausDim}
    The \emph{Hausdorff dimension} of a set \(E\) in $\R^d$ is defined as:

\[
\dim_{\mathbb H}(E) = \inf \{s \geq 0 \mid \mathcal{H}^s(E) = 0\} = \sup \{s \geq 0 \mid \mathcal{H}^s(E) = \infty\}
\]

where \(\mathcal{H}^s(E)\) denotes the \(s\)-dimensional Hausdorff measure of \(E\). 
% We may abbreviate the notation as $\dim(E)$ since there is only one dimension considered in this note.
\end{definition}
\begin{remark}\label{HausdorffCotentAndDim}
    When $\delta_0=\infty$ in \eqref{ForHausdorffContnent}, $H^s_\infty$ is called \textit{$s$-dimensional Hausdorff content of $E$.} We will use the following fact
    \begin{equation*}
        H^s_\infty(E)<\infty\Rightarrow \dim_{\mathbb H}(E)\leq s.
    \end{equation*}
\end{remark}
Unless otherwise, we will always abbreviate \(\dim_{\mathbb H}(E)\) as $\dim(E)$ as before. Now we can start by proving Proposition \ref{MaxToDim}.

\begin{proof}We start with item $(i)$. Fix $s\in \{2,...,d\}$ and $\alpha>0$. Suppose $S_{s'}$ is a $s'$-parameter moment BRK type set. If we can prove that for all $0<\eps\ll 1$, such that 
    \begin{equation*}
        \mathcal{H}^{d-p\alpha-\eps}_{\frac{1}{100}}(S_{s'})\geq C_\eps,
    \end{equation*}
    then by the monotonicity, we have 
    \begin{equation*}
        \HH^{d-p\alpha-\eps}(S_{s'})\geq   \mathcal{H}^{d-p\alpha-\eps}_{\frac{1}{100}}(S_{s'})\geq  C_\eps,
    \end{equation*}
    and this implies $\dim(S_{s'})\geq d-p\alpha-\eps$ by Definition \ref{HausDim}. Then by a similar limiting argument, we can obtain $\dim(S_{s'})\geq d-p\alpha$. Note that in this resoning, we need $\alpha$ to be non-negative.

    Take an arbitrary countable covering of $S_{s'}.$ Denote it as $\bigcup_jB(\mathbf x_j,r_j)$, where $r_j<\frac{1}{100}$. It suffices to prove 
    \begin{equation*}
        \sum_jr_j^{d-p\alpha-\eps}\geq C_\eps .
        % \text{ (the lower bound).}
    \end{equation*}

    We first arrange the balls $B(\mathbf x_j,r_j)$ with respect to their sizes. By dyadic decomposition, define
    \begin{equation*}
        J_k:=\left\{B_j=B(\mathbf x_j,r_j): 2^{-k-1}<r_j\leq 2^{-k}\right\},~k=6,7,...
    \end{equation*}
    Note that each ball $B(\mathbf x_j,r_j)$ in the covering lies in a unique block $J_k$ and the blocks:
    \begin{equation*}
        J_6,~J_7,~J_8,~J_9,~J_{10},...
    \end{equation*}
    are disjoint. The sizes of the balls contained in each block are comparable and the radii of the balls in different blocks decreases as $k$ increase. Then we rewrite the summation
    \begin{equation}\label{summation}
        \sum_jr_j^{d- p\alpha-\eps}=\sum_k\sum_{B\in J_k}r(B)^{d-p\alpha-\eps}\approx \sum_k2^{-k(d-p\alpha-\eps)}\#J_k.
    \end{equation}
    Therefore, it suffices to estimate each term on the right-hand side from below.

      For each $(\underline{\mathbf x},r)\in I_1^{s'-1}\times I_2$, there is a moment curve of the form $H(\cdot,\underline{\mathbf x},r)\subseteq S_{s'}$ (If in $S_{s'}$. As a result, there are more than one moment curves satisfying this, then choose one  arbitrary.), there must be a block $J_k$ whose members collectively intersect ``most" part of the moment curve $H(\cdot,\underline{\mathbf x},r)$. In other words, there exists $k_0=k_0(\underline{\mathbf x},r)$ (depending on $(\underline{\mathbf x},r)$)) such that 
    \begin{equation}\label{mcbintersec}
       \mathcal L^1\Bigl( H(\cdot,\underline{\mathbf x},r)\bigcap \Bigl(\cup_{B\in J_{k_0}}B\Bigr) \Bigr)>\frac{1}{100k_0^2}.
    \end{equation}
    % This must hold or it will collapse to $|H_r|\leq 1$. 
    This can be proven by pigeonholing. Indeed, if no such $k_0$ exists, then for all $k$,
    \begin{equation*}
         \mathcal L^1\Bigl(H(\cdot,\underline{\mathbf x},r)\bigcap \Bigl(\cup_{B\in J_k}B\Bigr) \Bigr)\leq \frac{1}{100}\frac{1}{k^2}.
    \end{equation*}
    This implies
    \begin{align*}
        \frac{1}{10}\leq \mathcal{L}^1(H(\cdot,\underline{\mathbf x},r))&=\mathcal L^1\Bigl(H(\cdot,\underline{\mathbf x},r)\bigcap \Bigl(\cup_{j=6}^\infty B(\mathbf x_j,r_j)\Bigr) \Bigr)\\
        &\leq \sum_k   \mathcal L^1\Bigl(H(\cdot,\underline{\mathbf x},r)\bigcap \Bigl(\cup_{B\in J_k}B\Bigr) \Bigr)\\
        &\leq \frac{1}{100}\sum\frac{1}{k^2}\\
        &<\frac{1}{10},
    \end{align*}
    which is a contradiction.

    So for each $(\underline{\mathbf x},r)\in I_1^{s'-1}\times I_2$, there is a $k_0=k_0((\underline{\mathbf x},r))$ such that equation \eqref{mcbintersec} holds. Let $R_k:=\{(\underline{\mathbf x},r)\in I_1^{s'-1}\times I_2:k_0(\underline{\mathbf x},r)=k \}.$ Therefore, the union of $R_k$ is $I_1^{s'-1}\times I_2$. 
    % Consider each $R_k$, 
    Let $f_k$ be the characteristic function of $\bigcup_{B\in J_k}10B$ and take $\delta=2^{-k}$.  On one hand,
    % By the maximal estimate \eqref{bdd1},
    \begin{equation}\label{lastline}
        \begin{aligned}
        ||\M^s_{2^{-k}}f_k||^p_{L^p{(I_1^{s'-1}\times I_2)}}&=\int_{I_1^{s'-1}}\int_{I_2}\left| 
\sup_{\overline{\mathbf x}\in \R^{s}}\frac{1}{\mathcal L^d(H_\delta(\overline {\mathbf x},\underline{\mathbf x},r))}\int_{H_\delta(\overline {\mathbf x},\underline{\mathbf x},r)}f_k \right|^pdrd\underline{\mathbf x}\\
&\geq \int\int_{R_k}\left| 
\sup_{\overline{\mathbf x}\in \R^{s}}\frac{1}{\mathcal L^d(H_\delta(\overline {\mathbf x},\underline{\mathbf x},r)}\int_{H_\delta(\overline {\mathbf x},\underline{\mathbf x},r)}f_k \right|^pdrd\underline{\mathbf x}\\
&\gtrsim \mathcal L^{s'}(R_k)k^{-2p}.
        \end{aligned}
    \end{equation}
The last ``$\gtrsim$" is because $|H_\delta(\mathbf x,r)\bigcap \left(\bigcup_{B\in J_k} 10B\right)|\gtrsim \delta^{d-1} \times k^{-2}=2^{-k(d-1)}k^{-2}$ by \eqref{mcbintersec}. Indeed, since \eqref{mcbintersec} holds, if we enlarge each $B\in J_k$ by $10$ times, then the intersection $H_\delta(\cdot,\underline{\mathbf x},r)\cap \bigcup_{B\in J_k} 10B$ contains some subtubes of $H_\delta(\cdot,\underline{\mathbf x},r)$ with total length $\sim \frac{1}{k^2}$ and width $\delta$. So 
\begin{equation*}
    \int_{H_\delta(\mathbf x,r)}f_k=\mathcal L^d\left( H_\delta(\mathbf x,r)\bigcap \left(\bigcup_{B\in J_k}10B\right)\right)\gtrsim \delta^{d-1} \times k^{-2}=2^{-k(d-1)}k^{-2}.
\end{equation*}
$2^{-k(d-1)}$ cancels with $\frac{1}{\mathcal L^d(H_\delta(\mathbf x,r))}$ and we obtain the last line of \eqref{lastline}.

On the other hand, by the boundedness \eqref{bdd1},
\begin{equation*}
     ||\M^s_{2^{-k}}f_k||^p_p\lesssim 2^{p\alpha k}||f_k||_p^p\lesssim 2^{p\alpha k}\#J_k2^{-kd}.
\end{equation*}

    Combining the lower bound and the upper bound, we obtain a lower bound for $\#J_k$:
    \begin{equation*}
        \mathcal L^{s'}(R_k)k^{-2p}\lesssim 2^{-(d-p\alpha)k}\#J_k.
        % =2^{-k(2-\eps p)}\#J_k,
    \end{equation*}

      Plugging these back the summation \eqref{summation}, we have there is $C_\eps>0$ such that
    \begin{equation*}
        \sum_k\#J_k2^{-k(d-p\alpha-\eps)}\gtrsim \sum_k 2^{k\eps} k^{-2p}\mathcal L^2(R_k)\geq C_\eps \sum_k  \mathcal L^{d-1}(R_k')\gtrsim C_\eps.
    \end{equation*}
    This concludes
the proof of item $(i)$. 

We now proceed with the proof of item $(ii)$. Since
    \begin{equation*}
        \norm{\mathcal M^1_\delta f}_{L^p(I_1^{d-1}\times I_2)}\lesssim \norm{f}_{L^p(\R^d)},
    \end{equation*}
    By taking $f=\chi_{S_{d}}$, we can obtain $\norm{f}_{L^p(\R^d)}=|S_d|^{1/p}\gtrsim  \norm{\mathcal M^1_\delta \chi_{S_d}}_{L^p}\approx 1.$
   
This concludes the proof of Proposition \ref{MaxToDim}. 
\end{proof}

\section{Tangency of moment curves}\label{sectangency}

In this section, we discuss the conditions and properties for the exact and almost tangency of two moment curves of the form \eqref{mcset}. We first provide some basic observations about the moment curves. 
\begin{lemma}[comparable $\delta$-neighborhoods] \label{deltanbhdscomparable} Assume $H_\delta(\mathbf x,r)$ and $H_\delta(\mathbf x',r')$ are two $\delta$-neighborhoods of moment curves. If $|(\mathbf x,r)-(\mathbf x',r')|\leq \delta$, then there is $c$ and $C$ such that 
\begin{equation*}
    H_{c\delta}(\mathbf x,r)\subseteq H_\delta(\mathbf x',r')\subseteq CH_{C\delta}(\mathbf x,r).
\end{equation*} 
\end{lemma}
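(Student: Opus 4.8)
The plan is to prove the two-sided containment by exploiting that the moment curve map $(\mathbf{x},r,t)\mapsto \mathbf{x}+r\gamma(t)$ is Lipschitz and, on the relevant parameter range, that a point on one curve is close to a point on the nearby curve with a controlled correspondence of parameters. First I would record the elementary Lipschitz bound: for $t\in[-1,1]$ and $r,r'\in[1/2,2]$,
\[
\bigl|(\mathbf{x}+r\gamma(t))-(\mathbf{x}'+r'\gamma(t))\bigr|\le |\mathbf{x}-\mathbf{x}'|+|r-r'|\,|\gamma(t)|\lesssim |(\mathbf{x},r)-(\mathbf{x}',r')|,
\]
since $|\gamma(t)|\le \sqrt{d}$ on $[-1,1]$. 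Thus if $|(\mathbf{x},r)-(\mathbf{x}',r')|\le\delta$, then every point $\mathbf{x}+r\gamma(t)\in H(\mathbf{x},r)$ lies within distance $C_0\delta$ of the point $\mathbf{x}'+r'\gamma(t)\in H(\mathbf{x}',r')$, for an absolute constant $C_0=C_0(d)$. Equivalently, $H(\mathbf{x},r)\subseteq H_{C_0\delta}(\mathbf{x}',r')$ and, symmetrically, $H(\mathbf{x}',r')\subseteq H_{C_0\delta}(\mathbf{x},r)$.

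Next I would upgrade these inclusions of the curves themselves to inclusions of their $\delta$-neighborhoods by the triangle inequality for neighborhoods: in general $A\subseteq B_{\rho} \Rightarrow A_{\sigma}\subseteq B_{\rho+\sigma}$, where $A_\sigma$ denotes the $\sigma$-neighborhood of $A$. Applying this with $A=H(\mathbf{x},r)$, $B=H(\mathbf{x}',r')$, $\rho=C_0\delta$, $\sigma=\delta$ gives
\[
H_\delta(\mathbf{x},r)\subseteq H_{(C_0+1)\delta}(\mathbf{x}',r').
\]
Since the outer claimed inclusion is $H_\delta(\mathbf{x}',r')\subseteq C H_{C\delta}(\mathbf{x},r)$, it suffices to take $C\ge C_0+1$ (after swapping the roles of the primed and unprimed pair); the harmless dilation factor $C$ in front of $H_{C\delta}$ is even more room than needed, so that inclusion is immediate. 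For the left inclusion $H_{c\delta}(\mathbf{x},r)\subseteq H_\delta(\mathbf{x}',r')$, I would run the same neighborhood triangle inequality in the other direction: from $H(\mathbf{x},r)\subseteq H_{C_0\delta}(\mathbf{x}',r')$ we get $H_{c\delta}(\mathbf{x},r)\subseteq H_{(C_0c+c)\delta}(\mathbf{x}',r')$ — wait, that is the wrong direction. Instead I use $H(\mathbf{x}',r')\subseteq H_{C_0\delta}(\mathbf{x},r)$, hence any ball $B(\mathbf{y}',c\delta)$ with $\mathbf{y}'\in H(\mathbf{x}',r')$ is contained in $H_{(C_0+c)\delta}(\mathbf{x},r)$; turning this around, a point of $H_{c\delta}(\mathbf{x},r)$ within $c\delta$ of $\mathbf{x}+r\gamma(t)$ is within $(C_0+c)\delta$ of the corresponding $\mathbf{x}'+r'\gamma(t)\in H(\mathbf{x}',r')$, so $H_{c\delta}(\mathbf{x},r)\subseteq H_{(C_0+c)\delta}(\mathbf{x}',r')$; choosing $c>0$ small enough that $(C_0+1)c\le 1$, i.e. simply making $c$ absorb the loss, does not work unless $C_0<1$. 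The correct fix is to note the pointwise correspondence is two-sided and symmetric, so in fact $H_{c\delta}(\mathbf{x},r)\subseteq H_{\delta}(\mathbf{x}',r')$ holds once $c+C_0 c'\le 1$ in the appropriate bookkeeping — concretely I would just prove the clean statement: there is $C_1=C_1(d)\ge 1$ with $H_{\rho/C_1}(\mathbf{x}',r')\subseteq H_\rho(\mathbf{x},r)\subseteq H_{C_1\rho}(\mathbf{x}',r')$ whenever $|(\mathbf{x},r)-(\mathbf{x}',r')|\le\rho$, and then take $\rho=\delta$ and $c=1/C_1$, $C=C_1$.

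The main (and really the only) obstacle is bookkeeping the constants so that the inclusions compose in the stated direction without circularity; the geometric content is entirely contained in the one-line Lipschitz estimate $|\gamma(t)|\lesssim 1$ on $[-1,1]$ together with the restriction $r\in[1/2,2]$, which guarantees no degeneration of the curve. I would therefore organize the proof around a single symmetric lemma of the form ``parameters within $\rho$ $\Rightarrow$ neighborhoods comparable at scale $\rho$ with an absolute ratio,'' prove it by the pointwise $t\mapsto t$ matching above, and then read off the claimed $c,C$ as $c=C_1^{-1}$, $C=C_1$. No use of torsion or of the nondegeneracy of higher derivatives of $\gamma$ is needed here; that will only enter the later tangency lemmas.
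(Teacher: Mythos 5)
Your treatment of the right inclusion is exactly the paper's argument: match parameters $t\mapsto t$, use $|\gamma(t)|\le\sqrt{d}$ on $[-1,1]$ together with $r,r'\in[1/2,2]$, and conclude by the triangle inequality that $H_\delta(\mathbf x',r')\subseteq H_{C\delta}(\mathbf x,r)$ (the paper writes the explicit constant $3$). The genuine gap is the left inclusion $H_{c\delta}(\mathbf x,r)\subseteq H_\delta(\mathbf x',r')$. You notice, correctly and candidly, that the neighborhood-triangle-inequality route would need the parameter-to-point constant $C_0$ to satisfy $C_0<1$, which it does not ($C_0\sim\sqrt d$). But the ``clean statement'' you then substitute, $H_{\rho/C_1}(\mathbf x',r')\subseteq H_\rho(\mathbf x,r)\subseteq H_{C_1\rho}(\mathbf x',r')$ whenever $|(\mathbf x,r)-(\mathbf x',r')|\le\rho$, has exactly the same defect in its left half: your own pointwise matching only places a point of $H_{\rho/C_1}(\mathbf x',r')$ within $(1/C_1+C_0)\rho$ of $H(\mathbf x,r)$, and $1/C_1+C_0\le 1$ again forces $C_0<1$. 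So the left inclusion is asserted rather than proved, and the proposal does not close the gap it itself identified.

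In fact no bookkeeping can close it under the stated hypothesis, because the left inclusion is false as literally written: take $\mathbf x'=\mathbf x$ and $r'=r-\delta$, so $|(\mathbf x,r)-(\mathbf x',r')|=\delta$, and consider $\mathbf y=\mathbf x+r\gamma(1)$, which lies in $H_{c\delta}(\mathbf x,r)$ for every $c>0$. For any $t'\in[-1,1]$ the $i$-th coordinate of $\mathbf y-\mathbf x'-r'\gamma(t')$ is $r'(1-(t')^i)+\delta\ge\delta$, so $\operatorname{dist}(\mathbf y,H(\mathbf x',r'))\ge\sqrt d\,\delta>\delta$ and $\mathbf y\notin H_\delta(\mathbf x',r')$. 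To be fair, the paper's own proof shares this soft spot: it proves only the right inclusion and appeals to ``symmetry,'' which yields $H_\delta(\mathbf x,r)\subseteq H_{3\delta}(\mathbf x',r')$ rather than the stated left inclusion; and it is only this two-sided containment in $O(\delta)$-enlargements that is used later, which your argument does establish. If you want a literally correct lemma, either state the conclusion as the mutual containments $H_\delta(\mathbf x',r')\subseteq H_{C\delta}(\mathbf x,r)$ and $H_\delta(\mathbf x,r)\subseteq H_{C\delta}(\mathbf x',r')$, or keep the left inclusion but strengthen the hypothesis to $|(\mathbf x,r)-(\mathbf x',r')|\le c_1\delta$ with $c_1=c_1(d)$ small; with either fix your $t\mapsto t$ matching argument goes through verbatim.
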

\begin{proof}
    By symmetry, it suffices to prove the right inclusion relation. For any $\mathbf y\in H_\delta(\mathbf x',r')$ there is some $t_0\in [-1,1]$ such that 
    \begin{equation*}
        |\mathbf y-\mathbf x'-r'\gamma(t_0)|\leq \delta.
    \end{equation*}
    By triangle inequality,
    \begin{align*}
         |\mathbf y-\mathbf x-r\gamma(t_0)|&\leq   |\mathbf y-\mathbf x'-r'\gamma(t_0)|+  |\mathbf x'+r'\gamma(t_0)-\mathbf x-r\gamma(t_0)|\\
         &\leq \delta+2\delta\\
         &\leq 3\delta.&\qedhere
    \end{align*}
\end{proof}
This lemma tells us that $\delta$-perturbation to a moment curve is acceptable and the $\delta$-neighborhood of a moment curve is roughly the union of such perturbations.

\subsection{Exact tangency} We first discuss the exact tangency of two moment curves. In what follows, ``two moment curves" always means ``two distinct moment curves".
\begin{definition}[tangency of two moment curves]
    Consider two moment curves $H(\mathbf x,r)$ and $H(\mathbf x',r')$ with $(\mathbf x,r)\neq (\mathbf x',r')$. We will say $H(\mathbf x,r)$ and $H(\mathbf x',r')$ are \textit{tangent at a point $\mathbf y$} if there are $t,t'\in[-1,1]$ such that 
\begin{equation*}
    \mathbf y=\mathbf x +r\gamma(t)=\mathbf x'+r'\gamma(t'),
\end{equation*}
and the tangency vectors to $H(\mathbf x,r)$ and $H(\mathbf x',r')$ at $\mathbf y$ are parallel.
\end{definition}

% the geometry of moment curves and how they intersect or be tangent to each other. 

\begin{proposition}[characterization for tangency of two moment curves]\label{chafortan}

    Assume $H(\mathbf x,r)$ and $H(\mathbf x',r')$ are two moment curves, where $\mathbf x=(x_1,...,x_d)$ and $\mathbf x'=(x_1',...,x_d')$. 
    % We assume they have different radii, $r\neq r'.$
    % all $i=2,...,d$ 
    \begin{enumerate}
        \item $H(\mathbf x,r)$ and $H(\mathbf x',r')$ can be tangent only if $r\neq r'.$
        \item  When $r\neq r'$, define 
    \begin{equation*}
        \Delta_i((\mathbf x,r),(\mathbf x',r'))=|(x_i-x_i')(r-r')^{i-1}-(x_1-x_1')^i|,\ i=2,...,d.
    \end{equation*}
   Then $H(\mathbf x,r)$ and $H(\mathbf x',r')$ are tangent at a point $\mathbf x+r\gamma(t)=\mathbf x'+r'\gamma(t') $ \textbf{if and only if} 
    % for all $i=2,..., d$, 
    \begin{equation}\label{conditions}
            \Delta_i=0 \text{, for all } i=2,...,d \text{ and } t=t'=\frac{x_1-x_1'}{r'-r}\in [-1,1].
    \end{equation}
    \end{enumerate}

\end{proposition}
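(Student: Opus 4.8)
The plan is to extract from the definition of tangency a single vector equation plus the constraint that the two curves share the \emph{same} parameter value at the tangency point, and then read the stated conditions off coordinate by coordinate. Note that the tangent vector to $H(\mathbf x,r)$ at $\mathbf x+r\gamma(t)$ is $r\gamma'(t)=r(1,2t,3t^{2},\dots,dt^{d-1})$, which is never zero since $r\neq0$ and the first coordinate of $\gamma'$ is $1$. The crucial first step is therefore: if $r\gamma'(t)$ and $r'\gamma'(t')$ are parallel, then comparing first coordinates (namely $r$ and $r'$, both nonzero) forces the constant of proportionality to be $r/r'$, hence $\gamma'(t)=\gamma'(t')$, and comparing second coordinates gives $2t=2t'$, i.e. $t=t'$. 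In other words, two distinct moment curves of the form \eqref{mcset} can only be tangent with matching parameters $t=t'$; this is the only place the special shape of the moment curve (its first coordinate being literally $t$, so that $\gamma'$ has constant first coordinate) enters.

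Granting $t=t'$, tangency of $H(\mathbf x,r)$ and $H(\mathbf x',r')$ at $\mathbf y=\mathbf x+r\gamma(t)=\mathbf x'+r'\gamma(t)$ amounts to the single vector identity $\mathbf x-\mathbf x'=(r'-r)\gamma(t)$ for some $t\in[-1,1]$, i.e. $x_i-x_i'=(r'-r)t^{i}$ for $i=1,\dots,d$. For part (1): if $r=r'$ this identity forces $\mathbf x=\mathbf x'$, contradicting $(\mathbf x,r)\neq(\mathbf x',r')$; hence $r\neq r'$. For the forward implication of part (2), assume $r\neq r'$ and tangency. The $i=1$ equation gives $t=t'=\tfrac{x_1-x_1'}{r'-r}$, which lies in $[-1,1]$ because it is the parameter of a point on the truncated curve. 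Substituting $t^{i}=\bigl(\tfrac{x_1-x_1'}{r'-r}\bigr)^{i}$ into the $i$-th equation and clearing the nonzero factor $(r'-r)^{i-1}$ gives $(x_i-x_i')(r'-r)^{i-1}=(x_1-x_1')^{i}$ for $i=2,\dots,d$, which is exactly the vanishing of $\Delta_i$.

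For the converse implication of part (2), suppose $\Delta_i=0$ for $i=2,\dots,d$ and that $t:=\tfrac{x_1-x_1'}{r'-r}\in[-1,1]$; put $t'=t$ and $\mathbf y:=\mathbf x'+r'\gamma(t)$, so that trivially $\mathbf y\in H(\mathbf x',r')$. The $i=1$ relation $x_1-x_1'=(r'-r)t$ holds by the definition of $t$, and for $i=2,\dots,d$ the identity $\Delta_i=0$ reads $(x_i-x_i')(r'-r)^{i-1}=(x_1-x_1')^{i}=\bigl((r'-r)t\bigr)^{i}$, so dividing by $(r'-r)^{i-1}$ gives $x_i-x_i'=(r'-r)t^{i}$. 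Hence $\mathbf x-\mathbf x'=(r'-r)\gamma(t)$, so $\mathbf y=\mathbf x+r\gamma(t)\in H(\mathbf x,r)$ with the admissible parameter $t\in[-1,1]$. Finally the tangent vectors at $\mathbf y$ are $r\gamma'(t)$ and $r'\gamma'(t)$, which are parallel, and $(\mathbf x,r)\neq(\mathbf x',r')$ because $r\neq r'$; therefore the two curves are tangent at $\mathbf y$.

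I do not expect a genuine obstacle: everything reduces to linear algebra together with the polynomial substitution $t=(x_1-x_1')/(r'-r)$. The two points that require a little care are, first, the sign bookkeeping in the powers $(r-r')^{i-1}$ versus $(r'-r)^{i-1}$ when rewriting the point-matching equations in terms of the $\Delta_i$; and second, the observation that the endpoint constraint $t\in[-1,1]$ is precisely what guarantees $\mathbf y$ is an honest point of both \emph{truncated} curves rather than of their extensions, so the truncation has to be tracked through both directions of the equivalence.
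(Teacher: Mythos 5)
Your proposal follows essentially the same route as the paper: force $t=t'$ from the parallelism of the tangent vectors, then read the tangency off the coordinate system $x_i-x_i'=(r'-r)t^i$ and eliminate $t$ via the $i=1$ equation. In fact your derivation of $t=t'$ is slightly more careful than the paper's: the paper compares the ratios of the first two entries of $r\gamma'(t)$ and $r'\gamma'(t')$, which silently divides by $2rt$, whereas your argument (the proportionality constant is pinned down by the constant first coordinate of $\gamma'$, then the second coordinate gives $t=t'$) works also when $t=0$.

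The one point you flagged but did not actually resolve is the sign. Your elimination yields $(x_i-x_i')(r'-r)^{i-1}=(x_1-x_1')^i$, while $\Delta_i$ in the statement is built from $(x_i-x_i')(r-r')^{i-1}-(x_1-x_1')^i$; these coincide only for odd $i$ and differ by the factor $(-1)^{i-1}$ for even $i$, so the sentence ``which is exactly the vanishing of $\Delta_i$'' is not literally justified when $i$ is even. Note that your version is the correct tangency condition: for $d=2$, $\mathbf x'=\mathbf 0$, $r'=1$, the discriminant computation in Proposition \ref{prointersectionvolum} locates tangency at $(r-1)x_2+x_1^2=0$, i.e.\ $x_2(1-r)=x_1^2$, which matches your $(r'-r)$ convention rather than the stated $x_2(r-1)=x_1^2$ of \eqref{0case}. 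So either carry the factor $(-1)^{i-1}$ explicitly and observe that, up to replacing $(r-r')^{i-1}$ by $(r'-r)^{i-1}$ (a sign that appears to be a typo in the statement for even $i$), your identity is the vanishing of $\Delta_i$, or restate $\Delta_i$ accordingly; with that reconciliation made explicit, both directions of your argument are complete, and the truncation constraint $t\in[-1,1]$ is handled correctly in each direction.
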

% \begin{remark}
%     Note that our moment curves are truncated. This means if the tangency is $\mathbf x+r\gamma(t)=\mathbf x'+r'\gamma(t') $ then both $t$ and $t'$ must be contained in $[-1,1].$
% \end{remark}
\begin{proof}
    % Without loss of generality, we can assume that $(x_1',...,x_d',r')=(0,0,...,1)$. I.e. one of the moment curves is the standard moment curve. 
    % To satisfy tangency, an intersection with the same parameter $t$ for both moment curves must exist. Indeed, 
   % \begin{enumerate}
   %     \item By contradiction, if $r=r'$ and there is a point $\mathbf y$ and $t,t'\in [-1,1]$ such that 
   %     \begin{equation*}
   %      \mathbf y=\mathbf x +r\gamma(t)=\mathbf x'+r\gamma(t'),
   %     \end{equation*}
   %     and the tangent vector are parallel. The second equation means that
   %     \begin{equation*}
   %         r(1,2t',...,dt'^{d-1})\parallel  r(1,2t,...,dt^{d-1}).
   %     \end{equation*}
   %     From the limitations on the first two entries we get $1=\frac{r}{r}=\frac{2rt'}{2rt}$ hence $t=t'.$ Take it back to the intersection equation we get 
   %     \begin{equation*}
   %          \mathbf y=\mathbf x +r\gamma(t)=\mathbf x'+r\gamma(t).
   %     \end{equation*}
   %     This implies $\mathbf x=\mathbf x'$ which contradicts with our assumption that $(\mathbf x,r)\neq (\mathbf x',r')$.
   %     \item  
       We will prove \textit{(2)} and observe along the way that \textit{(1)} holds.
       
       \noindent\textit{Necessity.} If the two moment curves intersect then for some point $t,t'\in [-1,1]$, we have $\mathbf x'+r'\gamma(t')=\mathbf x+r\gamma(t)$, more precisely, 
    % tangent vectors of the two moment curves are parallel, then 

\begin{equation}
    \left\{
        \begin{aligned}\label{systems}
            x_1'+r't' &= x_1+rt \\
            x_2'+r't'^2 &= x_2+rt^2 \\
            \vdots \\
            x_d'+r't'^d &= x_d+rt^d\\
        \end{aligned}
    \right. 
\end{equation}

    Moreover, if they are also tangent here, then
    \begin{equation}\label{tangentcondition}
        r'(1,2t',...,dt'^{d-1})\parallel r(1,2t,...,dt^{d-1}).
    \end{equation}
     From the limitations on the first two entries we obtain $\frac{r'}{r}=\frac{2r't'}{2rt}$. This implies that $t'=t.$ 
     
     If we also have $r=r'$, then take it back to equation \eqref{systems}, we deduce that $\mathbf x=\mathbf x'$ which contradicts with the assumption that the two moment curves are distinct. This concludes the proof of part \textit{(1)}. 

     Since we have $r\neq r',$ plug it back to equation \eqref{systems}, we obtain the conditions \eqref{conditions}.
     
     % hence $t=t'.$ Take it back to the intersection equation we get 
       % \begin{equation*}
       %      \mathbf y=\mathbf x +r\gamma(t)=\mathbf x'+r\gamma(t).
       % \end{equation*}
    % Like before, equation \eqref{tangentcondition} implies $t'=t$.  Plug $t=t'$ back to the system \eqref{systems},
    % With this condition, we also want $(t,...,t^d)=(x_1,...,x_d)+r(t',...,t'^d)$, this means 
% From this system and $t=t'$, 
% we get the conditions \eqref{conditions}. 

\noindent\textit{Sufficiency.} Conversely, if the conditions \eqref{conditions} hold then the system of equation \eqref{systems} is satisfied with $t=t'\in [-1,1]$. Thus the two moment curves must intersect at some point where their parameters are the same. The tangent vectors to $H(\mathbf x,r)$ and $H(\mathbf x',r')$ at the intersection point are $r\gamma'(t)$ and $r'\gamma'(t')$ respectively. If $t=t'$ as assumed in \eqref{conditions}, these two tangent vectors are parallel.
\end{proof}
From Proposition \ref{chafortan}, we can also see that two moment curves can be tangent at no more than one point. This follows from the expression for $t,t'$ in \eqref{conditions}. Furthermore, from the first two equations of system \eqref{systems}, we also have the following 
\begin{lemma}[number of intersections]\label{numberofintersection}
    The number of points of intersection of two moment curves is at most $2$.
\end{lemma}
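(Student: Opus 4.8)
The plan is to reduce the claim to a statement about polynomials, using only the first two coordinates of the system \eqref{systems}. Suppose $H(\mathbf x,r)$ and $H(\mathbf x',r')$ intersect at a point with parameters $t$ (on the first curve) and $t'$ (on the second). Writing $a_i := x_i - x_i'$, the system \eqref{systems} reads $r\gamma(t) - r'\gamma(t') = (a_1,\dots,a_d)$; in particular the first two equations give
\begin{equation}\label{firsttwo}
  rt - r't' = a_1, \qquad rt^2 - r't'^2 = a_2 .
\end{equation}
First I would treat the degenerate case $r = r'$: then by part (1) of Proposition \ref{chafortan} the curves are not tangent, and \eqref{firsttwo} becomes $r(t - t') = a_1$ and $r(t^2 - t'^2) = a_2$, so either $t = t'$ (forcing $\mathbf x = \mathbf x'$, excluded) or $t + t' = a_2/a_1$ together with $t - t' = a_1/r$, which determines $(t,t')$ uniquely — at most one intersection. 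So assume $r \neq r'$.

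The main step is the case $r \neq r'$. From the first equation in \eqref{firsttwo}, $t' = (rt - a_1)/r'$, a degree-one function of $t$. Substituting into the second equation in \eqref{firsttwo} yields
\begin{equation*}
  rt^2 - \frac{(rt - a_1)^2}{r'} = a_2 ,
\end{equation*}
i.e. $r r' t^2 - (rt - a_1)^2 = a_2 r'$, a polynomial identity in $t$ whose leading term is $(r r' - r^2)t^2 = r(r' - r)t^2$. Since $r \neq 0$ and $r' \neq r$, this is a genuine quadratic in $t$, hence has at most two roots $t$. Each such root $t$ determines $t'$ uniquely via $t' = (rt - a_1)/r'$, and hence determines the intersection point $\mathbf x + r\gamma(t)$ uniquely. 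Therefore there are at most two intersection points. (One should note that not every root of this quadratic need lie in $[-1,1]$ or satisfy the remaining equations for $i \geq 3$; but that only decreases the count, so the bound of two still holds.)

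I expect no serious obstacle here — the only thing to be slightly careful about is the bookkeeping that distinct intersection points correspond to distinct parameter pairs $(t,t')$, which is immediate since the point $\mathbf x + r\gamma(t)$ has first coordinate $x_1 + rt$ and $r \neq 0$, so $t$ is recovered from the point; then $t'$ is recovered from $t$ via the linear relation. It is also worth remarking that the bound is sharp — two moment curves can genuinely meet twice — and that tangency, by the analysis in Proposition \ref{chafortan}, forces the quadratic to have a double root, consistent with "tangent at no more than one point."
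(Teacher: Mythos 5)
Your proof is correct and takes essentially the same route the paper intends: the lemma is stated there as a direct consequence of the first two equations of \eqref{systems}, namely eliminating $t'$ via the linear first equation and obtaining a nondegenerate quadratic in $t$ when $r\neq r'$ (leading coefficient $r(r'-r)\neq 0$), with the degenerate case $r=r'$ handled separately. The only blemish is a harmless sign convention: with $a_i=x_i-x_i'$ the system actually gives $r\gamma(t)-r'\gamma(t')=-(a_1,\dots,a_d)$, which changes nothing in the counting argument.
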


% satisfy the above conditions and are tangent at somewhere, then the tangency is \textbf{unique}.

\subsection{Almost tangency}
For the almost tangency case, We have the following conclusions similar to those of Kolasa and Wolff \cite[Lemma 3.3]{Kolasa1999OnSV}, Wolff \cite[Lemma 11.20]{WolffBook} and Pramanik, Yang and Zahl \cite[Lemma 3.8]{pyz}.
% For the almost tangency case, 
% we have the following conclusion similar to Kolasa and Wolff's paper \cite[Lemma 3.3]{Kolasa1999OnSV}, Wolff's lecture notes \cite[Lemma 11.20]{WolffBook} and Pramanik, Yang and Zahl's \cite[Lemma 3.8]{pyz}.
We first give the following definitions. 

\begin{definition} For two moment curves $H(\mathbf x,r)$ and $H(\mathbf x',r')$, define 
\begin{equation}\label{distanceinparaspace}
         \overline{d}((\mathbf x,r),(\mathbf x',r'))=|x_1-x_1'|+|x_2-x_2'|+|r-r'|.
    \end{equation}
     We also define 
    \begin{equation}\label{delta2}
        \overline{\Delta}((\mathbf x,r),(\mathbf x',r'))=\frac{|(x_2-x_2')(r-r')-(x_1-x_1')^2|}{|x_1-x_1'|+|x_2-x_2'|+|r-r'|}=\frac{\Delta_2}{ \overline{d}}.
    \end{equation}
\end{definition}
    These $ \overline{d}$ and $\overline{\Delta}$ are used to describe the tangency in the plane. Similar to the computations in Proposition \ref{chafortan}, it is easy to check that when $\overline{\Delta}((\mathbf x,r),(\mathbf x',r'))=0$, the projections of $H(\mathbf x,r)$ and $H(\mathbf x',r')$ to the plane which are the two parabolas, are tangent.
    \begin{remark}\label{rmk2}
        The reason why we need to divide $\Delta_2$ by $\overline{d}$ is homogeneity. Heuristically, the degree of $\Delta_2$ is $2$ while $ \overline{d}$ is $1$. We will see from the proof of the following Proposition \ref{prointersectionvolum} that it is necessary to maintain the degree of $\overline{\Delta}$ and $ \overline{d}$ the same. 
    \end{remark}

\begin{proposition}[intersection volume]\label{prointersectionvolum}
     Assume $H(\mathbf x,r)$ and $H(\mathbf x',r')$ are two moment curves where $\mathbf x=(x_1,x_2,...,x_d),\ \mathbf x'=(x_1',x_2',...,x_d')$. 
    Then 
    % \begin{enumerate}
        % \item 
        $H_\delta(\mathbf x,r) \bigcap H_\delta(\mathbf x',r')$ is contained in a union of $\delta$-neighborhood(s) of at most two arc(s) of $H(\mathbf x,r)$ with total length $\lesssim\frac{\delta}{\sqrt{(\delta+\overline{\Delta})(\delta+\overline{d})}}$ hence with measure $\lesssim \frac{\delta^d}{\sqrt{(\delta+\overline{\Delta})(\delta+\overline{d})}}$. The implicit constants are independent of the choice of $H(\mathbf x,r)$, $H(\mathbf x',r')$ and $\delta.$
\end{proposition}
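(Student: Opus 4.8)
The plan is to reduce the $d$-dimensional statement to a planar tangency estimate for parabolas together with a transversality (rigidity) bound coming from the higher-order coordinates. First I would observe that, by Lemma~\ref{deltanbhdscomparable}, it suffices to control the set of parameters $t\in[-1,1]$ for which the point $\mathbf x+r\gamma(t)$ lies within $O(\delta)$ of the curve $H(\mathbf x',r')$; call this set $T\subseteq[-1,1]$, and we want to show $T$ is contained in at most two intervals of total length $\lesssim \delta/\sqrt{(\delta+\overline\Delta)(\delta+\overline d)}$, from which the measure bound $\delta^d \times |T|$-type estimate follows (the $\delta^{d-1}$ comes from the cross-section, times length $|T|$). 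The point $\mathbf x+r\gamma(t)$ is within $O(\delta)$ of $H(\mathbf x',r')$ iff there is $t'\in[-1,1]$ with $|x_i - x_i' + r t^i - r' t'^i|\lesssim\delta$ for all $i=1,\dots,d$.

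The core of the argument is the $i=1,2$ constraints, which control the planar projection. Setting $a=x_1-x_1'$, $b=x_2-x_2'$, one has $|a + rt - r't'|\lesssim\delta$ and $|b + rt^2 - r't'^2|\lesssim\delta$. Using the first to solve $t' = (a+rt)/r' + O(\delta)$ and substituting into the second yields a quadratic inequality in $t$ of the form $|Q(t)|\lesssim\delta$ where $Q(t) = \alpha t^2 + \beta t + \kappa$ with leading coefficient $\alpha = r(r-r')/r' \approx (r-r')$ (recall $r,r'\in[1/2,2]$), and discriminant comparable to $\overline\Delta\cdot\overline d$ up to harmless constants — this is exactly the computation behind \eqref{delta2}, and it is the reason $\overline\Delta$ was normalized by $\overline d$ (Remark~\ref{rmk2}). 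A standard sublevel-set estimate for quadratics then gives that $\{t: |Q(t)|\lesssim\delta\}$ is contained in at most two intervals, each of length $\lesssim \sqrt{\delta/|\alpha|}$ when we are far from the tangency ($\overline\Delta\gtrsim\delta$, so the two roots are separated), and of length $\lesssim \delta/\sqrt{|\alpha|\overline\Delta}$ in the near-tangent regime — uniformly this is $\lesssim \delta/\sqrt{(\delta+\overline\Delta)|\alpha|}$. I would treat the two cases $|r-r'|\gtrsim \delta + \overline d$ (so $|\alpha|\approx \overline d$, giving the claimed bound directly) and $|r-r'|\lesssim \delta+\overline d$ separately; in the latter case $\overline d$ is dominated by $|a|+|b|$, and one extracts the gain from the $i=2$ equation viewed as a constraint with effective "curvature" $\sim |a|$ or from the $i=1$ equation pinning $t$ to an interval of length $\lesssim \delta/|a|$ — either way recovering $\lesssim\delta/\sqrt{(\delta+\overline\Delta)(\delta+\overline d)}$. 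Finally, the constraints $i=3,\dots,d$ only shrink $T$ further, so they can be discarded (they are needed elsewhere for torsion but not here); and the "at most two arcs" count is consistent with Lemma~\ref{numberofintersection}.

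The main obstacle I anticipate is the careful bookkeeping in the regime $|r-r'|\lesssim\delta$, where the leading coefficient $\alpha$ of $Q$ degenerates and the "quadratic sublevel set" heuristic breaks down: there $Q$ is essentially linear (or constant), and one must instead argue that either the linear term $\beta\approx a$ is large — pinning $t$ to a short interval via the $i=1$ equation — or both $a$ and $r-r'$ are $O(\delta)$, in which case $b$ carries $\overline d$ and the $i=2$ equation $|b + r(t^2-t'^2) + O(\delta)|\lesssim\delta$ with $t-t'=O(\delta)$ forces... actually forces a constraint only through $b$ itself, so one needs $|b|\lesssim\delta$ too, collapsing to $\overline d\lesssim\delta$ where the claimed bound is trivial ($\lesssim 1$). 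Making this case analysis airtight, and checking the endpoint/boundary effects from $t,t'\in[-1,1]$, is where the real work lies; the geometric input (parabola tangency) and the algebra (discriminant $\asymp \Delta_2$) are routine once set up. I would also double-check that the normalization $r\in[1/2,2]$ is used consistently so that all ratios $r/r'$, $1/r'$ are $\approx 1$ and absorbed into implicit constants, matching the uniformity claim in the statement.
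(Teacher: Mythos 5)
Your proposal is correct and follows essentially the same route as the paper: reduce to the first two coordinates, where the intersection condition becomes a sublevel set $\{t:|Q(t)|\lesssim\delta\}$ of a quadratic whose leading coefficient is $\asymp r-r'$ and whose discriminant corresponds to $\overline{\Delta}\,\overline{d}$, yielding at most two parameter intervals of the stated total length, which are then thickened by $\delta^{d-1}$ in the remaining directions. The paper carries out this planar computation after normalizing one curve to $H(\mathbf 0,1)$ and passing through the projection identity $P_\delta=\pi_2(H_\delta(\mathbf x,r))$, and it compresses your degenerate regime $|r-r'|\lesssim\delta$ into ``direct computation,'' so your explicit case analysis there is consistent with, and if anything more careful than, the paper's argument.
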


\begin{proof}
    We first prove the conclusion in the plane from which we can deduce the higher dimensional cases.

    We can assume that one of the parabolas is $H(\mathbf 0,1)$ and the other one is $H(\mathbf x,r)$. Since a non-degenerate affine transformation will not affect the intersection relation of two moment curves and furthermore, since our centers and radii are limited in a compact set, the intersection area is changed at most some constant multiples. In addition, we can also assume that the two parabolas intersect. This is because, if $H_\delta(\mathbf 0,1)\cap H_\delta(\mathbf x,r)\neq \emptyset$ (which is the case we care about, otherwise the estimate is trivial), then through a translation, we can find a new parabola $H(\mathbf x',r)$ such that 
    \begin{equation*}
        H(\mathbf x',r)\bigcap  H(\mathbf 0,1)\neq \emptyset,
    \end{equation*}
    and $|\mathbf x-\mathbf x'|\leq \delta$. Therefore, via triangle inequality, we have 
    \begin{equation*}
         H_\delta(\mathbf x,r)\bigcap  H_\delta(\mathbf 0,1)\subseteq   H_{(100d)^d\delta}(\mathbf x',r)\bigcap  H_{(100d)^d\delta}(\mathbf 0,1).
    \end{equation*}
    So it suffices to analyze the intersection case. 

    The proof is a direct computation. The parabolas are graphs of the following two functions over $[-1,1]$,
    \begin{equation*}
        f_1(t)=t^2,\quad f_2(t)=x_2+\frac{(t-x_1)^2}{r}.
    \end{equation*}
    The difference between the two quadratic functions is 
    \begin{equation*}
        h(t)=t^2-x_2-\frac{(t-x_1)^2}{r}=(1-\frac{1}{r})t^2+\frac{2x_1}{r}t-(x_2+\frac{x_1^2}{r}).
    \end{equation*}
    Since we assume the two parabolas intersect, this means the discriminant is nonnegative. By Lemma \ref{numberofintersection}, it suffices to consider one of the solutions, denoted as $t_0,$ the other one is similar. Assume $s$ satisfies 
    \begin{equation*}
        |h(t_0+s)|= (100d)^d\delta.
    \end{equation*}
    Our goal is to give an upper bound for the smaller $|s|$. Note that $ |h(t_0+s)|$ is also a quadratic function. It is easy to check by direct computations that 
    \begin{equation*}
        |s|\lesssim_d \frac{\delta}{\sqrt r \sqrt{(r-1)x_2+x_1^2}}  \lesssim \frac{\delta}{\sqrt{(\overline{\Delta}((\mathbf x,r),(\mathbf 0,1))+\delta)(\overline{d}((\mathbf x,r),(\mathbf 0,1))+\delta)}}
    \end{equation*}
    which is desired. In the second $\lesssim$, we used the definition \eqref{delta2} of $\Delta_2$ and Remark \ref{rmk2} below that definition. There are at most two $t_0$'s, so the number of such intervals is at most two.

    For the higher dimensional case, define the orthogonal projection $ \pi_2:\R^d\to   \R^2$
    \begin{equation*}
       \pi_2(x_1,x_2,...,x_d) := (x_1,x_2).    
    \end{equation*}
    % where the image $\R^2$ is the $x_1ox_2$ plane.
    % the orthogonal project from $\R^d$ to $x_1ox_2$ plane $\cong \R^2$ as $\pi_2,$ 
     When $H_\delta(\mathbf x,r)\bigcap H_\delta(\mathbf x',r')=\emptyset,$ the estimate holds trivially, so we assume the intersection is nonempty.  
     % the two higher dimensional moment curves are tangent, by Proposition \ref{tangentcondition}, the system \eqref{conditions} hold. The first two equations mean that the projected parabola in $\R^2$  
    Clearly, the projections of the moment curves
      \begin{equation*}
       \pi_2(H(\mathbf x,r))=\{(x_1,x_2)+r(t,t^2):t\in[-1,1]\}=:P(x_1,x_2,r)=P
    \end{equation*}
    and
    \begin{equation*}
        \pi_2(H(\mathbf x',r'))=\{(x_1',x_2')+r'(t',t'^2):t'\in[-1,1]\}=:P(x_1',x_2',r')=P'
    \end{equation*}
    are two parabolas in the plane. We are going to prove $\pi_2(H_\delta(\mathbf x,r))$ and $\pi_2(H_\delta(\mathbf x',r'))$ are $\delta$-neighborhoods in the plane of the the parabolas $P_\delta(x_1,x_2,r)=P_\delta$ and $P_\delta(x_1',x_2',r')=P'_\delta$ respectively. That is, we prove the following  
    \begin{equation}\label{p=pi}
        P_\delta = \pi_2(H_\delta(\mathbf x,r)).
    \end{equation}

    We first prove $\pi_2(H_\delta(\mathbf x,r))\subseteq P_\delta$. For each $\mathbf y\in H_\delta(\mathbf x,r)$ there is a $\mathbf p\in H(\mathbf x,r)$ such that 
    \begin{equation*}
        |\mathbf p-\mathbf y|\leq \delta.
    \end{equation*}
    Since $\pi_2$ is Lipschitz with constant $1$, this implies that for the point $\pi_2(\mathbf p)\in    \pi_2(H(\mathbf x,r))$,
    \begin{equation*}
        |\pi_2(\mathbf y)-\pi_2(\mathbf p)|\leq |\mathbf p-\mathbf y|\leq \delta.
    \end{equation*}
    Since $\pi_2(\mathbf p)\in P$, $\pi_2(H_\delta(\mathbf x,r))\subseteq P_\delta$. For the other inclusion $\pi_2(H_\delta(\mathbf x,r))\supseteq P_\delta$, assume $(y_1,y_2)\in P_\delta$, then there is a point on the moment curve $\mathbf p=(p_1,p_2,...,p_d)\in H(\mathbf x,r)$ such that 
    \begin{equation}\label{ClosePoint}
        |(y_1,y_2)-\pi_2(\mathbf p)|=|(y_1,y_2)-(p_1,p_2)|\leq \delta.
    \end{equation}
    By \eqref{ClosePoint},  $|(y_1,y_2,p_3,...,p_d)-\mathbf p|\leq \delta.$ Hence $(y_1,y_2,p_3,...,p_d)\in H_\delta(\mathbf x,r)$ whose projection under $\pi_2$ is exactly $(y_1,y_2).$ Therefore, $\pi_2(H_\delta(\mathbf x,r))\supseteq P_\delta$.
  
    With this fact combined with a similar argument, we can deduce that
    \begin{equation}\label{inc}
        H_\delta(\mathbf x,r)\bigcap H_\delta(\mathbf x',r')=\pi_2^{-1}\big( P_\delta \cap P'_\delta\big)\bigcap  H_\delta(\mathbf x,r)\bigcap H_\delta(\mathbf x',r').
    \end{equation}
    Indeed, since
    \begin{equation*}
        \pi_2( H_\delta(\mathbf x,r)\bigcap H_\delta(\mathbf x',r'))\subseteq  \pi_2( H_\delta(\mathbf x,r))\bigcap  \pi_2( H_\delta(\mathbf x',r'))=P_\delta\bigcap P'_\delta.
    \end{equation*}
    This implies that 
    \begin{equation*}
        H_\delta(\mathbf x,r)\bigcap H_\delta(\mathbf x',r'))\subseteq \pi_2^{-1}\left(P_\delta\bigcap P'_\delta\right)
    \end{equation*}
which means the right inclusion of \eqref{inc} holds. Whereas the left inclusion is trivial.

    From the first part of the proof, we know that $P_\delta \bigcap P'_\delta$ is contained in a union of $\delta$-neighborhood(s) of arc(s) in the plane of  total length $\lesssim\frac{\delta}{\sqrt{(\delta+\overline{\Delta})(\delta+\overline{d})}}.$ Denote this union of the $\delta$-neighborhood(s) as $Q$. From \eqref{inc} and $P_\delta \bigcap P'_\delta\subseteq Q$, we have 
    \begin{equation*}
         H_\delta(\mathbf x,r)\bigcap H_\delta(\mathbf x',r')\subseteq\pi_2^{-1}(Q)\bigcap  H_\delta(\mathbf x,r).
    \end{equation*}
     The right-hand side is a union of at most two $\delta$-neighborhoods of $H(\mathbf x,r)$ with dimensions $\lesssim \frac{\delta}{\sqrt{(\delta+\overline{\Delta})(\delta+\overline{d})}}\times \delta\times...\times \delta= \frac{\delta}{\sqrt{(\delta+\overline{\Delta})(\delta+\overline{d})}}\times \delta^{d-1}$. 
    \end{proof}

    For two ``very different" moment curves that are exactly tangent, the intersection of their $\delta$-neighborhoods behaves locally like the intersection of two $\delta$-tubes. That is,
    \begin{proposition}\label{exacttangencyvolume}
          Assume that $H_\delta(\mathbf x,r)$ and $ H_\delta(\mathbf x',r')$ as before. 
          \begin{enumerate}
              \item  If $\overline{d}((\mathbf x,r),(\mathbf x',r'))\gtrsim  1$ and the two moment curves are exactly tangent, then the intersection is comparable to a rectangle with dimensions $\sqrt \delta\times \delta\times...\times \delta=\sqrt \delta \times \delta^{d-1}$ hence,
        \begin{equation}\label{tangencyvolume1}
       \mathcal L^{d}\Bigl(H_\delta(\mathbf x,r)\bigcap H_\delta(\mathbf x',r')\Bigr) \sim  \delta^{d-\frac{1}{2}}.
    \end{equation}
    \item     If $\mathbf z\in B(\mathbf x,\frac{\delta}{1000d}),$ then a similar estimate also holds, i.e. 
    \begin{equation}\label{tangencyvolume2}
         \mathcal L^{d}\Bigl(H_\delta(\mathbf x',r')\bigcap H_\delta(\mathbf z,r)\Bigr) \sim  \delta^{d-\frac{1}{2}}.
    \end{equation}
    
    % $H(\mathbf z,r')$ enjoys the same property as $H(\mathbf x',r')$ .
          \end{enumerate}

    \end{proposition}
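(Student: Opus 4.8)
The strategy is to deduce both parts from Proposition \ref{prointersectionvolum}, using that proposition's two-sided control once we know that $\overline{\Delta}$ is small and $\overline{d}\sim 1$. First I would address part (1). We are told the two moment curves are exactly tangent, so Proposition \ref{chafortan} applies: all the quantities $\Delta_i$ vanish, and in particular $\Delta_2=0$, hence $\overline{\Delta}=\Delta_2/\overline{d}=0$. We are also given $\overline{d}\gtrsim 1$; since centers and radii live in the compact parameter box, $\overline{d}\lesssim 1$ too, so $\delta+\overline{d}\sim 1$. Plugging $\overline{\Delta}=0$ and $\delta+\overline{d}\sim 1$ into the length bound of Proposition \ref{prointersectionvolum} gives that the intersection is contained in a union of at most two $\delta$-neighborhoods of arcs of $H(\mathbf x,r)$ of total length $\lesssim \delta/\sqrt{(\delta+0)(1)}=\sqrt\delta$, hence of measure $\lesssim \delta^{d-1}\cdot\sqrt\delta=\delta^{d-1/2}$. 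This gives the upper bound in \eqref{tangencyvolume1}.

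For the matching lower bound, Proposition \ref{prointersectionvolum} is only an upper bound, so I need a separate short argument. The point is that near the tangency point the two curves are genuinely second-order tangent (their tangent vectors agree, by the tangency condition \eqref{tangentcondition}) but not worse — they are distinct moment curves, so by Lemma \ref{numberofintersection} they meet in at most two points and are not identical, and one checks the second-order contact is nondegenerate because $r\neq r'$ (part (1) of Proposition \ref{chafortan}). Concretely, working in the plane after the affine normalization used in the proof of Proposition \ref{prointersectionvolum}, the gap function $h(t)$ between the two parabolas vanishes to exactly second order at the tangency parameter with leading coefficient $1-1/r$ of size $\sim 1$ (this uses $\overline{d}\sim 1$, which forces $|r-1|\sim1$ or $|x_1|\sim 1$, and in the tangent case $\Delta_2=0$ ties these together). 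Hence $|h(t_0+s)|\sim s^2$, so the set where $|h|\le \delta$ is an arc of length $\sim\sqrt\delta$, and its $\delta$-neighborhood genuinely lies in both $\delta$-tubes; pulling back under $\pi_2$ as in \eqref{p=pi}–\eqref{inc} produces a full $\sqrt\delta\times\delta\times\cdots\times\delta$ slab inside $H_\delta(\mathbf x,r)\cap H_\delta(\mathbf x',r')$, giving measure $\gtrsim \delta^{d-1/2}$.

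For part (2), the idea is that replacing $\mathbf x$ by a point $\mathbf z$ with $|\mathbf z-\mathbf x|\le \delta/(1000d)$ is a $\delta$-perturbation, so Lemma \ref{deltanbhdscomparable} gives $H_{c\delta}(\mathbf x,r)\subseteq H_\delta(\mathbf z,r)\subseteq H_{C\delta}(\mathbf x,r)$ for absolute constants $c,C$. Intersecting with $H_\delta(\mathbf x',r')$ and applying part (1) at scales $c\delta$ and $C\delta$ (note $\overline d((\mathbf x,r),(\mathbf x',r'))\gtrsim 1$ is unchanged, and exact tangency of $H(\mathbf x,r)$ and $H(\mathbf x',r')$ is a hypothesis we carry over) sandwiches the measure between constant multiples of $(c\delta)^{d-1/2}$ and $(C\delta)^{d-1/2}$, which is $\sim\delta^{d-1/2}$. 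The main obstacle is the lower bound in part (1): Proposition \ref{prointersectionvolum} only gives containment, so one must verify that the second-order contact at the tangency point is nondegenerate and that the resulting planar arc genuinely lifts to a slab inside \emph{both} tubes — this is where the hypothesis $\overline d\gtrsim 1$ (ruling out the curves being too close, which would allow higher-order contact) and $r\neq r'$ are essential.
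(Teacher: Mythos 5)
Your upper bound and your treatment of part (2) are fine: deducing the $\lesssim$ direction from Proposition \ref{prointersectionvolum} with $\overline{\Delta}=0$ and $\delta+\overline{d}\sim 1$ is exactly the paper's argument, and sandwiching $H_\delta(\mathbf z,r)$ between $H_{\delta/2}(\mathbf x,r)$ and $H_{2\delta}(\mathbf x,r)$ and rescaling is a legitimate (if slightly different) way to get part (2), equivalent to the paper's triangle-inequality step.

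The gap is in your lower bound for part (1). You establish the correct planar picture (the gap function $h$ vanishes to exactly second order with leading coefficient $|1-1/r|\sim 1$, so $\{|h|\le\delta\}$ is an arc of parameter length $\sim\sqrt\delta$), but the final step --- ``pulling back under $\pi_2$ as in \eqref{p=pi}--\eqref{inc} produces a full $\sqrt\delta\times\delta^{d-1}$ slab inside $H_\delta(\mathbf x,r)\cap H_\delta(\mathbf x',r')$'' --- is not justified. The identity \eqref{inc} only says the $d$-dimensional intersection is \emph{contained} in $\pi_2^{-1}(P_\delta\cap P'_\delta)$; it is an upper-bound device and cannot produce points of the intersection. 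A large planar overlap of the projected parabolas says nothing about coordinates $3,\dots,d$: you must additionally show that over the parameter arc of length $\sim\sqrt\delta$ around the tangency point the two space curves (or a common reference set) stay within $O(\delta)$ of each other in \emph{all} coordinates, which uses the full tangency conditions $\Delta_i=0$, $t=t'$, not just $\Delta_2=0$. This is precisely what the paper's proof supplies: it takes the segment $L$ of length $\tfrac{1}{(100d)^d}\sqrt\delta$ along the common tangent line $r\gamma'(t)$ at the tangency point and shows by a Taylor expansion in every coordinate (the second-order term is $\le 50d^2 s_o^2\le\delta/1000$) that the $\delta$-scale tube $L_\delta$ lies inside \emph{both} $H_\delta(\mathbf x,r)$ and $H_\delta(\mathbf x',r')$, giving the $\gtrsim\delta^{d-\frac12}$ bound. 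Your argument can be repaired by replacing the $\pi_2$-pullback with such a $d$-dimensional Taylor estimate (e.g.\ matching points of the two curves with equal first coordinate and checking the difference in each coordinate $i\ge 2$ is $O(|t-t_0|^2)$), but as written the lift from the plane to $\R^d$ is missing. Incidentally, note that the paper's lower-bound construction does not need $\overline d\gtrsim 1$ at all; that hypothesis only enters the upper bound.
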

    \begin{proof}
         Since the point of tangency is unique, the intersection of $\delta$-neighborhoods is contained in a $\delta$-neighborhood of one piece of an arc. Since the two moment curves are exactly tangent, \eqref{conditions} holds which implies that $\overline{\Delta}=0$. Applying the upper bound given by Proposition \ref{prointersectionvolum},
         we obtain the $\lesssim$ part. 
         
         % The above argument can directly deduce $\lesssim$ part of \eqref{tangencyvolume1},
         For the other side, we will prove there is a
    $\delta^{\frac{1}{2}}\times \delta\times...\times \delta$ tube that is contained in both $\delta$-neighborhoods of the two moment curves. Indeed, assume the two moment curves are exactly tangent at 
    \begin{equation}\label{tgt}
        \mathbf x+r\gamma(t)=\mathbf x'+r'\gamma(t), 
    \end{equation}
    Define line segment 
    \begin{equation*}
        L=\{\mathbf x+r\gamma(t)+sr\gamma'(t):s\in(0,\frac{1}{(100d)^d}\sqrt\delta)\}.
    \end{equation*}
    Consider its $\frac{1}{(100d)^d}\delta$-neighborhood $L_\delta$.   This is a $\delta^{\frac{1}{2}}\times \delta\times...\times \delta$ tube. We will prove $L_\delta\subseteq H_\delta(\mathbf x,r)$. For each $\mathbf y\in L_\delta$, there is a $s_o\in (0,\frac{1}{(100d)^d}\sqrt\delta)$ such that 
    \begin{equation}\label{hahaha}
        |\mathbf y-\mathbf x-r\gamma(t)-s_or\gamma'(t)|\leq \frac{1}{(100d)^d}\delta,
    \end{equation}
    then by triangle inequality, we have
    \begin{align*}
        |\mathbf y-\mathbf x&-r\gamma(t+s_o)|\\
        &\leq |\mathbf y-\mathbf x-r\gamma(t)-s_or\gamma'(t)| + |\mathbf x+r\gamma(t)+s_or\gamma'(t)-\mathbf x-r\gamma(t+s_o)| \\
        &=:I+II .
    \end{align*}
    The first term on the right hand side $I$ is under control by \eqref{hahaha}. For the second term $II$, by Taylor expansion, 
        \begin{align*}
      II&=|\mathbf x+r\gamma(t)+s_or\gamma'(t)-\mathbf x-r(\gamma(t)+\gamma'(t)s_o+\frac{\gamma''(t)}{2}s_o^2+o(|s_o|^2))|\\
       &=|r\frac{\gamma''(t)}{2}s_o^2+o(|s_o|^2)|\\
        &\leq 50d^2s_o^2\\
        &\leq \frac{\delta}{1000}.
    \end{align*}
    Therefore, we have 
    \begin{equation}\label{trianginq}
         |\mathbf y-\mathbf x-r\gamma(t+s_o)|\leq \frac{\delta}{50}.
    \end{equation}
    Combine this estimate with for $I$ and $II$, we deduce that $L_\theta\subseteq H_\delta(\mathbf x,r).$ By symmetry and the fact that $H_\delta(\mathbf x',r')$ is tangent to $H_\delta(\mathbf x,r)$ at the same point, we obtain  $L_\theta\subseteq H_\delta(\mathbf x',r').$ This concludes the proof of the first part .

    For the second item, it suffices to prove that $L_\delta\subseteq H_\delta(\mathbf z,r)$. By triangle inequality, we have for each $\mathbf y\in L_\delta$,
    \begin{equation*}
        |\mathbf y-\mathbf z-r\gamma(t+s_o)|\leq  |\mathbf y-\mathbf x-r\gamma(t+s_o)|+  |\mathbf x+r\gamma(t+s_o)-\mathbf z-r\gamma(t+s_o)|.
    \end{equation*}
    For the first term, apply \eqref{trianginq}, for the second term, apply $\mathbf z\in B(\mathbf x,\frac{\delta}{1000d})$. 
    % It remains to verify that $\overline{d}((\mathbf z,r),(\mathbf x',r'))\gtrsim 1$ which is deduced from triangle inequality.
    \end{proof}

\section{Sharpness of the results}\label{examplesforsharpness}
In this section, we test several types of examples that will unravel various types of sharpness for the conclusions.

\subsection{Optimal exponent for $\delta$}\label{SharpExponent}
\begin{example} In this example, we will show that 
  \begin{lemma}\label{SharpExponentExample}
For and $p\geq 1$ and $s\in \{1,...,d\}$, if the maximal estimate
     \begin{equation*}
         \norm{\M_\delta^s f}_{L^p(I_1^{s'-1}\times I_2)}\lesssim \delta^{-\alpha}\norm{f}_p
     \end{equation*}
      holds, then $\alpha\geq \max \{\frac{s-2}{p},0\}$. 
  \end{lemma}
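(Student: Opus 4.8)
The plan is to prove the two lower bounds $\alpha\ge 0$ and $\alpha\ge\frac{s-2}{p}$ separately, each by feeding an explicit test function into the hypothesised inequality and sending $\delta\to 0$; the maximum of the two is the assertion.

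\emph{The bound $\alpha\ge 0$.} The point is merely that $\M^s_\delta$ cannot gain a power of $\delta$. Fix a nonnegative Schwartz function $f$ with $f\ge\chi_{[-3,3]^d}$ and $\|f\|_p\lesssim 1$. For every $(\underline{\mathbf x},r)\in I_1^{s'-1}\times I_2$, the choice $\overline{\mathbf x}=\mathbf 0$ makes every coordinate of every point of $H(\mathbf x,r)$ bounded by $2+\tfrac14$, so $H_\delta(\mathbf x,r)\subseteq[-3,3]^d$ once $\delta$ is small, whence $\M^s_\delta f(\underline{\mathbf x},r)\gtrsim 1$. Therefore $\|\M^s_\delta f\|_{L^p(I_1^{s'-1}\times I_2)}\gtrsim 1\gtrsim\|f\|_p$, and the hypothesis forces $\delta^{-\alpha}\gtrsim 1$ uniformly in small $\delta$, i.e.\ $\alpha\ge 0$. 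This already disposes of $s\in\{1,2\}$, where $\max\{\frac{s-2}{p},0\}=0$.

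\emph{The bound $\alpha\ge\frac{s-2}{p}$} (the content being $s\ge 3$). I would use a focusing example: a single moment curve for each value $(\underline{\mathbf x},r)$ of the $s'$ free parameters, all crammed into a $\delta$-neighbourhood of one low-dimensional surface. Let
\[
  S:=\bigl\{\,\mathbf x+r\gamma(t)\ :\ \mathbf x=(0,\dots,0,x_{s+1},\dots,x_d),\ \underline{\mathbf x}\in I_1^{s'-1},\ r\in I_2,\ t\in[\tfrac12,1]\,\bigr\},
\]
let $E$ be the $\delta$-neighbourhood of $S$, and let $f$ be a nonnegative Schwartz function with $f\ge\chi_E$ and $\|f\|_p^p\lesssim\L^d(E)$. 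The argument rests on two claims: (a) $\L^d(E)\approx\delta^{\,d-(s'+1)}=\delta^{\,s-2}$, and (b) $\M^s_\delta f(\underline{\mathbf x},r)\gtrsim 1$ for all $(\underline{\mathbf x},r)\in I_1^{s'-1}\times I_2$. Granting them, the hypothesis gives
\[
  1\ \lesssim\ \|\M^s_\delta f\|_{L^p(I_1^{s'-1}\times I_2)}^p\ \lesssim\ \delta^{-\alpha p}\,\|f\|_p^p\ \lesssim\ \delta^{-\alpha p+(s-2)}
\]
for all small $\delta>0$, hence $-\alpha p+(s-2)\le 0$, i.e.\ $\alpha\ge\frac{s-2}{p}$; note $p$ never enters, so this holds for every $p\ge 1$. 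Together with $\alpha\ge 0$ this is the lemma.

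Claim (b) is quick: taking $\overline{\mathbf x}=\mathbf 0$, the sub-arc $\{\mathbf x+r\gamma(t):t\in[\tfrac12,1]\}$ of $H(\mathbf x,r)$ lies on $S$, so its $\delta$-neighbourhood — a sub-tube of $H_\delta(\mathbf x,r)$ of measure $\gtrsim\delta^{d-1}$ — is contained in $E$; dividing by $\L^d(H_\delta(\mathbf x,r))\approx\delta^{d-1}$ gives $\M^s_\delta f(\underline{\mathbf x},r)\gtrsim 1$. Claim (a) is the step that needs care and is the main obstacle: one must verify that the sweep genuinely traces out an $(s'+1)$-dimensional surface of bounded geometry, so that its $\delta$-neighbourhood has measure comparable to $\delta^{d-(s'+1)}$ rather than something smaller. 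I would do this by checking that $(\underline{\mathbf x},r,t)\mapsto\mathbf x+r\gamma(t)$ is an injective immersion with all singular values $\approx 1$ on the compact box $I_1^{s'-1}\times I_2\times[\tfrac12,1]$: its differential is spanned by $e_{s+1},\dots,e_d,\ \gamma(t),\ r\gamma'(t)$, and the only nontrivial input is that $\gamma(t)$ and $\gamma'(t)$ are independent modulo $e_{s+1},\dots,e_d$, equivalently that $(t,t^2)$ and $(1,2t)$ are independent in the first two coordinates — their determinant is $t^2\ge\tfrac14$. (This is where $s\ge 2$, i.e.\ $s'+1\le d$, is used, and truncating $t$ away from $0$ is exactly what avoids the degeneracy of $\gamma$ at the origin and keeps $S$ embedded.) The upper bound in (a) then follows by covering $S$ with $\lesssim\delta^{-(s'+1)}$ balls of radius $\delta$, and the lower bound from the standard fact that the $\delta$-neighbourhood of an embedded compact $k$-manifold of bounded geometry has $d$-dimensional measure $\gtrsim\delta^{d-k}$.
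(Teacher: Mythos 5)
Your proposal is correct and is essentially the paper's own argument: the paper tests the estimate against the indicator of the $\delta$-neighborhood of $E^{s'}=\bigcup_{(\underline{\mathbf x},r)\in I_1^{s'-1}\times I_2} H(\mathbf 0,\underline{\mathbf x},r)$, shows $\M^s_\delta f\geq 1$ on all of $I_1^{s'-1}\times I_2$ exactly as in your claim (b), and bounds $\mathcal L^d(E^{s'}_\delta)\lesssim\delta^{s-2}$ by covering it with $\lesssim\delta^{-s'}$ curve-neighborhoods of measure $\approx\delta^{d-1}$, which is the same count as your covering of the $(s'+1)$-parameter sweep by $\lesssim\delta^{-(s'+1)}$ balls of measure $\delta^{d}$. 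Your separate box example for $\alpha\geq 0$ and the embeddedness/lower-bound discussion in (a) are harmless but not needed: only the upper bound on $\mathcal L^d(E)$ (Lipschitz covering) enters, and the paper obtains $\alpha\geq 0$ in the case $s=1$ from the same test function since then $\norm{f}_p\lesssim 1$.
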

  \begin{proof}
    Assume $p\geq 1$. Define 
    \begin{equation}\label{SharpExampleForDimBound}       E^{s'}=\bigcup_{(\underline{\mathbf x},r)\in I_
        1^{s'-1}\times I_2}H(\mathbf 0,\underline{\mathbf x},r)=\bigcup_{(\underline{\mathbf x},r)\in I_
        1^{s'-1}\times I_2}\{(\mathbf 0,\underline{\mathbf x})+r\gamma(t):t\in [-1,1]\}
    \end{equation}
    and $E_\delta^{s'}$ is its $\delta$-neighborhood
    \begin{equation*}
        E_\delta^{s'}=\{\mathbf x\in \R^d: |\mathbf x-\mathbf y|\leq \delta, \mathbf y\in E \}.
    \end{equation*}
    Choose $f=\chi_{E_\delta^{s'}}$ to be the characteristic function of $E_\delta^{s'}$.  Our goal is to estimate the both sides of \eqref{max1}. We are going to prove 
    \begin{enumerate}
        \item $\M^s_\delta f(\underline{\mathbf x},r)\geq 1$ for all $(\underline{\mathbf x},r)\in I_
        1^{s'-1}\times I_2.$
        \item $\norm{f}_p\lesssim \min\{\delta^{(s-2)/p}$,1\}
    \end{enumerate}
     
    By the definition of $\M^s_\delta$, note that for each $(\underline{\mathbf x},r)\in I_
        1^{s'-1}\times I_2,$ $H(\mathbf 0,\underline{\mathbf x},r)\subseteq E,$ so $H_\delta(\mathbf 0,\underline{\mathbf x},r)\subseteq E_\delta^{s'}$ and $H_\delta(\mathbf 0,\underline{\mathbf{x}},r)\cap E_\delta^{s'}=H_\delta(\mathbf 0,\underline{\mathbf{x}},r)$. Therefore,
    \begin{align*}
        \M^s_\delta f(\underline{\mathbf{x}},r)&=\sup_{\overline{\mathbf x}\in\R^s}\frac{1}{\mathcal L^d( H_\delta(\mathbf x,r))}\int_{H_\delta(\mathbf x,r)}|f|\\
        &\geq \frac{1}{\mathcal L^d( H_\delta(\mathbf 0,\underline{\mathbf x},r))}\mathcal L^d(H_\delta(\mathbf 0,\underline{\mathbf x},r)\cap E_\delta^{s'})= 1,
    \end{align*}
    % for all $r$ 
    % since every dilation is contained in $E_\delta$, by its definition. We also have  
    this concludes the proof of the first item.
    
    For the second item, take a $\delta$-net $A=\{(\underline{\mathbf x},r)_i\}_{i=1}^{O(N)}$ of $ I_
        1^{s'-1}\times I_2$. Then $N\sim \delta^{-s'}$. We claim
    \begin{equation}\label{inclusion}
        E_\delta^{s'}\subseteq \bigcup_{i=1}^NH_{10d\delta}(\mathbf 0,(\underline{\mathbf x},r)_i).
    \end{equation}
    Indeed, for each $\mathbf y\in E_\delta^{s'}$ there exist $(\underline{\mathbf x}^0,r^0)\in I_1^{s'-1}\times I_2,t_0\in[-1,1]$ such that 
    \begin{equation*}
        |\mathbf y-(\mathbf 0,\underline{\mathbf x}^0)-r^0\gamma(t_0)|\leq \delta. 
    \end{equation*}
    Since $A$ is the $\delta$-net of $I_1^{s'-1}\times I_2$, there is a $(\underline{\mathbf x},r)\in A$ such that $|(\underline{\mathbf x}^0,r^0 )-(\underline{\mathbf x},r)|\leq \delta$,
    then 
    \begin{align*}
        |\mathbf y-(\mathbf 0,\underline{\mathbf x})-r\gamma(t_0)|&\leq |\mathbf y-(\mathbf 0,\underline{\mathbf x}^0)-r^0\gamma(t_0)|+|(\mathbf 0,\underline{\mathbf x}^0)+r^0\gamma(t_0)-(\mathbf 0,\underline{\mathbf x})-r_{0}\gamma(t_0)|\\
        &\leq \delta+ |\underline{\mathbf{x}}^0-\underline{\mathbf x}| + |r-r_0||\gamma(t_0)|\\
        &\leq 2\delta+d\delta\\
        &\leq 10d\delta.
    \end{align*}
    Therefore, by \eqref{inclusion} and $N=O(\delta^{-s'})$, we have 
    \begin{equation*}
        \norm{f}_p^p=\norm{\chi_{E_\delta^{s'}}}_p^p=\mathcal L^{d}(E_\delta^{s'})\leq 
        \begin{cases}
            \sum_{i=1}^N\mathcal L^{d}(H_{10\delta d}(\mathbf 0,(\underline{\mathbf x},r)_i))\lesssim \delta^{s-2},&s=2,...,d,\\
            1,&s=1.
        \end{cases}     
        \end{equation*} 
    In order for \eqref{max1} to be true, therefore
    \begin{equation*}
        1\sim  \norm{\M_\delta^s f}_{L^p(I_1^{s'-1}\times I_2)}\lesssim \delta^{-\alpha}\norm{f}_p\lesssim \begin{cases}
            \delta^{-\alpha+\frac{s-2}{p}},&s=2,...,d,\\
            \delta^{-\alpha},&s=1.
        \end{cases}
    \end{equation*}
    When $p\geq 1$, we need $\alpha\geq \frac{s-2}{p}$ for $s\geq 2$ and $\alpha\geq 0$ for $s=1$. This concludes the proof of Lemma \ref{SharpExponentExample}.
    \end{proof}
\end{example}
\begin{remark}
    Note that in Theorem \ref{MaxEst}, there is an additional arbitrarily small number $\eps$ in the exponent $\alpha(s)$. Up to this $\eps$ loss, the exponent of $\delta$ is sharp.
\end{remark}
\subsection{Sharpness for the dimension estimate \eqref{DimEstInThm2}}
    Our definition for $E^{s'}$ also gives sharpness for the dimension estimate Corollary \ref{dimest} in the sense that 
    \begin{equation*}
        \dim(E^{s'})=\min \{s'+1,d\}.
    \end{equation*}
\begin{proof}
    When $s'=d-1$ and $d$, $\dim(E_\delta^{s'})=d$ trivially holds since $E^{s'}$ is a subset of $\R^d.$ Therefore, we will prove
    \begin{equation*}
         \dim(E^{s'})\leq s'+1,\quad s'=1,...,d-2.
    \end{equation*}
    This is equivalent to say that for any $\eta>0,$
    \begin{equation}\label{DimUpperBound}
        \dim(E^{s'})\leq s'+1+\eta.
    \end{equation}
    % Therefore, it suffices to find a cover $\{E_i\}$ of $E^{s'}$ where $E_i$ is a ball with radius $r_i$ such that 
    %         \begin{equation*}
    %             \sum_i r_i^{^{s'+1+\eta}}<\infty.
    %         \end{equation*}
    
    By the definition of $E^{s'}$ in \eqref{SharpExampleForDimBound}, it can be seen that 
        $\dim(I_
        1^{s'-1}\times I_2)=s'$, hence for any $\delta>0,$ there exists a countable covering $\{B_i\}$ of $I_
        1^{s'-1}\times I_2$, where $B_i$ is a ball with radius $r(B_i)$ such that 
        \begin{equation}\label{CoverInParameterSpace}
            \sum_i r(B_i)^{s'+\eta}<\delta.
        \end{equation}
        Assume the center of $B_i$ is $(\underline{\mathbf x}^i,r^i)$, 
        % for any $(\underline{\mathbf x},r)\in B_i$, 
        it can be verified that 
        \begin{equation*}
           \bigcup_{(\underline{\mathbf x},r)\in B_i} H(\mathbf 0,\underline{\mathbf x},r)\subseteq H(\mathbf 0,\underline{\mathbf x}^i,r^i)+B(\mathbf 0,Cr(B_i))\subseteq \R^d,
        \end{equation*}
        where $C$ depends on the standard moment curve $\gamma$ and dimension. It suffices to cover the right hand side. Note that the right hand side is mortally a $Cr(B_i)$ neighborhood of the moment curve. Therefore, we can use $Cr^i/r(B_i)$ many balls with the same radius $r(B_i)$ to cover it. Denote these balls as $\{B_i^j\}_j^{O(\frac{r^i}{r(B_i)})}.$
        
        Since 
        \begin{equation*}
            E^{s'}\subseteq \bigcup_{i}\bigcup_{(\underline{\mathbf x},r)\in B_i} H(\mathbf 0,\underline{\mathbf x},r)\subseteq \bigcup_i H(\mathbf 0,\underline{\mathbf x}^i,r^i)+B(\mathbf 0,Cr(B_i)),
        \end{equation*}
        $E^{s'}$ can be covered by all these balls $\{B_i^j\}_{i,j}$ where $B_i^j,j=1,...,O(\frac{r^i}{r(B_i)})$ has the same radius $r(B_i)$ and   
        \begin{equation*}
            \sum _{i,j}r(B_i^j)^{s'+\eta+1}=\sum_{i}r(B_i)^{s'+\eta+1}\times \frac{Cr^i}{r(B_i)}\lesssim\sum_i r(B_i)^{s'+\eta}\lesssim\delta<\infty.
        \end{equation*}
    In the computations, we used $r^i\leq 2$ and \eqref{CoverInParameterSpace}. By Remark \ref{HausdorffCotentAndDim}, \eqref{DimUpperBound} holds.
\end{proof}
\begin{remark}
    As mentioned in \cite{HKLO},
    the argument works for general $s'\in (0,d-1].$
\end{remark}

\subsection{Optimal range of $p$}\label{SharpRange}

Now, we start to test other examples that will impose constraints on $p$. In this subsection, we will use our computations in section \ref{sectangency}. For the convenience, we record a special case of equation \eqref{conditions} where $\mathbf x'=\mathbf 0$ and $r'=1$. The conditions become for all $i=2,...,d,$
\begin{equation}\label{0case}
    \Delta_i=x_i(r-1)^{i-1}-x_1^i=0 \mathrm{~and~} t=\frac{x_1}{1-r}\in[-1,1]
\end{equation} 
\begin{example}[Focusing example]\label{FocForHel}
We will prove the following 
\begin{lemma}
    \begin{enumerate}
        \item For maximal estimate \eqref{max1}, $s=d$, the sharp range of $p$ is $p\geq 3$. 
        \item For maximal estimates \eqref{max1}, $s=1,...,d-1$ to be true, $p\geq 2s$.
    \end{enumerate}
\end{lemma}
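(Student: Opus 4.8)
My plan is to construct, for each of the two claims, an explicit ``focusing'' configuration: a family of moment curves $H(\mathbf x,r)$, parametrized by $(\underline{\mathbf x},r)$ running over a subset of $I_1^{s'-1}\times I_2$ of full measure, all of which are tangent (or almost tangent) to a single fixed moment curve, say $H(\mathbf 0,1)$, at a common point. Taking $f=\chi_{H_\delta(\mathbf 0,1)}$, so that $\|f\|_p^p\approx\delta^{d-1}$, I compute $\M^s_\delta f(\underline{\mathbf x},r)$ from below by estimating $\mathcal L^d\big(H_\delta(\mathbf x,r)\cap H_\delta(\mathbf 0,1)\big)$. Proposition \ref{exacttangencyvolume} tells us that when two ``very different'' moment curves ($\overline d\gtrsim 1$) are exactly tangent, this intersection volume is $\sim\delta^{d-1/2}$, so the averaged quantity is $\gtrsim\delta^{d-1/2}/\delta^{d-1}=\delta^{1/2}$; if instead the curves are only forced to pass near the fixed one (not tangent) the gain is a full power, $\gtrsim\delta$, and if they are tangent but \emph{not} ``very different'' one must track $\overline d$ and $\overline\Delta$ via Proposition \ref{prointersectionvolum}. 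Plugging these bounds into \eqref{max1} and computing $\|\M^s_\delta f\|_{L^p}$ over the parameter set where the tangency holds yields a constraint of the form $\delta^{(\text{power})}\lesssim\delta^{-\alpha}\delta^{(d-1)/p}$; letting $\delta\to0$ forces the claimed lower bound on $p$ (using the already-established value of $\alpha(s)$ from Theorem \ref{MaxEst}, or, for a clean statement, noting the estimate simply cannot hold below that range).

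For part (1), $s=d$, so $s'=1$ and the maximal function is a function of $r\in I_2$ alone. Here I use the condition \eqref{0case}: given $r$, solve $x_1$ freely subject to $t=x_1/(1-r)\in[-1,1]$ and then set $x_i=x_1^i/(r-1)^{i-1}$ for $i\ge2$; this produces, for (almost) every $r\in[1/2,2]$, a moment curve $H(\mathbf x(r),r)$ exactly tangent to $H(\mathbf 0,1)$ at a \emph{single fixed} point (the tangency point $\mathbf x+r\gamma(t)$; I should pick the free parameter so this point is the same for all $r$ — e.g. fix $t$ and let $x_1=(1-r)t$). One checks $\overline d((\mathbf x(r),r),(\mathbf 0,1))\sim|r-1|\gtrsim1$ away from $r$ near $1$, so Proposition \ref{exacttangencyvolume}(1) applies and $\M^d_\delta f(r)\gtrsim\delta^{1/2}$ on a set of $r$'s of measure $\approx1$. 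Then $\|\M^d_\delta f\|_{L^p([1/2,2])}\gtrsim\delta^{1/2}$ while $\delta^{-\alpha(d)}\|f\|_p=\delta^{-\eps-(d-2)/p}\delta^{(d-1)/p}=\delta^{-\eps+1/p}$, forcing $\tfrac12\ge\tfrac1p$, i.e. $p\ge2$ — but to get the sharp $p\ge3$ I will need a \emph{thicker} focusing example that exploits that many tangent curves overlap on a common $\sqrt\delta\times\delta^{d-1}$ slab: replacing $f$ by the indicator of a single such slab and counting how many of the curves $H(\mathbf x(r),r)$ contain it (this is the standard Besicovitch/Wolff-type computation behind Bourgain's $p\ge3$ threshold in the circular maximal theorem, cf.\ the $s'=d$ remark).

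For part (2), $1\le s\le d-1$, so now $s'=d+1-s\ge2$ and we get $s'-1\ge1$ genuine parameters $\underline{\mathbf x}=(x_{s+1},\dots,x_d)$ left over. The idea is the same but I only get to choose the first $s$ coordinates $\overline{\mathbf x}$ in the supremum; the constraint \eqref{0case} must then be arranged so that for each choice of $(\underline{\mathbf x},r)$ in a full-measure subset, some $\overline{\mathbf x}\in\R^s$ makes $H(\overline{\mathbf x},\underline{\mathbf x},r)$ tangent to $H(\mathbf 0,1)$ — I expect to set up $s$ of the $\Delta_i=0$ equations ($i=2,\dots,s+1$, say) to be solved by the free variables $x_2,\dots,x_s$ together with $x_1$, while the remaining coordinates $x_{s+1},\dots,x_d$ range over a product of intervals of size $\sim1$ and automatically satisfy the leftover equations for suitably chosen geometry (or one restricts to the subvariety where they do, which still has the right dimension). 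On that set $\M^s_\delta f\gtrsim\delta^{1/2}$, so $\delta^{1/2}\cdot\mathcal L^{s'}(\text{param set})^{1/p}\lesssim\delta^{-\alpha(s)}\delta^{(d-1)/p}$, and since $\alpha(s)=\eps+(s-2)/p$ the $\delta$-powers give $\tfrac12-\eps\le \tfrac{s-2}{p}\cdot(-1)\cdot(-1)\dots$ — more carefully, I'll match exponents to read off $p\ge2s$. The main obstacle I anticipate is the bookkeeping in part (2): ensuring that the focusing family genuinely occupies an $s'$-dimensional (not lower-dimensional) subset of parameter space \emph{and} that every curve in it is ``very different'' ($\overline d\gtrsim1$) from the fixed curve, so that Proposition \ref{exacttangencyvolume}(1) — rather than the weaker Proposition \ref{prointersectionvolum} bound — is available; getting the sharp constant $2s$ rather than something lossy depends on that dichotomy being exactly saturated. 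For part (1) the main obstacle is instead producing the thick slab overlap count cleanly enough to reach $p\ge3$ rather than merely $p\ge2$.
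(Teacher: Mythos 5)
Your plan for item (1) is essentially the paper's argument: the paper takes $f=\chi_R$ with $R=H_\delta(\mathbf 0,1)\cap B(\mathbf 0,(100d)^d\sqrt\delta)$, notes that for every $r\in[5/4,7/4]$ the curve $H(\mathbf 0,r)$ is exactly tangent to $H(\mathbf 0,1)$ at the origin so that, by Proposition \ref{exacttangencyvolume}, $\M^d_\delta f(r)\gtrsim\delta^{1/2}$ on a unit-measure set of $r$, while $\norm{f}_p^p\sim\delta^{d-\frac12}$; comparing exponents in \eqref{max1} gives $\tfrac12\geq\tfrac{3}{2p}-\eps$, i.e.\ $p\geq3$. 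That is exactly your ``slab'' example, so item (1) only needs the final computation written out.

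The genuine gap is in item (2). The tangency conditions \eqref{0case} are rigid: once $x_1$ and $r$ are fixed, every coordinate $x_i$, $i=2,\dots,d$, is forced to equal $x_1^i/(r-1)^{i-1}$. Hence, for a fixed $(\underline{\mathbf x},r)=(x_{s+1},\dots,x_d,r)$, asking for some $\overline{\mathbf x}\in\R^s$ making $H(\overline{\mathbf x},\underline{\mathbf x},r)$ exactly tangent to $H(\mathbf 0,1)$ leaves only the single scalar $x_1$ to satisfy the $d-s$ equations with $i\geq s+1$; this imposes $d-s-1$ nontrivial constraints on $(x_{s+1},\dots,x_{d-1})$, so the exact-tangency family fills only a $2$-dimensional slice of the $s'$-dimensional parameter space (essentially parametrized by $(x_d,r)$). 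Your claim that the leftover coordinates can ``range over a product of intervals of size $\sim1$ and automatically satisfy the leftover equations,'' and the fallback that the subvariety ``still has the right dimension,'' are therefore false whenever $s<d-1$; with a parameter set of $\mathcal L^{s'}$-measure zero your $L^p$ lower bound collapses, and if the set could be taken of unit measure the exponent count would give $p\geq 2(d-1)$, not $p\geq 2s$. The missing ingredient, which the paper supplies, is the stability statement Proposition \ref{exacttangencyvolume}(2): the intersection volume $\sim\delta^{d-\frac12}$ survives $O(\delta)$ perturbations of the determined coordinates $x_{s+1},\dots,x_{d-1}$. This thickens the $2$-dimensional tangency slice into a parameter set of measure $\sim\delta^{d-1-s}$ on which $\M^s_\delta\chi_{H_\delta(\mathbf 0,1)}\sim\delta^{1/2}$, so $\norm{\M^s_\delta f}_p\gtrsim\delta^{\frac12+\frac{d-1-s}{p}}$ against $\norm{f}_p=\delta^{\frac{d-1}{p}}$, and it is precisely the extra factor $\delta^{\frac{d-1-s}{p}}$ that turns the comparison into $\tfrac12\geq\tfrac{s}{p}$, i.e.\ $p\geq 2s$. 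Without this thickening (and with your exponent bookkeeping left unfinished) the proposal does not reach the stated threshold.
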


\begin{proof}

\textit{Item (1).} Choose $f=\chi_{R}$, where $R=H_{\delta}(\mathbf 0,1)\bigcap B(\mathbf 0, (100d)^d\sqrt \delta)$.

For all $r \in [1/2,2]\setminus \{1\}$ 
% in some subset of $[1/2,2]$ with positive measure
, the system \eqref{0case} which is a special case of \eqref{conditions} have a solution $\mathbf x_0=(x_1,...,x_d)$. This means for $r\in [5/4,7/4]$, we can find an appropriate moment curve $H(\mathbf x_0,r)$ such that it is tangent to $H(\mathbf 0,1)$. In fact, $\mathbf x_0=\mathbf 0$ and the point of tangency is $(0,...,0)$. By Proposition \ref{exacttangencyvolume}, we know the intersection contains a $\sqrt \delta\times \delta
\times...\times \delta$ rectangle hence 
\begin{equation*}
    \mathcal L^{d-1}(R\bigcap H_\delta(\mathbf 0,r))\sim   \mathcal L^{d-1}(H_\delta(\mathbf 0,1)\bigcap H_\delta(\mathbf 0,r))\sim \delta^{d-\frac{1}{2}}.
\end{equation*}

Note that our radii are in $[5/4,7/4]$ which means $|r-1|\gtrsim 1$. This implies, by \eqref{distanceinparaspace} in the definition, that $dist_p((\mathbf 0,1),(\mathbf x_0,r))\gtrsim 1$.

As a result, we have
\begin{align*}
    \M^1_\delta f(r)&=\sup_{\mathbf x\in\R^d}\frac{1}{\mathcal L^d( H_\delta(\mathbf x,r))}\int_{H_\delta(\mathbf x,r)}|f|\\
    &\gtrsim \frac{1}{\mathcal L^d( H_\delta(\mathbf 0,1))} \delta^{d-\frac{1}{2}} \sim \delta^{d-\frac{1}{2}-(d-1)}=\delta^{\frac{1}{2}}.
\end{align*}
So $\norm{\M^2_\delta f}_p\gtrsim \delta^{\frac{1}{2}}$, while in this case, we have $\norm{f}^p_p= \mathcal L^d(R)\sim \delta^{d-\frac{1}{2}}$.

In order to use $\delta^{-(\frac{d-2}{p}+\eps)}\norm{f}_p=\delta^{(d-\frac{1}{2}-(d-2))/p-\eps}=\delta^{\frac{3}{2p}-\eps}$ to bound $\norm{ \M^1_\delta f}_p\sim \delta^{\frac{1}{2}}$, we need $p\geq 3.$

\noindent\textit{Item (2). } Recall that the maximal function is a function of $(x_{s},...,x_d,r)$. For all $(x_d,r)\in [\eps_1,\eps_2]\times [1-\eta_1,1-\eta_2]$ where $0<\eps_1<\eps_2$ and $\eta_1>\eta_2>0$ to be determined, from the last equation of \eqref{0case}, we have 
\begin{equation*}
    x_1=x_d^{1/d}(r-1)^{(d-1)/d}\leq \eps_2^{1/d}\eta_1^{(d-1)/d}.
\end{equation*}
Once $x_1$ is determined, from other equations, we have for $i=2,3,...,d-1$
\begin{equation*}
    x_i=\frac{x_1^i}{(r-1)^{i-1}}\leq \frac{(\eps_2^{1/d}\eta_1^{(d-1)/d})^i}{\eta_2^{i-1}}\leq \frac{(\eps_2^{1/d}\eta_1^{(d-1)/d})^2}{\eta_2^{d-2}}.
\end{equation*}
We also need 
\begin{equation*}
    t=\frac{x_1}{r-1}\leq \frac{\eps_2^{1/d}\eta_1^{(d-1)/d}}{\eta_2}
\end{equation*}
As long as $\eps_1,\eps_2,\eta_1,\eta_2$ satisfy 
\begin{align*}
    \eps_2^{1/d}\eta_1^{(d-1)/d}&\leq 1/2<1,\\ \frac{(\eps_2^{1/d}\eta_1^{(d-1)/d})^2}{\eta_2^{d-2}}&\leq 1/8<1,\\ \frac{\eps_2^{1/d}\eta_1^{(d-1)/d}}{\eta_2}&\leq (1/8)^{1/(d-2)}<1,
\end{align*}
then for each $(x_d,r)\in [\eps_1,\eps_2]\times [1-\eta_1,1-\eta_2]$, we can find $H(x_1,...,x_d,r)$ such that it is tangent to $H(\mathbf 0,1)$. In fact, if the second inequality holds then the third one must hold. We can choose $\eta_2=\frac{1}{8}$ and $\eta_1=\frac{1}{4}$ and make $\eps_2$ sufficiently small to satisfy the first two inequalities.

Combining this fact with the second part of Proposition \ref{exacttangencyvolume}, we obtain that for each $(x_d,r)\in [\eps_1,\eps_2]\times [1-\eta_1,1-\eta_2]$, and associated $x_i,i=s+1,...,d-1$, as long as $x_i': |x_i'-x_i|\leq \frac{\delta}{(100d)^d}$, then we have 
\begin{equation}\label{ttt}
    \mathcal L^{d}(H_\delta(x_1,...,x_s,x_{s+1}',...,x'_{d-1},x_d,r)\bigcap H_\delta(\mathbf 0,1))\sim \delta^{d-\frac{1}{2}}.
\end{equation}
If we choose $f=\chi_{H_\delta(\mathbf 0,1)}$, then similar computation using \eqref{ttt} implies
\begin{equation*}
    \M_\delta^sf(x_{s+1}',...,x'_{d-1},x_d,r)\sim \delta^{1/2}
\end{equation*}
and 
\begin{equation*}
    \norm{\M_\delta^sf}_p\sim \delta^{1/2}\delta^{\frac{d-1-s}{p}} = \delta^{\frac{d-1-s}{p}+1/2}.
\end{equation*}
In order to use $\norm{f}_p=\delta^{\frac{d-1}{p}}$ to bound $\norm{\M_\delta^s f}_p\sim \delta^{\frac{d-1-s}{p}+1/2}$, we need $p\geq 2s.$
\end{proof}

\end{example}
\begin{remark}
    The example may not be the sharp example for all $s=1,...,d-1$. In particular, for $s=1$, it is conjectured the sharp range is $p> d.$
\end{remark}

\section{Proof of Theorem \ref{MaxEst} when $s=d$}\label{ProjectionArgument} 
In this section, we  will prove the maximal estimate \eqref{max1}, $s=d$.
% Before giving a short proof of the maximal estimate for $\M_\delta^1$, w
We first introduce Wolff's seminal result. In \cite{Wolff1997}, he proved the following sharp estimate:
 \begin{theoremA}[Wolff's circular maximal theorem]
     Denote by $C_\delta(x_1,x_2,r)$ the $\delta$- neighborhood of a circle in the plane centered at $(x_1,x_2)\in \R^2$ with radius $r\in [1/2,2]$. Recall Wolff's circular maximal operator is defined as 
     \begin{equation*}
         W_\delta f(r):=\sup_{(x_1,x_2)\in \R^2}\frac{1}{\delta}\int_{C_\delta(x_1,x_2,r)}|f(y)|dy.
     \end{equation*}
     Then for all $\eps>0$ and $p\geq 3$, there is a constant $C_\eps$ such that 
     \begin{equation}\label{WolffCircular3to3}
         \norm{W_\delta f}_{L^p([1/2,2])}\leq C_\eps\delta^{-\eps}\norm{f}_{L^p(\R^3)}.
     \end{equation}
 \end{theoremA}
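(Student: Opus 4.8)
The plan is to prove \eqref{WolffCircular3to3} along the lines of Wolff's original combinatorial argument, for which the planar prototype of the tangency analysis of Section \ref{sectangency} (the Kolasa--Wolff lemma) is exactly the geometric input; at the end I indicate a softer route via local smoothing. First I would linearize and discretize the operator: by pigeonholing over dyadic heights $\lambda\in(\delta,1]$ and the standard restricted-weak-type formalism, it suffices to show that whenever $E\subseteq B(0,10)\subseteq\R^2$, $0<\lambda\le1$, and $F\subseteq[1,2]$ is a $\delta$-separated set of radii such that for each $r\in F$ there is a centre $x(r)$ with $|C_\delta(x(r),r)\cap E|\ge\lambda\delta$, then
\begin{equation*}
    \#F\ \lesssim_\eps\ \delta^{-\eps}\,\lambda^{-3}\,\delta^{-1}\,|E|.
\end{equation*}
Summing over the dyadic $\lambda$ and interpolating the resulting $L^3$ bound with the trivial estimate $\norm{W_\delta f}_\infty\le\norm{f}_\infty$ --- together with Bourgain's uniform $L^p$, $p>2$, bound for the circular maximal function \cite{bourgain1984spherical} to reach the endpoint $p=3$ --- recovers \eqref{WolffCircular3to3} for all $p\ge3$.

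For the geometric core write $C_j:=C_\delta(x(r_j),r_j)$ for $r_j\in F$ and $\mu=\sum_j\chi_{C_j}$. Double counting gives $\int_E\mu\gtrsim\#F\cdot\lambda\delta$, so Cauchy--Schwarz yields $(\#F\,\lambda\delta)^2\lesssim|E|\sum_{i,j}|C_i\cap C_j|$. The diagonal contributes $\sum_j|C_j|\sim\#F\,\delta$; the off-diagonal is handled by the two--annulus intersection bound of Kolasa--Wolff \cite{Kolasa1999OnSV} --- the planar prototype of Proposition \ref{prointersectionvolum} --- namely $|C_i\cap C_j|\lesssim\delta^2/\sqrt{(\delta+\overline\Delta_{ij})(\delta+\overline d_{ij})}$, where $\overline d_{ij}$ is the distance between the two parameter triples and $\overline\Delta_{ij}$ the normalised tangency parameter. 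Since the annuli form a curved three-parameter family, for fixed $i$ and dyadic $\overline\Delta_{ij}\sim\mu$, $\overline d_{ij}\sim d$ the tangency condition cuts a slab of thickness $\sim\mu$ out of the $d$-ball in parameter space, so there are $\lesssim d^2\mu\,\delta^{-3}$ admissible $j$; feeding this into the intersection bound and summing the geometric series over dyadic $\mu,d\le1$ gives $\sum_{i\ne j}|C_i\cap C_j|\lesssim\#F\,\delta^{-1}$, hence the crude estimate $\#F\lesssim\lambda^{-2}\delta^{-3}|E|$.

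This crude estimate is off from the target by as much as a power $\delta^{-2}$, and recovering that loss is precisely the substance of Wolff's argument. The remedy is a \emph{two-ends reduction}: after a further pigeonholing one may assume that each set $C_j\cap E$ is not concentrated in any sub-arc of length $\le\rho$, with $\rho$ to be optimised. Since the extremal tangential intersection $C_i\cap C_j$ is supported in a $\sqrt\delta$-arc, the two-ends hypothesis improves $|C_i\cap C_j\cap E|$ by a power of $\sqrt\delta/\rho$ precisely for the tangent pairs that dominate the count above; balancing this gain against the loss from localising to a single arc-scale, and iterating the estimate from scale $\delta$ up to scale $\sqrt\delta$ (an induction on scales --- or, in Schlag's reformulation, a Szemer\'edi--Trotter-type incidence bound for the dual family of curves), closes the inequality with only an $\eps$-loss. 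This two-ends-plus-induction step is the heart of the proof and the part I expect to require the most care; everything preceding it is bookkeeping resting on Section \ref{sectangency}.

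Finally, one can bypass the combinatorics altogether: \eqref{WolffCircular3to3} also follows from the sharp $L^p$ local smoothing estimate for the $2{+}1$-dimensional wave equation due to Guth--Wang--Zhang \cite{GuthWangZhang}, via the same Sobolev-embedding argument used in Section \ref{SobolevEmbedding}. As \eqref{WolffCircular3to3} enters here only as a black box for the case $s=d$ of Theorem \ref{MaxEst}, either derivation suffices.
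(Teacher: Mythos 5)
The paper does not prove Theorem A at all: it is imported as a black box from Wolff \cite{Wolff1997}, and the version actually used later (parabolas in place of circles) is justified by citing the cinematic-curvature generalizations \cite{Schlag2003,Zahl2012,pyz,zahl2025maximalfunctionsassociatedfamilies}. So what you are proposing is a re-derivation of a deep external theorem, and as a proof it has a genuine gap: the outline you give (restricted weak type and $\delta$-discretization, the Kolasa--Wolff tangency bound as in Section \ref{sectangency}, Cauchy--Schwarz, then a two-ends reduction with induction on scales or Schlag-type incidence counting) is the correct skeleton of Wolff's argument, but the decisive step --- the two-ends reduction combined with the multi-scale/incidence argument that upgrades the crude $L^2$-type count to the $\lambda^{-3}\delta^{-1}$ bound with only an $\eps$-loss --- is only named, not carried out. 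That step is essentially the entire content of \cite{Wolff1997}, so the proposal is a plan rather than a proof. Two secondary points in the part you do write out: after linearization there is only one circle per $\delta$-separated radius (the centre is a function of $r$), so the count ``$\lesssim d^2\mu\,\delta^{-3}$ admissible $j$'' --- which treats the annuli as a full $\delta$-separated three-parameter family --- is not justified as stated, and the resulting crude bound $\#F\lesssim\lambda^{-2}\delta^{-3}|E|$ has the wrong numerology; these are harmless only because that paragraph is a warm-up.

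The fallback in your last paragraph does not work on the stated range. Estimating the supremum over the two centre variables by Bernstein costs $2^{2k/p}$ per dyadic frequency $2^k$, while the circular averaging operator with sharp local smoothing (Guth--Wang--Zhang) gains $2^{-k(1/2)}\cdot 2^{k\sigma}$ with $\sigma>\frac12-\frac2p$ only for $p\geq 4$; the two exponents cancel exactly when $p\geq 4$, and for $3\leq p<4$ the required gain exceeds what sharp local smoothing can provide. This is precisely the point of the paper's remark that the local smoothing route ``cannot provide sharp range maximal estimate even with the sharp range local smoothing by Guth, Wang, and Zhang,'' and it is why the paper invokes Wolff's theorem (and its cinematic extensions) as a black box for the $s=d$ case instead of running the Section \ref{SobolevEmbedding} argument in the plane. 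So the alternative derivation cannot substitute for the combinatorial proof of \eqref{WolffCircular3to3} with $p\geq 3$; if you want a self-contained treatment you must actually execute the two-ends/induction-on-scales step, or simply cite \cite{Wolff1997} as the paper does.
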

 % This can be used to deduce the full dimension conclusion via a standard process, see section \ref{Fthm2toThm1}.
 In our case, the two dimensional moment curve is parabola which means this result for circles cannot be applied directly. 
 
 However, combining with Schlag's result \cite{Schlag2003}, Zahl \cite{Zahl2012} generalizes this result to the family of curves satisfying the so-called Sogge's cinematic curvature condition. Additionally, in \cite{pyz} and \cite{zahl2025maximalfunctionsassociatedfamilies}, the authors prove the same sharp estimate under a condition that may be slightly rigorously weaker than Sogge's original definition. In particular, in the appendix of \cite{zahl2025maximalfunctionsassociatedfamilies}, the author verifies that parabolas parametrized by vertex $(x_1,x_2)$ and dilation $r$, are indeed a family of curves that satisfies the cinematic curvature condition. So we  will assume the above Theorem A holds with circle $C$ replaced by parabola $P$. Similar to the circle, We also denote as $P_\delta(x_1,_2,r)$ the $\delta$-neighborhood of parabola $P(x_1,x_2,r)$ with vertex $(x_1,x_2)$ and dilate factor $r$.
 
By applying this planar sharp result, we can prove Theorem \ref{MaxEst} for $s=d$.
\begin{proof}[Proof of the boundedness of $\M_\delta^d$]
Assume $f\geq 0$ and denote $\mathbf x=(x_1,...x_d)=(\overline{\mathbf x}_2,\underline{\mathbf x}_{d-2})=(\overline{\mathbf x},\underline{\mathbf x}).$ If we write 
\begin{equation*}
      \M_\delta^1f(r)=\sup_{\mathbf x\in \R^d}\frac{1}{\mathcal L^d(H_\delta(\mathbf x,r))}\int_{H_\delta(\mathbf x,r)}f(\mathbf s)d\mathbf s=\sup_{\mathbf x\in \R^d}\mathcal A_\delta f(\mathbf x,r),
\end{equation*}
where $\mathbf s=(s_1,...,s_d)=(\overline{\mathbf s},\underline{\mathbf s})$ and
\begin{equation}\label{RoughAveragingOperator}
    \mathcal A_\delta f(\mathbf x,r)=\frac{1}{\mathcal L^d(H_\delta(\mathbf x,r))}\int_{H_\delta(\mathbf x,r)}f
\end{equation} is the averaging operator associated to moment curves. Using Fubini's theorem, we express $\mathcal A_\delta $ as an iterated integral involving the projection of $H_\delta(\mathbf x,r)$ in the first two coordinates $\overline{\mathbf x}_2=\overline{\mathbf x}$. Recall that in \eqref{p=pi}, we proved that

    \begin{equation*}
        P_\delta = \pi_2(H_\delta(\mathbf x,r)),
    \end{equation*}
    where 
    \begin{equation*}
P(x_1,x_2,r)=\{(x_1,x_2)+r(t,t^2):t\in[-1,1]\}.
     \end{equation*}
    In the following computations, $C$ is a constant that depends only on dimension and may change from line to line. For any fixed $\mathbf x,$
\begin{align*}
    \mathcal A_\delta f(\mathbf x,r)&\leq \frac{C}{\delta^{d-1}}\int_{H_\delta(\mathbf x,r)}f(\mathbf s)d\mathbf s\\
    &=\frac{C}{\delta}\int_{P_\delta(\overline{\mathbf x},r)}\left( \frac{1}{\delta^{d-2}}\int_{\pi_2^{-1}(\overline{\mathbf s})\cap H_\delta(\mathbf x,r)} f(\mathbf s) d\underline{\mathbf s}\right)d\overline{\mathbf s}\\
    &=\frac{C}{\delta}\int_{P_\delta(\overline{\mathbf x},r)}\left( \frac{1}{\delta^{d-2}}\int_{\pi_2^{-1}(\overline{\mathbf s})\cap H_\delta(\mathbf x,r)} F_{\overline{\mathbf s}}(\underline{\mathbf s}) d\underline{\mathbf s}\right)d\overline{\mathbf s},
\end{align*}
where $F_{\overline{\mathbf s}}(\underline{\mathbf s}):=f(\overline{\mathbf s},\underline{\mathbf s})=f(\mathbf s)$. We estimate the inner integral via H\"older's inequality. Note that for any fixed $\overline{\mathbf s},\pi_2^{-1}(\overline{\mathbf s})\cap H_\delta(\mathbf x,r)$ is a $\delta$-ball in $\R^{d-2}$. So 
\begin{align*}
     \mathcal A_\delta f(\mathbf x,r)&\leq \frac{C}{\delta}\int_{C_\delta(\overline{\mathbf x},r)}\left( \norm{F_{\overline{\mathbf s}}}_{L^p(\R^{d-2})}\delta^{-\frac{d-2}{p}} \right)d\overline{\mathbf s}\\
     &=\frac{C}{\delta}\int_{C_\delta(\overline{\mathbf x},r)} G(\overline{\mathbf s}) \delta^{-\frac{d-2}{p}} d\overline{\mathbf s},
\end{align*}
where $G(\overline{\mathbf s}):=\norm{F_{\overline{\mathbf s}}}_{L^p(\R^{d-2})}.$
% Then applying parabola version estimate \eqref{WolffCircular3to3}, we get 
Take supremum over $\overline{\mathbf x}$ in the right hand side,
\begin{align*}
      \mathcal A_\delta f(\mathbf x,r)\leq C\delta^{-\frac{d-2}{p}}W_\delta G(r),
\end{align*}
Since the right hand side is a uniform upper bound independent of $\mathbf x$, take supremum over $\mathbf x$, 
\begin{equation*}
     \M_\delta^df(r)\leq C\delta^{-\frac{d-2}{p}}W_\delta G(r).
\end{equation*}
Then applying the parabola version estimate \eqref{WolffCircular3to3}, we obtain 
\begin{align*}
     \norm{\M^d_\delta f}_{L^p([1/2,2])}&\leq C\delta^{-\frac{d-2}{p}}\norm{W_\delta G}_{L^p([1/2,2])}\\
     &\leq C_\eps\delta^{-(\frac{d-2}{p}+\eps)}\norm{G}_{L^p(\R^2)}\\
     &\leq C_\eps\delta^{-(\frac{d-2}{p}+\eps)}\norm{f}_{L^p(\R^d)}
\end{align*}
which concludes the proof. 
\end{proof}
\begin{remark}
    Note that the above argument can be used to tackle any family of spatial curves whose projection forms a family of planar cinematic curves.  
\end{remark}

\section{Proof of Theorem \ref{MaxEst} when $s=1,...,d-1$} \label{SobolevEmbedding}
In \cite{Ko_Lee_Oh_2023}, the authors Ko, Lee and Oh have established a local smoothing estimate for maximal averages associated with non-degenerate curves in $\R^d$. The connection between maximal estimate and local smoothing estimate is well-known in the harmonic analysis literature. We record the relevant result from \cite{Ko_Lee_Oh_2023} in Theorem B below and use it to derive the $L^p$ bound \eqref{max1} for $s=1,...,d-1$.

% Let us first recall the second item of Theorem \ref{MaxEst}. Proving this result is the main objective of this section.
% % For technical reason, we will prove the following slightly stronger argument.
% \begin{theorem2}[second item of Theorem \ref{MaxEst}]\label{thm2ii} Fix $d\geq 3 $ and $ \eps>0$, then $\forall p\geq 4d-2,$ $\exists C_\eps>0$ depending on all the aforementioned quantities such that
%     \begin{equation}\label{max22}
%         \norm{\M^2_\delta f}_{L^p(I_1^{d-2}\times I_2)}\leq C_\eps\delta^{-\eps}\norm{f}_{L^p(\R^d)}.
%     \end{equation}
%     More precisely, 
%     \begin{equation*}
%         \left(\int_{1/2}^2\int_{[1/4,1/4]^{d-2}}|\M_\delta^s f(\underline{\mathbf x},r)|^pd\underline{\mathbf x}dr\right)^{\frac{1}{p}}\leq C_\eps \delta^{-\eps}\norm{f}_{L^p}(\R^d).
%     \end{equation*}
% \end{theorem2}
% Note that $I_1=[-1/4,1/4]$ and $I_2=[1/2,2].$

% % We will finish the proof via the following subsections. 
% Each of the following subsections describes a step of the proof of Theorem 2.
% By replacing $f$ by $|f|$, we can always assume $f\geq0.$ Furthermore, since the space of compactly supported functions, $C^\infty_c(\R^d)$ is a dense subspace of $L^p$ space. So we also assume $f$ is compactly supported.

Each of the following subsections describes a step of the proof of Theorem \ref{MaxEst} for $s=1,...,d-1$.
By replacing $f$ by $|f|$, we can always assume $f\geq0.$ Since the space of compactly supported functions, $C^\infty_c(\R^d)$ is a dense subspace of $L^p$ space. We also assume $f$ is compactly supported.

\subsection{Reduction to a smooth averaging operator}

As a first step, we bound $\M_\delta^s$ by a smooth version of it. Let us introduce the smooth cut-off functions we need. Their existence is justified in the appendix \ref{existenceofcutoff}. Denote $\tilde{\mathbf t}=(t_2,...,t_d)$ and assume $\psi(\tilde{\boldsymbol{t}}):\R^{d-1}\to \R$ is a nonnegative smooth function that satisfies that 
\begin{enumerate}
    \item $\hat{\psi}$ is compactly supported and nonnegative.
    \item $\int_{\R^{d}} \psi>0$.
    \item $\psi(\tilde{\boldsymbol{t}})\geq 1$ for $\tilde{\boldsymbol{t}}\in B(\mathbf 0,C_d)$ for some $C_d>0$ to be determined later.
\end{enumerate}
As conventions, $\psi_\delta(\cdot)=\frac{1}{\delta^{d-1}}\psi(\frac{\cdot}{\delta})$ and one can compute that $\int_{\R^{d-1}} \psi_\delta=\int_{\R^{d-1}} \psi=c$.
This function is used to tackle the $\delta$-thickening in physical space and smoothly cut off the frequency space.

Assume $\chi_1(r):\R\to \R$ is a non-negative function that is
% with compact Fourier transform and 
compactly supported in $[1/4,3]$ and $\chi_1(r)=1 $ for $r\in[1/2,2].$
This function restricts the radius to the finite interval $[1/2,2]$.

Assume $\chi_0(u):\R\to \R$ is a non-negative function that is
% with compact Fourier transform and 
compactly supported in $[-2,2]$ and $\chi_0(u)=1 $ for $u\in [-1.5,1.5]$. This function smoothly cuts off the moment curves. With these notations, we define the following smoothed-out averaging operator associated to moment curves, 
\begin{align*}
\mathfrak{A}_\delta f(&\mathbf x,r)\\:=
    &\chi_1(r)\int_{\R^d}f( x_1-ru,x_2-ru^2-t_2,...,x_d-ru^d-t_d) \psi_{\delta}(\tilde{\mathbf t})d\tilde{\mathbf t}\chi_0(u)du\\
    =&\chi_1(r)\int_{\R^d}f(\mathbf x-r\gamma(u)-(0,\tilde{\boldsymbol{t}}))\psi_{\delta}(\tilde{\boldsymbol{t}})d\tilde{\boldsymbol{t}}\chi_0(u)du.
\end{align*}
As we mentioned before, $(t_2,...,t_d)$ and $\psi_\delta$ are used to take the average over the $\delta$-neighborhood of the moment curves. $\chi_0$ is used to limit the parameter of the moment curve roughly in $[-1,1]$. For $r\in [1/2,2]$, $\chi_1$ makes the average a smooth function of radius $r$. 

From the definition, we trivially have
\begin{align*}
   1 _{[1/2,2]}(r)&\leq \chi_1(r)\\
   1_{[-1,1]}(u)&\leq \chi_0(u),
\end{align*}
where $1_E$ is the characteristic function of set $E$.
Combining these facts with our carefully chosen $\psi$, recalling the definition of averaging operator \eqref{RoughAveragingOperator} we claim \begin{equation}\label{smoothdomination}
\begin{aligned}
     \mathcal A_\delta f(\mathbf x,r)&\leq  \chi_1(r)\int_{\R^d}f(\mathbf x-r\gamma(u)-(0,\tilde{\boldsymbol{t}}))\psi_{\delta}(\tilde{\boldsymbol{t}})d\tilde{\boldsymbol{t}}\chi_0(u)du\\
     &= \mathfrak{A}_\delta  f(\mathbf x,r).
\end{aligned}
\end{equation}
% The rough cut-off can be bounded up to some constant by smooth cut-off so heuristically this is correct. 
We include a rigorous proof for \eqref{smoothdomination} in appendix \ref{smoothversiondomination}. 
% Therefore, we can work with this smooth version averaging operator and apply the Fourier analysis tools. 
The smoothed-out averaging operator on the right hand side is more amenable to Fourier analytic tools. 
Slightly abusing the notations, we continue to denote our the maximal operator as $\M_\delta^s $, i.e. 
\begin{equation*}
\begin{aligned}
      \M_\delta^s f(\underline{\mathbf x},r)&=\sup_{\overline{\mathbf x}\in I_1^s}   \mathfrak{A}_\delta  f(\overline{\mathbf x},\underline{\mathbf x},r)\\
      &=\sup_{\overline{\mathbf x}\in I_1^s}  \chi_1(r)\int_{\R^d}f(\mathbf x-r\gamma(u)-(0,\tilde{\boldsymbol{t}}))\psi_{\delta}(\tilde{\boldsymbol{t}})d\tilde{\boldsymbol{t}}\chi_0(u)du.
\end{aligned}
\end{equation*}
We will prove the maximal estimates \eqref{max1} for $s=1,...,d-1$ with this new definition, which will lead to Theorem 2 in view of \eqref{smoothdomination}.

\subsection{Decomposition of $\mathfrak A_\delta$ through its multiplier}Denote $
% \mathbf x=(x_1,...,x_d),
\boldsymbol{\xi}=(\xi_1, \tilde{\boldsymbol{\xi}})=(\xi_1,...,\xi_d)$. Since $\mathfrak A_\delta$ is a convolution operator, let us compute the multiplier for its Fourier transform. We want to remind the reader that typically, the Fourier transform is with respect to the space variables, i.e. $\mathbf x$ and the variable in dual frequency space is denoted as $\boldsymbol{\xi}.$ Assume $f\in C_c^\infty(\R^d)$,
% We explicitly compute the multiplier symbol of the averaging operator. We first represent $f$ using $\hat f$,
% denoted the symbol as $m(\xi)$ 
    \begin{equation*}
    \begin{aligned}
           \mathfrak{A}_\delta  f(\mathbf x,r)&=\chi_1(r)\int_{\R^d}f(\mathbf x-r\gamma(u)-(0,\tilde{\boldsymbol{t}}))\psi_{\delta}(\tilde{\boldsymbol{t}})d\tilde{\boldsymbol{t}}\chi_0(u)du\\
           &=\chi_1(r)\int_{\R^d}\int_{\R^d}\hat{f}(\boldsymbol{\xi})e^{2\pi i (\mathbf x-r\gamma(u)-(0,\tilde{\boldsymbol{t}}))\cdot \boldsymbol{\xi}}d\boldsymbol{\xi} \psi_{\delta}(\tilde{\boldsymbol{t}})d\tilde{\boldsymbol{t}}\chi_0(u)du.
    \end{aligned}
\end{equation*}
 $\int\psi_\delta=c$ and $\chi_0(u)$ is compactly supported. So the above integral is absolutely convergent. Applying Fubini's theorem, we interchange the order of the integration, 
\begin{equation*}
\begin{aligned}
     \mathfrak{A}_\delta  f(\mathbf x,r) &= \chi_1(r)\int_{\R^d}\int_{\R^d}\hat{f}(\boldsymbol{\xi})e^{2\pi i (\mathbf x-r\gamma(u)-(0,\tilde{\boldsymbol{t}}))\cdot \boldsymbol{\xi}}d\boldsymbol{\xi} \psi_{\delta}(\tilde{\boldsymbol{t}})d\tilde{\boldsymbol{t}}\chi_0(u)du\\
     =\chi_1(r)&\int_{\R^d}\hat{f}(\boldsymbol{\xi})\left(\int_\R\int_{\R^{d-1}}e^{-2\pi i (r\gamma(u)+(0,\tilde{\boldsymbol{t}}))\cdot \boldsymbol{\xi}} \psi_{\delta}(\tilde{\boldsymbol{t}})d\tilde{\boldsymbol{t}}\chi_0(u)du\right)e^{2\pi i\mathbf x \cdot \boldsymbol{\xi}}d\boldsymbol{\xi}.
\end{aligned}
\end{equation*}
Using the product structure of the inner integral and the definition of the Fourier transform, we obtain
\begin{equation*}
    \begin{aligned}
         \mathfrak{A}_\delta  &f(\mathbf x,r) \\
         =&\chi_1(r)\int_{\R^d}\hat{f}(\boldsymbol{\xi})\left(\int_\R\int_{\R^{d-1}}e^{-2\pi i (r\gamma(u)+(0,\tilde{\boldsymbol{t}}))\cdot \boldsymbol{\xi}} \psi_{\delta}(\tilde{\boldsymbol{t}})d\tilde{\boldsymbol{t}}\chi_0(u)du\right)e^{2\pi i\mathbf x \cdot \boldsymbol{\xi}}d\boldsymbol{\xi}\\
                      =&\chi_1(r)\int_{\R^d}\hat{f}(\boldsymbol{\xi})\int_\R e^{-2\pi r\boldsymbol{\xi}\cdot\gamma(u)}\left(\int_{\R^{d-1}} e^{-2\pi i \tilde{\boldsymbol{t}}\cdot \tilde{\boldsymbol{\xi}}} \psi_{\delta}(\tilde{\boldsymbol{t}})d\tilde{\boldsymbol{t}}\right)\chi_0(u)due^{2\pi i\mathbf x \cdot \boldsymbol{\xi}}d\boldsymbol{\xi}\\
           =&\chi_1(r)\int_{\R^d} \widehat{f}(\boldsymbol{\xi})     
           \left(\int_\R e^{-2\pi ir\boldsymbol{\xi}\cdot \gamma(u)}\widehat{\psi}(\delta\tilde{\boldsymbol{\xi}})\chi_0(u)du \right)e^{2\pi i\boldsymbol{\xi}\cdot \mathbf x}d\boldsymbol\xi.
    \end{aligned}
\end{equation*}
So we obtain the symbol of the multiplier,
\begin{equation*}
   m_{\delta,r}(\boldsymbol{\xi}):= \chi_1(r)\widehat{\psi}(\delta\tilde{\boldsymbol{\xi}})\int_\R e^{-2\pi ir\boldsymbol{\xi}\cdot \gamma(u)}\chi_0(u)du.
\end{equation*}

By decomposing the multiplier $m_{\delta,r}$ into different pieces, we divide the operator into three parts. Assume $\mathfrak{a}_d(\boldsymbol{\xi})$ is a smooth nonnegative cut-off function with compact support $\{\boldsymbol{\xi}\in \R^d: |\boldsymbol{\xi}|\leq2\}$ such that $\mathfrak{a}_d(\boldsymbol{\xi})=1$ on $\{\boldsymbol{\xi}\in \R^d: |\boldsymbol{\xi}|\leq 1\}$ 
% identically zero out of $B(\mathbf 0,2)^c$
. Define 
\begin{equation*}
    \phi(\boldsymbol{\xi}):=\mathfrak{a}_1\left(\frac{\xi_1}{(100d)^d|\sum_{i=2}^d\xi_i^2|^{1/2}}\right)=\mathfrak{a}_1\left(\frac{\xi_1}{(100d)^d|\tilde{\boldsymbol{\xi}}|}\right).
\end{equation*}
% where $\mathfrak{a}_1$ is when $d=1$ in the definition of $\mathfrak{a}$
Note that similar to $\tilde{\boldsymbol{t}},$ $\tilde{\boldsymbol{\xi}}$ represents the last $d-1$ coordinates of $\boldsymbol{\xi}$. Then $\phi$ is a smooth function on $\{\boldsymbol{\xi}\in \R^d: |\boldsymbol{\xi}|\geq 1\}$ such that $\phi(\boldsymbol{\xi})=1$ on $|\xi_1|\leq  (100d)^d |\tilde{\boldsymbol{\xi}}|$ and $\phi(\boldsymbol{\xi})=0$ on $|\xi_1|>  2(100d)^d |\tilde{\boldsymbol{\xi}}|$. Then we decompose the symbol as follows:

\begin{equation*}
     m_{\delta,r}(\boldsymbol{\xi})=m_{\delta,r}^{low}(\boldsymbol{\xi})+m_{\delta,r}^{high0}(\boldsymbol{\xi})+m_{\delta,r}^{high1}(\boldsymbol{\xi}),
\end{equation*}
where the first part is
\begin{equation}\label{lowpartmulti}
   m_{\delta,r}^{low}(\boldsymbol{\xi}):=\chi_1(r)     
           \int_\R e^{-2\pi ir\boldsymbol{\xi}\cdot \gamma(u)}\widehat{\psi}(\delta\tilde{\boldsymbol{\xi}})\chi_0(u)\mathfrak{a}_d(\boldsymbol{\xi})du .
\end{equation}
This is the low frequency part of the multiplier. The second part is 
\begin{equation}\label{high0}
    m_{\delta,r}^{high0}(\boldsymbol{\xi}):=\chi_1(r)     
           \int_\R e^{-2\pi ir\boldsymbol{\xi}\cdot \gamma(u)}\widehat{\psi}(\delta\tilde{\boldsymbol{\xi}})\chi_0(u)(1-\mathfrak{a}_d(\boldsymbol{\xi}))(1-\phi(\boldsymbol{\xi}))du 
\end{equation}
This is the high frequency part of the multiplier whose support is contained in $|\xi_1|\geq  2(100d)^d |\tilde{\boldsymbol{\xi}}|$. The third part is 
\begin{equation}\label{high1}
    m_{\delta,r}^{high1}(\boldsymbol{\xi}):=\chi_1(r)     
           \int_\R e^{-2\pi ir\boldsymbol{\xi}\cdot \gamma(u)}\widehat{\psi}(\delta\tilde{\boldsymbol{\xi}})\chi_0(u)(1-\mathfrak{a}_d(\boldsymbol{\xi}))\phi(\boldsymbol{\xi})du.
\end{equation}
This is the high frequency part of the symbol whose support is contained in $|\xi_1|\leq  2(100d)^d |\tilde{\boldsymbol{\xi}}|$. Therefore, the decomposition of multiplier will induce a decomposition of the averaging operator i.e. 
\begin{equation*}
     \mathfrak{A}_\delta  f=    \mathfrak{A}^{low}_\delta  f+   \mathfrak{A}^{high0}_\delta  f+   \mathfrak{A}^{high1}_\delta  f,
\end{equation*}
and a corresponding decomposition of maximal operator,
\begin{equation*}
    \M_\delta^s f=\M_\delta^{s,low} f+\M_\delta^{s,high0} f+\M_\delta^{s,high1} f.
\end{equation*}

Before we move on, we briefly explain the motivation of this decomposition. We first decompose the frequency space into low frequency part, i.e. $\{\boldsymbol{\xi}\in\R^d: |\boldsymbol{\xi}|\leq 1\}$ and high frequency part $\{\boldsymbol{\xi}\in\R^d: |\boldsymbol{\xi}|> 1\}$. This is because, by the uncertainty principle, the low frequency part is easier to handle. Heuristically, the uncertainty principle says that if $\widehat{f}$ is supported in some ball with radius $R$, then $|f|$ is essentially constant in scale $\frac{1}{R}$. Due the the compactness of $\mathfrak{a}_d,$ it is easy to verify that if we fix $(\underline{\mathbf x},r)=(x_3,...,x_d,r)$, the Fourier transform of $\mathfrak{A}^{low}_\delta f(\mathbf x,r)$ with respect to $(x_1,x_2)$ is contained in a compact set. This mean that $  \mathfrak{A}^{low}_\delta f(\mathbf x,r)$ (as a function of $(x_1,x_2)$) is essentially constant in a compact set. So intuitively there is no difference between $  \mathfrak{A}^{low}_\delta f(\mathbf x,r)$ and $ \M_\delta^{s,low} f(\underline{\mathbf x},r)$, while  the former averaging operator can be nicely controlled. We make this intuition precise in Subsection \ref{estforlowpart}.
% we have a nice control of it.

The high frequency part is more complicated. Since 
\begin{equation*}
   m_{\delta,r}(\boldsymbol{\xi})= \chi_1(r)\widehat{\psi}(\delta\tilde{\boldsymbol{\xi}})\int_\R e^{-2\pi ir\boldsymbol{\xi}\cdot \gamma(u)}\chi_0(u)du
\end{equation*}
is also an oscillatory integral, its behavior is dictated by the location and nature of the critical points of its phase function $\boldsymbol{\xi}\cdot\gamma(u).$ We note that 
\begin{equation*}
    \frac{\p (\boldsymbol{\xi}\cdot\gamma(u))}{\p u}=\boldsymbol{\xi}\cdot\gamma'(u)= \xi_1+\sum_{i=2}^d du^{d-1}\xi_i. 
    \end{equation*}
    Since $u\in [-1.5,1.5], $ as long as $|\xi_1|\geq (100d)^d|\tilde{\boldsymbol{\xi}}|$, then 
    \begin{equation*}
    \begin{aligned}
        \left|\frac{\p (\boldsymbol{\xi}\cdot\gamma(u))}{\p u}\right|&=\left|\xi_1+\sum_{i=2}^d du^{d-1}\xi_i\right|\\
        &\geq |\xi_1|- \left(\sum_{i=2}^d du^{d-1}|\xi_i|\right)\\
        &\gtrsim |\xi_1|\\
        &\gtrsim 1,
    \end{aligned}
    \end{equation*}
    when $|\boldsymbol{\xi}|\geq 1$. 
    Thus, the phase function of the oscillatory integral $m_{\delta,r}^{high0}$ has no critical point, allowing one to integrate by parts for arbitrarily many times, gaining a super-polynomial decay in $|\boldsymbol{\xi}|$.
    % So by non-stationary phase, as an oscillatory integral, the symbol of this part has super-polynomial decay 
    Hence its contribution is negligible. This argument has be made rigorous in Subsection \ref{secintbyparts}.

    Treatment of $m_{\delta,r}^{high1}$ and the resulting maximal operator $\M_\delta^{s,high1} $ is the crux of the problem. This is where we need to appeal to \cite{Ko_Lee_Oh_2023}. This part of the proof is presented in Subsection \ref{seclocalsmoothing}.

    % This explains the decomposition and we will make the intuition rigorous in the next two subsections.

\subsection{Estimating $\M_\delta^{s,low} f=\sup \mathfrak{A}_\delta^{low}f$}\label{estforlowpart} We first tackle the low frequency part.  We claim
    \begin{proposition}\label{1part}For any $p\geq 1$, there is some constant $C$ depending only on dimension and $p$ (not depending on $\delta$) such that 
    \begin{equation}\label{lowpart}
           \norm{\sup_{\overline{\mathbf x}\in \R^s}   \mathfrak{A}^{low}_\delta  f(\mathbf x,r)}_{L^p(\R^{s'})}\leq C\norm{f}_{L^p(\R^{d})}.
    \end{equation}      
    \end{proposition}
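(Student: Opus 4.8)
The plan is to exploit that, up to constants uniform in $\delta\in(0,1]$ and $r$, the multiplier $m_{\delta,r}^{low}$ in \eqref{lowpartmulti} is a smooth bump function supported in $\{\boldsymbol{\xi}:|\boldsymbol{\xi}|\le 2\}$; hence $\mathfrak A_\delta^{low}$ is a convolution operator in $\mathbf x$ whose kernel decays faster than any polynomial, with all implicit constants independent of $\delta$ and $r$. Once this is in place, the supremum over $\overline{\mathbf x}\in\R^s$ costs nothing: it is disposed of by splitting the kernel between the $\overline{\mathbf x}$- and $\underline{\mathbf x}$-variables, applying H\"older's inequality in the $\overline{\mathbf y}$-integration and Young's inequality in the $\underline{\mathbf x}$-variable. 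No oscillatory integral estimate is needed for this piece.

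First I would verify that for every multi-index $\alpha$ there is $C_\alpha$, independent of $\delta\in(0,1]$ and $r$, with $\operatorname{supp}m_{\delta,r}^{low}\subseteq\{|\boldsymbol{\xi}|\le2\}$ and $\sup_{\boldsymbol{\xi}}|\partial_{\boldsymbol{\xi}}^\alpha m_{\delta,r}^{low}(\boldsymbol{\xi})|\le C_\alpha$. The support statement is immediate from the factor $\mathfrak a_d(\boldsymbol{\xi})$. On that support $|\delta\tilde{\boldsymbol{\xi}}|\le2$, so $\widehat\psi(\delta\tilde{\boldsymbol{\xi}})$ and each of its $\boldsymbol{\xi}$-derivatives — which only produce a factor $\delta^{|\beta|}\le1$ times a derivative of the Schwartz function $\widehat\psi$ evaluated in the fixed compact set $\{|\cdot|\le2\}$ — are bounded uniformly in $\delta$; $\chi_1(r)$ is bounded; and the oscillatory factor $\int_\R e^{-2\pi i r\boldsymbol{\xi}\cdot\gamma(u)}\chi_0(u)\,du$ together with its $\boldsymbol{\xi}$-derivatives (which only bring down powers of $r\gamma(u)$ with $|u|\le2$, $r\le3$) is bounded on $\{|\boldsymbol{\xi}|\le2\}$. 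Consequently the inverse Fourier transform $K_{\delta,r}(\mathbf z):=\int_{\R^d}m_{\delta,r}^{low}(\boldsymbol{\xi})e^{2\pi i\boldsymbol{\xi}\cdot\mathbf z}\,d\boldsymbol{\xi}$ is a Schwartz function with all seminorms bounded uniformly in $\delta,r$; in particular, for each $N$ there is $C_N$ (independent of $\delta,r$) with $|K_{\delta,r}(\mathbf z)|\le C_N(1+|\mathbf z|)^{-N}$. Since $\mathfrak A_\delta^{low}f(\mathbf x,r)=(f*K_{\delta,r})(\mathbf x)$, this gives
\begin{equation*}
    |\mathfrak A_\delta^{low}f(\mathbf x,r)|\le C_N\int_{\R^d}|f(\mathbf y)|(1+|\mathbf x-\mathbf y|)^{-N}\,d\mathbf y .
\end{equation*}

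Next I would split variables. Writing $\mathbf x=(\overline{\mathbf x},\underline{\mathbf x})\in\R^s\times\R^{s'-1}$ and $\mathbf y=(\overline{\mathbf y},\underline{\mathbf y})$ likewise, one has $1+|\mathbf x-\mathbf y|\ge\bigl((1+|\overline{\mathbf x}-\overline{\mathbf y}|)(1+|\underline{\mathbf x}-\underline{\mathbf y}|)\bigr)^{1/2}$, since the left side dominates each of the two factors inside; hence $(1+|\mathbf x-\mathbf y|)^{-N}\le(1+|\overline{\mathbf x}-\overline{\mathbf y}|)^{-N/2}(1+|\underline{\mathbf x}-\underline{\mathbf y}|)^{-N/2}$. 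Moving the supremum inside the $\underline{\mathbf y}$-integral (supremum of an integral is at most the integral of the supremum) and applying H\"older's inequality in $\overline{\mathbf y}$ — the $L^{p'}(\R^s)$-norm of $\overline{\mathbf y}\mapsto(1+|\overline{\mathbf x}-\overline{\mathbf y}|)^{-N/2}$ is finite and, by translation invariance, independent of $\overline{\mathbf x}$ once $N>2s/p'$; when $p=1$ use the sup-norm instead — we get, with $g(\underline{\mathbf y}):=\|f(\cdot,\underline{\mathbf y})\|_{L^p(\R^s)}$ and $w(\underline{\mathbf z}):=(1+|\underline{\mathbf z}|)^{-N/2}$,
\begin{equation*}
    \sup_{\overline{\mathbf x}\in\R^s}|\mathfrak A_\delta^{low}f(\overline{\mathbf x},\underline{\mathbf x},r)|\lesssim_N (g*w)(\underline{\mathbf x}),
\end{equation*}
a bound that does not depend on $r$ and vanishes for $r\notin\operatorname{supp}\chi_1\subseteq[1/4,3]$. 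Taking $N$ large enough that $w\in L^1(\R^{s'-1})$, Young's inequality yields $\|g*w\|_{L^p(\R^{s'-1})}\le\|w\|_{L^1(\R^{s'-1})}\|g\|_{L^p(\R^{s'-1})}$, and by Fubini $\|g\|_{L^p(\R^{s'-1})}=\|f\|_{L^p(\R^d)}$ because $s+(s'-1)=d$. Finally, since the $L^p(\R^{s'})$-norm on the left of \eqref{lowpart} integrates $r$ only over the compact set $\operatorname{supp}\chi_1$, this contributes at most a bounded factor, and \eqref{lowpart} follows for every $p\ge1$.

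The only point that needs care — there is no genuine obstacle here — is the uniformity in $\delta$ of the bump bounds on $m_{\delta,r}^{low}$: it hinges on the frequency truncation $\mathfrak a_d$ keeping $\delta\tilde{\boldsymbol{\xi}}$ in a fixed compact set, so that the $\delta$-dilation of $\widehat\psi$ and its derivatives stays harmless. With that in hand the argument is a soft convolution estimate, and working directly with the rapidly decaying kernel rather than routing through the Hardy--Littlewood maximal function is precisely what lets the proof reach the endpoint $p=1$.
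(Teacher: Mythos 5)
Your argument is correct, and it reaches the same uniform-in-$\delta$ conclusion by a somewhat different mechanism than the paper. The paper first disposes of the supremum over $\overline{\mathbf x}$ with Bernstein's inequality, using that for fixed $(\underline{\mathbf x},r)$ the Fourier transform of $\mathfrak{A}^{low}_\delta f$ in $\overline{\mathbf x}$ is supported in $B^s(\mathbf 0,1)$; this converts the supremum into an $L^p_{\overline{\mathbf x}}$ norm, after which the full $L^p_{\mathbf x,r}(\R^{d+1})$ bound is obtained by factoring the multiplier as $m_1\cdot m_2$, bounding $\norm{\widecheck{m_1}}_{L^1}$ and $\norm{\widecheck{m_2}}_{L^1}$ separately, and applying Young's inequality. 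You instead prove a single quantitative fact — that $m^{low}_{\delta,r}$ is a family of bumps on $\{|\boldsymbol{\xi}|\le 2\}$ with all $\boldsymbol{\xi}$-derivatives bounded uniformly in $\delta\in(0,1]$ and $r\in\mathrm{supp}\,\chi_1$, so the convolution kernel obeys $|K_{\delta,r}(\mathbf z)|\le C_N(1+|\mathbf z|)^{-N}$ uniformly — and then kill the supremum directly: the tensor-type splitting $(1+|\mathbf x-\mathbf y|)^{-N}\le(1+|\overline{\mathbf x}-\overline{\mathbf y}|)^{-N/2}(1+|\underline{\mathbf x}-\underline{\mathbf y}|)^{-N/2}$, H\"older in $\overline{\mathbf y}$ (with the $L^{p'}$ norm of the weight independent of $\overline{\mathbf x}$ by translation invariance), and Young in $\underline{\mathbf x}$, with the compact $r$-support of $\chi_1$ absorbing the $r$-integration. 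The shared insight is the same — the low-frequency multiplier lives at unit frequency scale uniformly in $\delta$, because $\mathfrak a_d$ keeps $\delta\tilde{\boldsymbol{\xi}}$ in a fixed compact set — but your route avoids both Bernstein's inequality and the multiplier factorization, is entirely elementary (pointwise kernel domination plus H\"older/Young), and makes the uniformity in $\delta$ and $r$ explicit at the kernel level; the paper's Bernstein reduction has the advantage of being exactly the template reused for the two high-frequency pieces, where no favorable kernel bound is available and one must pass through $L^p_{\mathbf x,r}(\R^{d+1})$ estimates (integration by parts, respectively local smoothing). All the steps you give check out, including the geometric-mean inequality for the split and the Fubini identity $\norm{g}_{L^p(\R^{s'-1})}=\norm{f}_{L^p(\R^d)}$ from $s+(s'-1)=d$, so your proof is a valid, self-contained alternative for this low-frequency piece.
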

    This is definitely stronger than 
    \begin{equation*}
        \norm{\M_\delta^{s,low} f}_p= \norm{\sup_{\overline{\mathbf x}\in I_1^s}   \mathfrak{A}^{low}_\delta  f(\mathbf x,r)}_{L^p(\R^{s'})}\leq C_\eps \delta^{-\eps}\norm f_p,
    \end{equation*}
    in the sense that the range of $\overline{\mathbf x}$ over which we are taking supremum is the entire $\R^s$ and the finite boundedness instead of $\eps$-loss estimate holds.
\begin{proof}
    Let us recall the definition of $ \mathfrak{A}^{low}_\delta  f$ from \eqref{lowpartmulti}
\begin{align*}
     \mathfrak{A}^{low}_\delta  f(\mathbf x,r)=\chi_1(r)\int_{\R^d} \widehat{f}(\boldsymbol{\xi})     
           \left(\int_\R e^{-2\pi ir\boldsymbol{\xi}\cdot \gamma(u)}\widehat{\psi}(\delta\tilde{\boldsymbol{\xi}})\chi_0(u)\mathfrak{a}_d(\boldsymbol{\xi})du \right)e^{2\pi i\boldsymbol{\xi}\cdot \mathbf x}d\boldsymbol\xi.
\end{align*}
Due to the cut-off function $\mathfrak{a}_d$, it is easy to check that for any fixed $(\underline{\mathbf x},r)=(x_{s+1},...,x_d,r),$ 
     \begin{equation}\label{FT2}
         \mathrm{supp}\left(\mathcal F_{\overline{\mathbf x}}( \mathfrak{A}^{low}_\delta  f(\cdot,\underline{\mathbf x},r))\right)\subseteq B^s(\mathbf 0,1),
     \end{equation}
     where $B^s(\mathbf 0,1)$ is the ball in $\R^s$ centered at zero with radius $1$ and $\mathcal F_{\overline{\mathbf x}}(\cdot)$ is the Fourier transform with respect to $\overline{\mathbf x}$. Indeed, 
     \begin{equation}\label{compactsupport1}
           \begin{aligned}
         \mathcal F&_{\overline{\mathbf x}}( \mathfrak{A}^{low}_\delta  f(\cdot,\underline{\mathbf x},r))\\=
         &\chi_1(r)\int_{\R^s} e^{-2\pi i\overline{\boldsymbol{\eta}}\cdot \overline{\mathbf x}}\int_{\R^d}\widehat{f}(\boldsymbol{\xi})  \left(\int_\R e^{-2\pi ir\boldsymbol{\xi}\cdot \gamma(u)}\chi_0(u)du \right)\widehat{\psi}(\delta\tilde{\boldsymbol{\xi}}))\mathfrak{a}_d(\boldsymbol{\xi})e^{2\pi i\boldsymbol{\xi}\cdot \mathbf x}d\boldsymbol\xi d\overline{\mathbf x}\\
         =&\chi_1(r)\int_{\R^d}\widehat{f}(\boldsymbol{\xi}) \left(\int_\R e^{-2\pi ir(\overline{\boldsymbol{\eta}},\underline{\boldsymbol{\xi}})\cdot \gamma(u)}\chi_0(u)du \right)\widehat{\psi}(\delta\eta_2,...,\delta\eta_s,\delta\underline{\boldsymbol{\xi}})\mathfrak{a}_d(\overline{\boldsymbol{\eta}},\underline{\boldsymbol{\xi}})e^{2\pi i\underline{\boldsymbol{\xi}}\cdot\underline{\mathbf{x}}}d\underline{\mathbf x}.
     \end{aligned} 
     \end{equation}
     Note that $\mathfrak{a}_d$ vanishes for $\overline{\boldsymbol{\eta}}$ out side of $B^s(\mathbf 0,1)$. Now we are going to apply the uncertainty principle. One specific form of the principle is the following Bernstein's inequality or reverse H\"older's inequality.   
    \begin{proposition}\cite[item (2), Proposition 5.3]{WolffBook} Suppose $F\in L^1+L^2$ and supp($\widehat{F}$)$\subseteq B(\mathbf 0,R)$. Then for any $1\leq p\leq q\leq \infty$,
    \begin{equation}\label{bernstein}
        \norm{F}_q\leq CR^{d(\frac{1}{p}-\frac{1}{q})}\norm{F}_p.
    \end{equation}
    \end{proposition}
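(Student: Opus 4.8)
The plan is to obtain \eqref{bernstein} from two elementary observations: that Fourier support in a ball of radius $R$ can be rescaled to radius $1$ by a dilation, and that a function whose Fourier transform is supported in $B(\mathbf 0,1)$ is reproduced by convolution against a fixed Schwartz kernel; once these are in place, Young's convolution inequality closes the argument.

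First I would perform the rescaling. Setting $F_R(x):=F(x/R)$, a change of variables shows $\widehat{F_R}(\boldsymbol\xi)=R^d\widehat F(R\boldsymbol\xi)$, so $\widehat{F_R}$ is supported in $B(\mathbf 0,1)$, while $\norm{F_R}_p=R^{d/p}\norm{F}_p$ for every $p\in[1,\infty]$. Therefore the estimate at scale $R=1$, namely $\norm{F_R}_q\lesssim\norm{F_R}_p$, becomes exactly \eqref{bernstein} after undoing the dilation and collecting the factors $R^{-d/q}$ and $R^{d/p}$. We may assume $\norm{F}_p<\infty$, since otherwise there is nothing to prove.

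Next, for the case $R=1$, I would fix once and for all a Schwartz function $\varphi$ whose Fourier transform is a smooth bump equal to $1$ on $B(\mathbf 0,1)$ and supported in $B(\mathbf 0,2)$. Since $\widehat F$ is supported in $B(\mathbf 0,1)$ we have $\widehat F=\widehat\varphi\,\widehat F$, hence $F=\varphi*F$. Define $r\in[1,\infty]$ by $\tfrac1r:=1-\tfrac1p+\tfrac1q$; since $1\le p\le q\le\infty$ one has $0\le\tfrac1r\le1$, so this $r$ is admissible for Young's inequality, which gives $\norm{F}_q=\norm{\varphi*F}_q\le\norm{\varphi}_r\,\norm{F}_p$. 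As $\varphi$ is Schwartz, $\norm{\varphi}_r$ is finite and uniformly bounded over $r\in[1,\infty]$, so the $R=1$ estimate holds with $C=\sup_{1\le r\le\infty}\norm{\varphi}_r$, and combining with the rescaling step proves \eqref{bernstein}.

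I do not expect a genuine obstacle here; the only points that merit a sentence of care are, first, that the identity $F=\varphi*F$ is legitimate under the weak hypothesis $F\in L^1+L^2$ rather than $F\in L^2$ — here $\widehat F$ is a compactly supported tempered distribution lying in $L^\infty+L^2$, and $\varphi*F$ is a well-defined smooth function, so the identity is valid as tempered distributions — and second, that the Young exponent $r$ remains in the admissible range throughout $1\le p\le q\le\infty$, which is immediate from the displayed formula for $1/r$.
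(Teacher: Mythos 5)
Your proof is correct, and it is exactly the standard argument: the paper does not prove this proposition but cites it from Wolff's lecture notes, whose proof is the same rescaling to $R=1$, the reproducing identity $F=\varphi*F$ with $\widehat\varphi\equiv 1$ on the unit ball, and Young's inequality with $\tfrac1r=1-\tfrac1p+\tfrac1q$. The distributional point you flag is even simpler than you suggest, since $F\in L^1+L^2$ makes $\widehat F$ a genuine function in $L^\infty+L^2$ vanishing a.e.\ off $B(\mathbf 0,R)$, so $\widehat\varphi\,\widehat F=\widehat F$ holds pointwise a.e.
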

\begin{remark}
    The reason why Bernstein's inequality is also called reverse H\"older's inequality is, for any function $f$ supported in a ball $B_R$ with radius $R$ and its averaging $L^p$ norm
    \begin{equation*}
        \norm{f}_{L^p_\sharp(B_R)}:=\left(\frac{1}{\mathcal L^d(B_R)}\int_{B_R} |f|^p \right)^\frac{1}{p},
    \end{equation*}
    It is elementary to see by H\"older's inequality that if $1\leq p\leq q\leq \infty,$
    \begin{equation*}
         \norm{f}_{L^p_\sharp(B_R)}\lesssim  \norm{f}_{L^q_\sharp(B_R)}\text{ which is }\norm{f}_p\lesssim R^{d(\frac{1}{p}-\frac{1}{q})}\norm{f}_q.
    \end{equation*}
    I.e. we can use the (averaging) norm of $f$ with higher exponent $q$  to bound the norm of $f$ with lower exponent $p$, while Bernstein's inequality tells us if Fourier support information is known, we can use lower exponent norm to control higher exponent norm, that is equation \eqref{bernstein}. Of course, the right hand side should also be compensated a factor of $R^{d(\frac{1}{p}-\frac{1}{q})}$. 
    
    This is exactly what we will repeatedly apply in the rest of the section. We will use an ordinary $L^p$ norm, $p<\infty$ to bound the $L^\infty$ norm, i.e. the supremum, at an affordable cost.
\end{remark}
     We apply inequality \eqref{bernstein} with $d=s$ and 
     \begin{equation*}
         F_{\underline{\mathbf{x}},r}(\overline{\mathbf x})= \mathfrak{A}^{low}_\delta  f(\overline{\mathbf x},\underline{\mathbf x},r),
     \end{equation*}
      for any $p\geq 1,$
     \begin{equation}\label{sup<p}
        \sup_{\overline{\mathbf x}\in \R^s}   \mathfrak{A}^{low}_\delta  f(\mathbf x,r)\lesssim \norm{ \mathfrak{A}^{low}_\delta  f(\mathbf x,r)}_{L^p_{\overline{\mathbf x}}(\R^s)}. 
     \end{equation}
     Raising both sides by $p$-th power and integrating over $(\underline{\mathbf x},r)$, in order to prove \eqref{lowpart}, it suffices to prove that 
     \begin{equation}\label{localsmoothingtrivial1}
          \norm{\norm{ \mathfrak{A}^{low}_\delta  f(\mathbf x,r)}_{L^p_{\overline{\mathbf x}}(\R^s)}}_{L_{\underline{\mathbf x},r}^p(\R^{s'})}= \norm{   \mathfrak{A}^{low}_\delta  f(\mathbf x,r)}_{L^p_{{\mathbf x},r}(\R^{d+1})}\lesssim\norm{f}_{L^p(\R^d)}.
     \end{equation}
     According to \cite[Proposition 1.3]{WolffBook}, it is easy to see that 
     \begin{equation*}
         m_\delta (\boldsymbol{\xi}):=\int_\R e^{-2\pi ir\boldsymbol{\xi}\cdot \gamma(u)}\widehat{\psi}(\delta\tilde{\boldsymbol{\xi}})\chi_0(u)\mathfrak{a}_d(\boldsymbol{\xi})du \in  C_c^\infty(\R^d),
     \end{equation*}   
      % where $C_c^\infty(\R^d)$ is the space of smooth functions with compact support. 
      then \eqref{localsmoothingtrivial1} is deduced directly from Young's convolution inequality and the fact $\int \psi_\delta\sim1$. Indeed, since $\chi_1(r)$ is a compactly supported function,  
      % if we denote the symbol of low frequency part as $m_{low}$, then
     \begin{equation}\label{dropr}
     \begin{aligned}
          \norm{  \mathfrak{A}^{low}_\delta f(\mathbf x,r)}_{L^p_{{\mathbf x},r}(\R^{d+1})}&=\norm{\norm{\chi_1\widecheck{m^{}_{\delta}}*f}_{L^p_r(\R)}}_{L_\mathbf{x}^p(\R^{d})} \\
          &\lesssim\norm{\widecheck{m^{}_{\delta}}*f}_{L_\mathbf{x}^p(\R^{d})}\lesssim\norm{\widecheck{m_{\delta}^{}}}_{L^1_\mathbf x(\R^d)}\norm{f}_p.
     \end{aligned}
     \end{equation}
     It suffices to prove $\norm{\widecheck{m_{\delta}^{}}}_{L^1(\R^d)}$ is bounded. To simplify the computation, we write $  m_\delta$ into the following form. Recall that $\mathfrak{a}_1=1$ on $[-1,1],$ so we can multiply $m_\delta$ by $\mathfrak{a}_1$ without affecting its value, that is 
     \begin{align*}
          m_\delta (\boldsymbol{\xi})&=\left(\widehat{\psi}(\delta\tilde{\boldsymbol{\xi}})\mathfrak{a}_1(\frac{\xi_1}{2})\right)\times \left(\mathfrak{a}_d(\boldsymbol{\xi}) \int_\R e^{-2\pi ir\boldsymbol{\xi}\cdot \gamma(u)}\chi_0(u)du\right)\\
          &=m_1(\boldsymbol{\xi})\times m_2(\boldsymbol{\xi}).
     \end{align*}
     Therefore, we have 
     \begin{equation*}
     \begin{aligned}
           \widecheck{m_{\delta}^{}}(\mathbf x)&=\mathcal F^{-1} (m_1) * \mathcal F^{-1}(m_2)(\mathbf x)\\
           &=(2\widecheck{\mathfrak{a}_1}(2\cdot)\psi_\delta*\int \widecheck{\mathfrak{a}_d}(\cdot-r\gamma(u))\chi_0(u)du)(\mathbf x)\\
           &=\int 2\widecheck{\mathfrak{a}_1}(2\eta_1)\psi_\delta(\tilde{\boldsymbol{\eta}})\int \widecheck{\mathfrak{a}_d}(\mathbf x-\boldsymbol{\eta}-r\gamma(u))\chi_0(u)dud\boldsymbol{\eta}.
     \end{aligned}
     \end{equation*}

     So it suffices to prove 
     \begin{equation}\label{mlow}
       \norm{\widecheck{m_1}*\widecheck{m_2}}_{L^1(\R^d)}  =\norm{\widecheck{\mathfrak{a}_1}(2\cdot)\psi_\delta*\int \widecheck{\mathfrak{a}}(\cdot-r\gamma(u))\chi_0(u)du}_{L^1(\R^d)}\lesssim 1.
     \end{equation}
     Since we have 
     \begin{equation*}
     \begin{aligned}
         \norm{\widecheck{m_1}}_{L^1(\R^d)}= \norm{\widecheck{\mathfrak{a}_1}(2\cdot)\psi_\delta}_{L^1(\R^d)}&=\int \widecheck{\mathfrak{a}_1}(2x_1)\psi_\delta(\tilde{\mathbf x})dx_1d\tilde{\mathbf x}\\&=\int \widecheck{\mathfrak{a}_1}(2x_1)dx_1\int \psi_\delta(\tilde{\mathbf x})d\tilde{\mathbf x}\lesssim 1
     \end{aligned}
     \end{equation*}
     due to  $\int \psi_\delta\sim 1$ and $\mathfrak{a}_1\in \mathcal S(\R)$ and 
     \begin{equation*}
         \norm{\int \widecheck{\mathfrak{a}}(\cdot-r\gamma(u))\chi_0(u)du}_{L^1(\R^d)}\leq \norm{\widecheck{\mathfrak{a}_1}}_{L^1(\R^d)}\norm{\chi_0}_{L^1(\R^d)}\lesssim1
     \end{equation*}
     due to the Fubini-Tonelli and $\widecheck{\mathfrak{a}}\in\mathcal S(\R^d).$ Applying Young's inequality to \eqref{mlow} and plugging these two estimates, we obtain the desired result.
 \end{proof}
\begin{remark}
       For \eqref{sup<p}, while in the case where the $\sup$ or $L^\infty$ norm is considered over a compact set or a half plane, say $(0,\infty)$, Bernstein's inequality will be replaced by a standard application of fundamental theorem of calculus, as in the proof of Sobolev embedding inequality.
\end{remark}

 \subsection{Estimating $\M_\delta^{s,high0} f=\sup \mathfrak{A}_\delta^{high0}f$}\label{secintbyparts} Now we start to tackle the high frequency part where there is no critical point of the phase function. We claim 
  \begin{proposition}\label{2part}For any $p\geq 1$, there is some constant $C$ depending only on dimension and $p$ (not depending $\delta$) such that 
        \begin{equation*}
            \norm{ \M_\delta^{s,high0} f(\underline{\mathbf x},r))}_{L^p(\R^{s'})}\leq   \norm{\sup_{\overline{\mathbf x}\in \R^s} \mathfrak{A}^{{high0}}_\delta f(\overline{\mathbf{x}},\underline{\mathbf x},r)}_{L^p(\R^{s'})}\leq C\norm{f}_{L^p(\R^d)}.
        \end{equation*}
    \end{proposition}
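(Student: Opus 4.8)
The plan is to exploit the absence of critical points of the phase $\boldsymbol{\xi}\cdot\gamma(u)$ on the support of $m_{\delta,r}^{high0}$, already observed in the paragraph preceding Subsection \ref{estforlowpart}: there $|\xi_1|\geq 2(100d)^d|\tilde{\boldsymbol{\xi}}|$ and $|\boldsymbol{\xi}|\geq 1$, hence $|\boldsymbol{\xi}|\lesssim|\xi_1|$ and $\bigl|\p_u(\boldsymbol{\xi}\cdot\gamma(u))\bigr|=\bigl|\xi_1+\sum_{i=2}^d iu^{i-1}\xi_i\bigr|\gtrsim|\xi_1|\gtrsim|\boldsymbol{\xi}|$ for all $u\in[-2,2]$. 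First I would introduce a Littlewood--Paley decomposition $1=\sum_{k\geq 0}\rho_k(\boldsymbol{\xi})$ with $\rho_0$ supported in $\{|\boldsymbol{\xi}|\leq 2\}$ and $\rho_k$ supported in $\{|\boldsymbol{\xi}|\sim 2^k\}$ for $k\geq 1$, and write $\mathfrak{A}^{high0}_\delta=\sum_{k\geq 0}\mathfrak{A}_{\delta,k}$, where $\mathfrak{A}_{\delta,k}$ is the convolution operator in $\mathbf x$ with multiplier $m_{\delta,r}^{high0}\rho_k$ (only the scales $2^k\gtrsim 1$ contribute, since $1-\mathfrak{a}_d$ vanishes on $|\boldsymbol{\xi}|\leq 1$).

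Next I would establish the rapid-decay estimate $\norm{\widecheck{m_{\delta,r}^{high0}\rho_k}}_{L^1(\R^d)}\lesssim_N 2^{-kN}$ for every $N$, uniformly in $\delta\in(0,1]$ and in $r$. This follows by integrating by parts $N$ times in $u$ in the oscillatory integral defining $m_{\delta,r}^{high0}$ in \eqref{high0}: each integration produces a factor $\bigl(\p_u(\boldsymbol{\xi}\cdot\gamma(u))\bigr)^{-1}$ of size $\lesssim 2^{-k}$ on the support of $\rho_k$, while differentiating the amplitude $\widehat{\psi}(\delta\tilde{\boldsymbol{\xi}})\chi_0(u)(1-\mathfrak{a}_d)(1-\phi)$ costs nothing harmful ($\widehat{\psi}$ is Schwartz and $\delta\leq 1$, $\chi_0$ is a fixed smooth bump, and $\phi$ is homogeneous of degree $0$ so each $\boldsymbol{\xi}$-derivative of it gains $2^{-k}$). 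Applying the same bookkeeping after taking up to $2d$ further $\boldsymbol{\xi}$-derivatives of $m_{\delta,r}^{high0}\rho_k$, and using that its support has Lebesgue measure $\lesssim 2^{kd}$, yields $\norm{(1+|\mathbf z|)^{2d}\,\widecheck{m_{\delta,r}^{high0}\rho_k}(\mathbf z)}_{L^\infty_{\mathbf z}}\lesssim_N 2^{-kN}$, whence the claimed $L^1$ bound.

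Finally I would run, scale by scale, the same ``Bernstein to pass from the supremum to an $L^p$ average'' device used in Proposition \ref{1part}. For fixed $(\underline{\mathbf x},r)$ the Fourier transform of $\mathfrak{A}_{\delta,k}f(\cdot,\underline{\mathbf x},r)$ in $\overline{\mathbf x}\in\R^s$ is supported in $\{|\overline{\boldsymbol{\eta}}|\lesssim 2^k\}$, so Bernstein's inequality \eqref{bernstein} (with $d$ replaced by $s$ and $q=\infty$) gives $\sup_{\overline{\mathbf x}\in\R^s}|\mathfrak{A}_{\delta,k}f(\mathbf x,r)|\lesssim 2^{ks/p}\norm{\mathfrak{A}_{\delta,k}f(\cdot,\underline{\mathbf x},r)}_{L^p_{\overline{\mathbf x}}(\R^s)}$; raising to the $p$-th power, integrating in $(\underline{\mathbf x},r)$, dropping the $r$-integration via the compact support of $\chi_1$, and applying Young's inequality exactly as in \eqref{dropr} yields $\norm{\sup_{\overline{\mathbf x}\in\R^s}|\mathfrak{A}_{\delta,k}f|}_{L^p(\R^{s'})}\lesssim 2^{ks/p}\norm{\widecheck{m_{\delta,r}^{high0}\rho_k}}_{L^1(\R^d)}\norm{f}_p\lesssim_N 2^{k(s/p-N)}\norm{f}_p$. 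Choosing $N=s+1$ (so $s/p-N\leq s-(s+1)=-1$ since $p\geq 1$) and summing the resulting geometric series over $k$ gives $\norm{\sup_{\overline{\mathbf x}\in\R^s}|\mathfrak{A}^{high0}_\delta f|}_{L^p(\R^{s'})}\lesssim\norm{f}_p$, which proves the proposition, since $I_1^s\subseteq\R^s$ forces $\norm{\M_\delta^{s,high0}f}_{L^p(\R^{s'})}$ to be no larger. The only real obstacle is the uniform-in-$\delta$ nonstationary-phase bookkeeping of the second step; the rest is the low-frequency argument repeated dyadically.
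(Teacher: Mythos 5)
Your proposal is correct and follows essentially the same route as the paper: Littlewood--Paley decomposition of $\mathfrak{A}^{high0}_\delta$, Bernstein's inequality at scale $2^k$ to pass from the supremum over $\overline{\mathbf x}$ to an $L^p_{\overline{\mathbf x}}$ average, Young's inequality reducing everything to $\norm{\widecheck{m_k}}_{L^1(\R^d)}\lesssim_N 2^{-kN}$, non-stationary phase in $u$ (no critical points on the support of $1-\phi$) to prove that kernel bound, and a geometric sum in $k$. The only difference is cosmetic bookkeeping in the kernel estimate: you trade $2d$ extra $\boldsymbol{\xi}$-derivatives of the multiplier for the weight $(1+|\mathbf z|)^{2d}$ and use the $O(2^{kd})$ support volume, whereas the paper integrates by parts with the directional operator $D_{\boldsymbol{\xi}}^{\mathbf x}$ and splits physical space at $|\mathbf x|\sim 2^{-k(N+1)}$; both rest on the same non-critical-phase input and give the same conclusion.
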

  \begin{proof}
    Suppressing the dependence of $\delta$, we denote by 
    \begin{equation}\label{multi}
        m(\boldsymbol \xi)=  \int_\R e^{-2\pi ir\boldsymbol{\xi}\cdot \gamma(u)}\chi_0(u)\mathfrak{c}(\boldsymbol{\xi})du, 
    \end{equation}
    where $\mathfrak{c}(\boldsymbol{\xi})=\widehat{\psi}(\delta\tilde{\boldsymbol{\xi}})(1-\mathfrak{a}(\boldsymbol{\xi}))(1-\phi(\boldsymbol{\xi}))$ is a smooth bump function supported in 
    \begin{equation*}
        \left\{\boldsymbol{ \xi}\in\R^d: |\boldsymbol{\xi}|>1, |\xi_1|> (100d)^d \left(\sum_{i=2}^d|\xi
        _i|^2\right)^{1/2}\right\}.
    \end{equation*}
    Then by \eqref{high0} and \eqref{multi},
    \begin{equation}\label{multihigh0compact}
        \mathfrak A_\delta^{high0}f(\mathbf x,r)=\chi_1(r)\widecheck{m}*f(\mathbf x).
    \end{equation}
    
    Note that we always assume $f\geq 0$. Apply Littlewood-Paley decomposition to the whole frequency space, for $k\geq 0$
     \begin{equation*}
        \mathcal F( \mathcal P_k f)(\boldsymbol{\xi}):=\varphi_k(\boldsymbol{\xi})f(\boldsymbol{\xi}),
     \end{equation*}
     where $\varphi_k(\boldsymbol{\xi})$ is a smooth cut-off function supported in the annulus $\{\boldsymbol{\xi}: 2^{k-1}<|\boldsymbol{\xi}|\leq 2^{k+1}\}$. Then by triangle inequality, we have the pointwise control 
     \begin{equation*}
        \sup_{\overline{\mathbf x}\in\R^s}  \mathfrak{A}^{high0}_\delta  f(\overline{\mathbf{x}},\underline{\mathbf x},r)\leq \sum_k \sup_{\overline{\mathbf x}\in\R^s}  \mathfrak{A}^{high0}_\delta  \mathcal P_kf(\overline{\mathbf{x}},\underline{\mathbf x},r)
     \end{equation*}
     for any $(\underline{\mathbf x},r)$, which implies $L^p$ bound,
     \begin{equation*}
          \norm{  \sup_{\overline{\mathbf x}\in\R^s}  \mathfrak{A}^{high0}_\delta  f(\overline{\mathbf{x}},\underline{\mathbf x},r)}_{L^p_{\underline{\mathbf x},r}}\leq \sum_k \norm{\sup_{\overline{\mathbf x}\in\R^s}  \mathfrak{A}^{high0}_\delta  \mathcal P_kf(\overline{\mathbf{x}},\underline{\mathbf x},r)}_{L^p_{\underline{\mathbf x},r}}.
     \end{equation*}
     It suffices to prove the the right hand side is summable. In fact, we have the following classical result. 
     \begin{lemma}[integrate by parts]\label{intbypar}With above notations, for any $N>0$, there is $C_N$ such that for any $k\geq 0,$
     \begin{equation}\label{arbdec}
        \norm{\sup_{\overline{\mathbf x}\in\R^s}  \mathfrak{A}^{high0}_\delta  \mathcal P_kf(\overline{\mathbf{x}},\underline{\mathbf x},r)}_{L^p_{\underline{\mathbf x},r}}\leq C_N2^{-k(N-\frac{s}{p})}\norm{f}_{L^p(\R^d)}.
     \end{equation}
     \end{lemma}
    If this is the case, by taking $N=10s$, note $p\geq 1$, we obtain 
     \begin{equation*}
        \sum_k \norm{\sup_{\overline{\mathbf x}\in\R^s}  \mathfrak{A}^{high0}_\delta  \mathcal P_kf(\overline{\mathbf{x}},\underline{\mathbf x},r)}_{L^p_{\underline{\mathbf x},r}}\lesssim_N \sum_k 2^{-k}\norm f_{L^p(\R^d)}\lesssim \norm f_{L^p(\R^d)}
     \end{equation*}
     which concludes the proof.
 \end{proof}
 % \begin{remark}
 %     Note that with Lemma \ref{intbypar}, in fact we can prove that the bound is as arbitrarily small as we want.
 % \end{remark}
 We now turn to the proof of the Lemma \ref{intbypar}, which is a classical argument in oscillatory integral.
 \begin{proof}[Proof of Lemma \ref{intbypar}] For any fixed $k\geq 0,$ due to Littlewood-Paley decomposition, similar to \eqref{compactsupport1}, it is easy to verify that the Fourier transform of $ \mathfrak{A}^{high0}_\delta  \mathcal P_kf$ (with respect to $\overline{\mathbf x}$) is supported in a ball with radius $ O(2^k)$. By Bernstein's inequality \eqref{bernstein}, 
\begin{equation*}
     \mathfrak{A}^{high0}_\delta  \mathcal P_kf(\overline{\mathbf{x}},\underline{\mathbf x},r)\lesssim  2^{\frac{sk}{p}} \norm{ \mathfrak{A}^{high0}_\delta  \mathcal P_kf(\overline{\mathbf{x}},\underline{\mathbf x},r)}_{L^p_{\overline{\mathbf x}}(\R^s)}.
\end{equation*}
 Raising both sides by $p$-th power and integrating over $(\underline{\mathbf x},r)$, we obtain 
     \begin{equation*}
         \norm{\sup_{\overline{\mathbf x}\in\R^s}  \mathfrak{A}^{high0}_\delta  \mathcal P_kf(\overline{\mathbf{x}},\underline{\mathbf x},r)}_{L^p_{\underline{\mathbf x},r}}\leq2^{\frac{sk}{p}} \norm{ \mathfrak{A}^{high0}_\delta  \mathcal P_kf(\overline{\mathbf{x}},\underline{\mathbf x},r)}_{L^p(\R^{d+1})}.
     \end{equation*}
     In order to prove \eqref{arbdec}, it suffices to prove for any $N$, the right hand side
     \begin{equation*}
        \norm{ \mathfrak{A}^{high0}_\delta  \mathcal P_kf(\overline{\mathbf{x}},\underline{\mathbf x},r)}_{L^p(\R^{d+1})}\leq C_N2^{-kN}\norm{f}_{L^p(\R^d)}.
     \end{equation*}
     Recall \eqref{multihigh0compact}, combining with the cut-off function $\varphi_k$, we find the new multiplier for  $ \mathfrak{A}^{high0}_\delta  \mathcal P_kf$ is 
    \begin{equation*}
          \chi_1(r)m_k(\boldsymbol \xi)=  \chi_1(r)m(\boldsymbol \xi)\varphi_k(\boldsymbol{\xi})=\chi_1(r)\mathfrak{c}(\boldsymbol{\xi})\varphi_k(\boldsymbol{\xi})\int_\R e^{-2\pi ir\boldsymbol{\xi}\cdot \gamma(u)}\chi_0(u)du.
    \end{equation*}  
     Similar to \eqref{dropr}, after integrating over $r$ first, it suffices to prove 
     \begin{equation*}
         \norm{ \mathfrak{A}^{high0}_\delta  \mathcal P_kf(\overline{\mathbf{x}},\underline{\mathbf x},r)}_{L^p(\R^{d+1})}\lesssim\norm{\widecheck m_k *f}_{L^p(\R^d)}\leq C_N2^{-kN}\norm{f}_{L^p(\R^d)}.
     \end{equation*}
     By Young's convolution inequality, it suffices to prove
\begin{equation}\label{newasd}
\norm{\widecheck{m_k}}_{L^1(\R^d)}\leq C_N 2^{-kN}.
\end{equation}
In fact, this is just integration by parts, which  is repeatedly used in Chapter VII, VIII of Stein's book \cite{bible} when discussing the decay of oscillatory integrals.

We first fix $\mathbf x: |\mathbf x|\geq 2^{-k(N+1)}$, define differential operator $D_{\boldsymbol{\xi}}^\mathbf x$ as 
\begin{equation*}
    D_{\boldsymbol{\xi}}^\mathbf x(f)(\boldsymbol{\xi}):=\frac{1}{2\pi i |\mathbf x|^2}\nabla_{\boldsymbol \xi}\cdot \mathbf x.
\end{equation*}
It can be checked that 
\begin{equation*}
    D_{\boldsymbol{\xi}}^\mathbf{x}(e^{2\pi i \boldsymbol{\xi}\cdot \mathbf x})=e^{2\pi i \boldsymbol{\xi}\cdot \mathbf x}.
\end{equation*}
As a result, for any $M\in \mathbb N_+$,
\begin{equation*}
    \widecheck{m_k}(\mathbf x)=\int m_k(\boldsymbol{\xi})e^{2\pi i \boldsymbol{\xi}\cdot \mathbf x}d\boldsymbol{\xi}=\int m_k(\boldsymbol{\xi})(D_{\boldsymbol{\xi}}^\mathbf x)^M(e^{2\pi i \boldsymbol{\xi}\cdot \mathbf x})d\boldsymbol{\xi}. 
\end{equation*}
It can be verified by integration by parts,  that if either $f$ or $g$ is a compactly supported function, then 
\begin{equation*}
    \langle f, D_{\boldsymbol{\xi}}^\mathbf x g \rangle= -\langle D_{\boldsymbol{\xi}}^\mathbf xf,  g \rangle,
\end{equation*}
where $  \langle \cdot, \cdot \rangle$ denotes the integral of the product of two functions. Due to the compactness of $m_k$, we can transfer $(D_{\boldsymbol{\xi}}^{\mathbf x})^M$ to $m_k$ and limit the domain of integration in a ball with radius $O(2^k)$. So 
\begin{equation}\label{mk}
\begin{aligned}
      \widecheck{m_k}(\mathbf x)&=(-1)^M\int (D_{\boldsymbol{\xi}}^\mathbf x)^M(m_k)(\boldsymbol{\xi})e^{2\pi i \boldsymbol{\xi}\cdot \mathbf x}d\boldsymbol{\xi}\\
      &=(-1)^M\int_{B(\mathbf 0, O(2^k))} (D_{\boldsymbol{\xi}}^\mathbf x)^M(m_k)(\boldsymbol{\xi})e^{2\pi i \boldsymbol{\xi}\cdot \mathbf x}d\boldsymbol{\xi}.
\end{aligned}
\end{equation}
We claim 
\begin{lemma}[fast decay for $(D_{\boldsymbol{\xi}}^\mathbf x)^M(m_k)(\boldsymbol{\xi})$]\label{fast decay for m_k}For any $J\in \mathbb N_+$, there is a constant $C_{M,J}$ depending on $M$, $J$ and dimension, such that 
\begin{equation}\label{estfordm}
    |(D_{\boldsymbol{\xi}}^\mathbf x)^M(m_k)(\boldsymbol{\xi})|\leq C_{M,J} \frac{1}{|\mathbf x|^{M}} 2^{-k(J-M)}.
\end{equation}
\end{lemma}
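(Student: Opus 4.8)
The plan is to reduce \eqref{estfordm} to a bound on the ordinary $\boldsymbol{\xi}$-derivatives of $m_k$ and then to extract the gain $2^{-kJ}$ from non-stationary phase in the variable $u$. First, since $D_{\boldsymbol{\xi}}^{\mathbf x}=\frac{1}{2\pi i|\mathbf x|^2}(\mathbf x\cdot\nabla_{\boldsymbol{\xi}})$, iterating $M$ times gives
\begin{equation*}
    (D_{\boldsymbol{\xi}}^{\mathbf x})^M m_k(\boldsymbol{\xi})=\frac{1}{(2\pi i)^M|\mathbf x|^{2M}}\sum_{|\alpha|=M}\binom{M}{\alpha}\,\mathbf x^\alpha\,\partial^\alpha_{\boldsymbol{\xi}}m_k(\boldsymbol{\xi}),
\end{equation*}
and since $|\mathbf x^\alpha|\leq|\mathbf x|^M$ for $|\alpha|=M$, it is enough to show that for every multi-index $\alpha$ and every $J\in\mathbb N_+$,
\begin{equation*}
    |\partial^\alpha_{\boldsymbol{\xi}}m_k(\boldsymbol{\xi})|\leq C_{|\alpha|,J}\,2^{-kJ},
\end{equation*}
uniformly in $\boldsymbol{\xi}$, in $r\in\mathrm{supp}\,\chi_1\subseteq[1/4,3]$, and in $\delta\in(0,1]$; this implies \eqref{estfordm} since $2^{-kJ}\leq 2^{-k(J-M)}$ for $k\geq 0$ and $M=|\alpha|$.

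Next I would write $m_k(\boldsymbol{\xi})=\mathfrak c(\boldsymbol{\xi})\,\varphi_k(\boldsymbol{\xi})\,I(\boldsymbol{\xi})$ with $I(\boldsymbol{\xi}):=\int_{\R}e^{-2\pi ir\boldsymbol{\xi}\cdot\gamma(u)}\chi_0(u)\,du$, and expand $\partial^\alpha m_k$ by the Leibniz rule into a finite sum of terms $\partial^{\beta_1}\mathfrak c\cdot\partial^{\beta_2}\varphi_k\cdot\partial^{\beta_3}I$ with $\beta_1+\beta_2+\beta_3=\alpha$. Every derivative of $\mathfrak c=\widehat\psi(\delta\tilde{\boldsymbol{\xi}})(1-\mathfrak a_d)(1-\phi)$ is $O_{\beta_1}(1)$ uniformly in $\delta\in(0,1]$: the factor $\widehat\psi(\delta\tilde{\boldsymbol{\xi}})$ only contributes harmless powers $\delta^{|\cdot|}\leq 1$ times bounded derivatives of $\widehat\psi\in C_c^\infty$ (Appendix \ref{existenceofcutoff}), the derivatives of $1-\mathfrak a_d$ are bounded, and $\phi$ is homogeneous of degree $0$, so $|\partial^\gamma\phi|\lesssim|\boldsymbol{\xi}|^{-|\gamma|}\lesssim 1$ on $\mathrm{supp}\,\mathfrak c$. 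Likewise $|\partial^{\beta_2}\varphi_k|\lesssim_{\beta_2}2^{-k|\beta_2|}\leq 1$. Finally, differentiating under the integral sign,
\begin{equation*}
    \partial^{\beta_3}_{\boldsymbol{\xi}}I(\boldsymbol{\xi})=\int_{\R}a_{\beta_3}(u)\,e^{-2\pi ir\boldsymbol{\xi}\cdot\gamma(u)}\,du,\qquad a_{\beta_3}(u)=\Big(\prod_{j=1}^d(-2\pi iru^j)^{(\beta_3)_j}\Big)\chi_0(u),
\end{equation*}
and $a_{\beta_3}$ is smooth, supported in $[-2,2]$, with $\|a_{\beta_3}\|_{C^N}\lesssim_{|\alpha|,N}1$ for $r\in[1/4,3]$. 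Thus everything reduces to the decay estimate $|\partial^{\beta_3}_{\boldsymbol{\xi}}I(\boldsymbol{\xi})|\lesssim_{|\alpha|,J}2^{-kJ}$, which we only need on $\mathrm{supp}(\mathfrak c\varphi_k)$ since all the above terms are supported there.

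To prove that decay, note that on $\mathrm{supp}(\mathfrak c\varphi_k)$ one has $|\xi_1|>(100d)^d|\tilde{\boldsymbol{\xi}}|$ (hence $|\xi_1|\geq\tfrac12|\boldsymbol{\xi}|$) and $|\boldsymbol{\xi}|\sim 2^k$ (since $\varphi_k$ localizes to the annulus $\{|\boldsymbol{\xi}|\sim 2^k\}$ and $1-\mathfrak a_d$ forces $|\boldsymbol{\xi}|\geq 1$). For $|u|\leq 2$,
\begin{equation*}
    \Big|\tfrac{d}{du}\big(\boldsymbol{\xi}\cdot\gamma(u)\big)\Big|=\Big|\xi_1+\sum_{i=2}^d iu^{i-1}\xi_i\Big|\geq|\xi_1|-d2^d|\tilde{\boldsymbol{\xi}}|\geq\tfrac12|\xi_1|\gtrsim|\boldsymbol{\xi}|\sim 2^k,
\end{equation*}
while $\big|\tfrac{d^\ell}{du^\ell}(\boldsymbol{\xi}\cdot\gamma(u))\big|\lesssim_\ell|\boldsymbol{\xi}|\sim 2^k$ for all $\ell\geq 1$. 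Hence the phase of $\partial^{\beta_3}_{\boldsymbol{\xi}}I$ is $2\pi r\cdot 2^k$ times a function $\Psi(u)$ with $|\Psi'|\gtrsim 1$ and $|\Psi^{(\ell)}|\lesssim_\ell 1$ on $[-2,2]$, with constants independent of $\boldsymbol{\xi},k,r,\delta$. Applying the non-stationary phase principle — i.e. repeated integration by parts through $L=\frac{1}{2\pi ir2^k\Psi'}\frac{d}{du}$, which satisfies $L(e^{2\pi ir2^k\Psi})=e^{2\pi ir2^k\Psi}$ — exactly $J$ times yields $|\partial^{\beta_3}_{\boldsymbol{\xi}}I(\boldsymbol{\xi})|\leq C_{|\alpha|,J}2^{-kJ}$, the constant depending only on $|\alpha|,J,d$ through $\|a_{\beta_3}\|_{C^J}$ and the $\Psi$-bounds. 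Combining this with the $O(1)$ bounds on the remaining factors and with the first-paragraph reduction finishes the proof. The only non-routine point — the crux — is the uniform lower bound $|\frac{d}{du}(\boldsymbol{\xi}\cdot\gamma(u))|\gtrsim|\boldsymbol{\xi}|$ on the cone $\{|\xi_1|>(100d)^d|\tilde{\boldsymbol{\xi}}|\}$ for $|u|\leq 2$: this is exactly the ``no critical point'' feature that motivated splitting off $\mathfrak A_\delta^{high0}$, and everything else is bookkeeping, the $\delta$-uniformity being automatic because $\delta$ enters only through $\widehat\psi(\delta\tilde{\boldsymbol{\xi}})$.
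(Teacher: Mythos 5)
Your proposal is correct, and it reaches the estimate by a cleaner route than the paper's. The paper applies $(D_{\boldsymbol{\xi}}^{\mathbf x})^M$ directly to the oscillatory integral, so derivatives falling on the exponential produce factors $(-2\pi i\,\mathbf x\cdot\gamma(u))^j$ that must be rebalanced against the prefactor $|\mathbf x|^{-2M}$; it then rescales $\boldsymbol{\xi}\to 2^k\boldsymbol{\xi}$ and runs two auxiliary induction lemmas (Lemmas \ref{secondpartlemma} and \ref{firstpartlemma}), and the derivatives of the rescaled cut-offs cost a factor $2^{kM}$, which is exactly why the statement only claims $2^{-k(J-M)}$. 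You instead expand $(\mathbf x\cdot\nabla_{\boldsymbol{\xi}})^M$ multinomially and use $|\mathbf x^\alpha|\le|\mathbf x|^M$ at the outset, which removes $\mathbf x$ from the problem entirely and reduces the lemma to the pure symbol bound $|\partial^\alpha_{\boldsymbol{\xi}}m_k|\lesssim_{|\alpha|,J}2^{-kJ}$; after that, the Leibniz bookkeeping on $\mathfrak c\,\varphi_k\,I$ (with the correct observations that $\delta$ enters only through $\widehat\psi(\delta\tilde{\boldsymbol{\xi}})$ and that $\phi$ is homogeneous of degree $0$ and locally constant near the $\xi_1$-axis, so all cut-off derivatives are $O(1)$ on the relevant support) and the $J$-fold non-stationary-phase integration by parts in $u$ are routine. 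The analytic heart is identical to the paper's — the lower bound $|\partial_u(\boldsymbol{\xi}\cdot\gamma(u))|\gtrsim|\boldsymbol{\xi}|\sim 2^k$ on the cone $|\xi_1|\ge(100d)^d|\tilde{\boldsymbol{\xi}}|$, uniformly for $u$ in the compact support of the amplitude and $r\in[1/4,3]$ — but your arrangement avoids the rescaling and the $(-2\pi i\,\mathbf x\cdot\gamma(u))^j$ factors, needs no induction, and in fact yields the stronger bound $C_{M,J}|\mathbf x|^{-M}2^{-kJ}$ with no $2^{kM}$ loss, which is more than enough for the application in Lemma \ref{intbypar}.
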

We will put the proof of this lemma in Appendix \ref{app fast decay for m_k}.
Take the lemma for granted and plug the estimate \eqref{estfordm} back to \eqref{mk}, we obtain 
\begin{equation*}
    |\widecheck{m_k}|\leq C_{M,J}2^{kd}\frac{1}{|\mathbf x|^{M}} 2^{-k(J-M)}
\end{equation*}
Note that $|\mathbf x|\geq 2^{-k(N+1)}$, as long as $M,J$ are chosen to be sufficiently large and appropriately (depending on $d$ and $N$), 
\begin{equation}\label{part1}
\begin{aligned}
    \int_{\{\mathbf x:  |\mathbf x|\geq 2^{-k(N+1)}\}}  |   \widecheck{m_k}(\mathbf x)| d\mathbf x&\lesssim_{N,M,J} 2^{kd}2^{-k(J-M)}\int_{\{\mathbf x:  |\mathbf x|\geq 2^{-k(N+1)}\}}\frac{1}{|\mathbf x|^M}d\mathbf x \\&\leq   C_N 2^{-kN}.
\end{aligned}
\end{equation}
For $\mathbf x:  |\mathbf x|< 2^{-k(N+1)}$, by a basic property of Fourier transform and supp$(\varphi_k)\subseteq B(\mathbf 0,O(2^k))$, we obtain 
\begin{equation*}
    \norm{\widecheck{m_k}}_{L^\infty}\leq \norm{m_k}_{L^1(\R^d)}\lesssim \norm{\varphi_k}_{L^1(\R^d)}\lesssim 2^{kd}.
\end{equation*}
So 
\begin{equation}\label{part2}
      \int_{\{\mathbf x:  |\mathbf x|< 2^{-k(N+1)}\}}  |   \widecheck{m_k}(\mathbf x)| d\mathbf x\leq C_N 2^{-kdN}\leq C_N 2^{-kN}.
\end{equation}
Combining \eqref{part1} and \eqref{part2}, we get the desired \eqref{newasd}.  
 \end{proof}

\subsection{Estimating $\M_\delta^{s,high1} f=\sup \mathfrak{A}^{{high1}}_\delta f$}\label{seclocalsmoothing}

\subsubsection{A local smoothing estimate}In this part, we introduce the setting and the result of Ko, Lee and Oh \cite{Ko_Lee_Oh_2023}.

Let $\Gamma: I\to \R^d$ be a smooth curve and $B\geq 1$ be a large number. Assume $\Gamma$ satisfies the following two conditions 
\begin{align}
    \label{aa}\max_{0\leq j\leq 3d+1}|\Gamma^{(j)}&(u)|\leq B,\quad \forall u\in I,\\ \label{bb}
    \mathrm{Vol}(\Gamma^{(1)}(u),...,&\Gamma^{(d)}(u))\geq 1/B,\quad \forall u\in I,
\end{align}
where $\mathrm{Vol}(v_1,...,v_d)$ denotes the $d$-dimensional volume of the parallelepiped generated by $v_1,...v_d\in \R^d$.

\begin{definition}\label{def7}
    
For $k \geq 0$, let $\mathbb{A}_k = \{\xi \in \mathbb{R}^d : c2^{k-1} \leq |\xi| \leq C2^{k+1}\}$. We say $a \in C^{2d+2}(\mathbb{R}^{d+2})$ is a \textit{symbol of type $(k,d, B)$ relative to $\Gamma$} if $\operatorname{supp} a \subset I \times [2^{-1},4] \times \mathbb{A}_k$, and 
% condition
% \begin{equation}\label{RLB}
%     \sum_{\ell=1}^L \langle\Gamma^{(i)}(u),\xi\rangle\geq B^{-1}|\xi|.
% \end{equation}
% holds for $\Gamma$ whenever $(u, t, \xi) \in \operatorname{supp} a$ for some $t$, and

\begin{equation}\label{symboldecaycondition}
    |\partial_s^j \partial_t^l \partial_\xi^\alpha a(s, t, \xi)| \leq B |\xi|^{-|\alpha|}
\end{equation}
for $(j, l, \alpha) \in \mathcal{I}_d := \{(j, l, \alpha) : 0 \leq j \leq 1, 0 \leq l \leq 2d, |\alpha| \leq 2d  + 2\}$.
\end{definition} 
We define an integral operator by

\[
\mathcal{A}_t[\Gamma, a] f(x) = (2\pi)^{-d} \iint e^{i(x - t \Gamma(s)) \cdot \xi} a(s, t, \xi) ds \, \hat{f}(\xi) d\xi. 
\]

Then we have

\begin{theoremB}[{\cite[Theorem 2.2]{Ko_Lee_Oh_2023}}, local smoothing estimate]\label{localsmoothing}
 Let $\Gamma \in C^{3d+1}(I)$ satisfy \eqref{aa} and \eqref{bb} for some $B \geq 1$. Suppose $a$ is a symbol of type $(k, d, B)$ relative to $\Gamma$. Then, if $p \geq 4d - 2$, for $\epsilon > 0$ there is a constant $C_\epsilon = C_\epsilon(B)$ such that

\[
\|\mathcal{A}_t[\Gamma, a] f\|_{L^p_{\mathbf x,r}(\mathbb{R}^{d+1})} \leq C_\epsilon \, 2^{\left(-\frac{2}{p} + \epsilon\right) k} \|f\|_{L^p(\mathbb{R}^d)}. 
\]
\end{theoremB}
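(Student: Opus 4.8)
Write $\lambda=2^{k}$. The plan is to prove the estimate at the single exponent $p_{0}=4d-2$ and to recover all $p\ge 4d-2$ from it by interpolation against a trivial $L^{\infty}$ bound; the endpoint itself is obtained by decomposing $\Gamma$ into short arcs, applying an $\ell^{2}$-decoupling inequality adapted to the cone generated by $\Gamma$, and using crude bounds for the individual arc pieces.

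\textit{Reduction to the endpoint.} From \eqref{symboldecaycondition} with $j=l=|\alpha|=0$ and $\mathrm{supp}\,a\subset I\times[2^{-1},4]\times\mathbb{A}_{k}$, repeated integration by parts in $\xi$ shows that for each fixed $t$ the convolution kernel of $f\mapsto\mathcal A_{t}[\Gamma,a]f$ is, up to rapidly decaying tails, an $L^{1}_{x}$-bounded bump of mass $O(B)$ supported within $O(\lambda^{-1})$ of the arc $t\Gamma(I)$; hence $\norm{\mathcal A_{t}[\Gamma,a]f}_{L^{\infty}(\R^{d+1})}\le C_{B}\norm{f}_{L^{\infty}(\R^{d})}$, uniformly in $k$. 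Since the operator is fixed once $k$ is, complex interpolation of this with the endpoint bound $\norm{\mathcal A_{t}[\Gamma,a]f}_{L^{p_{0}}(\R^{d+1})}\le C_{\epsilon}\lambda^{-2/p_{0}+\epsilon}\norm{f}_{L^{p_{0}}(\R^{d})}$ gives $\norm{\mathcal A_{t}[\Gamma,a]f}_{L^{p}(\R^{d+1})}\le C_{\epsilon}\lambda^{-2/p+\epsilon}\norm{f}_{L^{p}(\R^{d})}$ for every $p\ge p_{0}$. So it suffices to treat $p=p_{0}$.

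\textit{Arc decomposition, decoupling, and the numerology.} Fix a smooth partition of unity subordinate to a cover of $I$ by $M\sim\lambda^{1/d}$ intervals $I_{\nu}$ of length $\lambda^{-1/d}$, and split $\mathcal A_{t}[\Gamma,a]=\sum_{\nu}T_{\nu}$ with $T_{\nu}=\mathcal A_{t}[\Gamma,a_{\nu}]$. Taylor expanding $\Gamma$ on $I_{\nu}$ and invoking \eqref{aa} and the non-degeneracy \eqref{bb}, the $(x,t)$-Fourier support of $T_{\nu}f$ is contained, modulo negligible tails from the $t$-integration, in a plate $\Theta_{\nu}$; after an affine normalization the $\{\Theta_{\nu}\}$ form a finitely overlapping cover of a neighbourhood of the cone $\{(\xi,-\Gamma(s)\cdot\xi):|\xi|\sim\lambda,\ s\in I\}\subset\R^{d+1}$, for which the cone-type $\ell^{2}$-decoupling — whose proof reduces, by the standard projection argument, to the Bourgain--Demeter--Guth decoupling for non-degenerate curves in $\R^{d}$ and is thus valid on $[2,d(d+1)]\ni p_{0}$ — yields
\begin{equation*}
  \norm{\mathcal A_{t}[\Gamma,a]f}_{L^{p}(\R^{d+1})}\le C_{\epsilon}\lambda^{\epsilon}\Bigl(\sum_{\nu}\norm{T_{\nu}f}_{L^{p}(\R^{d+1})}^{2}\Bigr)^{1/2}.
\end{equation*}
For each $\nu$ one has the crude estimate $\norm{T_{\nu}f}_{L^{p}(\R^{d+1})}\lesssim_{B}\lambda^{-1/d}\norm{f}_{L^{p}(\R^{d})}$, since for fixed $t$ the kernel of $T_{\nu}$ has $L^{1}_{x}$-mass $\lesssim|I_{\nu}|=\lambda^{-1/d}$. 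Inserting this, using H\"older in $\nu$ to trade the $\ell^{2}$ sum for $M^{1/2-1/p}$ times an $\ell^{p}$ sum, and summing the $\ell^{p}$ sum back to $\norm{f}_{L^{p}}$ by exploiting the essentially bounded overlap of the frequency supports of the pieces (Littlewood--Paley), one reaches a bound of the shape $\lambda^{\epsilon}\,M^{1/2-1/p}\,\lambda^{-1/d}\,\norm{f}_{L^{p}}\sim\lambda^{\epsilon-1/(2d)-1/(dp)}\norm{f}_{L^{p}}$; and $\tfrac{1}{2d}+\tfrac{1}{dp}\ge\tfrac{2}{p}$ holds exactly when $p\ge 4d-2$, so at $p=p_{0}$ this is the asserted $C_{\epsilon}\lambda^{-2/p_{0}+\epsilon}$.

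\textit{Main obstacle.} The subtle point is the passage from the $\ell^{2}$ sum over the $M\sim\lambda^{1/d}$ pieces to an $\ell^{p}$ sum carrying only $M^{1/2-1/p}$: a bare application of decoupling followed by the crude per-arc bound yields only the weaker range $p\ge 4d$, and the improvement to $p\ge 4d-2$ needs a genuine almost-orthogonality among the $T_{\nu}f$. This can be arranged, e.g., by an additional angular decomposition of $\widehat{f}$ together with a non-stationary-phase estimate showing $T_{\nu}$ to be negligible on frequency sectors away from the critical direction of the arc $I_{\nu}$, or by running the decoupling at intermediate scales and iterating. A secondary technical issue is that $a$ is only finitely smooth (cf.\ \eqref{symboldecaycondition}), so the reduction of the variable-coefficient operator to the constant-coefficient model underlying the decoupling must be carried out by localizing to $(x,t)$-balls of radius a small power of $\lambda^{-1}$ and absorbing the resulting errors, with \eqref{aa} keeping all constants uniform.
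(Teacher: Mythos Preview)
The paper does not prove Theorem B at all; it is quoted verbatim from \cite[Theorem 2.2]{Ko_Lee_Oh_2023} and used as a black box in Section \ref{seclocalsmoothing} to derive the maximal estimates for $\mathcal{M}^s_\delta$, $s=1,\dots,d-1$. So there is no ``paper's own proof'' to compare your sketch against.

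That said, your outline is a fair summary of the strategy in \cite{Ko_Lee_Oh_2023}. The main engine is indeed an $\ell^2$-decoupling inequality for the conical hypersurface generated by a non-degenerate curve, which rests on the Bourgain--Demeter--Guth moment-curve decoupling, and the endpoint numerology $\tfrac{1}{2d}+\tfrac{1}{dp}\ge\tfrac{2}{p}\iff p\ge 4d-2$ that you compute is correct. You also correctly identify the real obstruction: combining decoupling with the crude per-arc $L^p$ bound alone gives only $p\ge 4d$, and the improvement to $4d-2$ needs a further almost-orthogonality in $\nu$. In \cite{Ko_Lee_Oh_2023} this is implemented through a microlocal decomposition of $\widehat{f}$ into sectors adapted to the arcs $I_\nu$, together with non-stationary-phase estimates forcing the off-diagonal $T_\nu f_\mu$ to be negligible --- essentially the ``angular decomposition'' you suggest, though carrying it out rigorously with only the finitely many derivatives guaranteed by \eqref{symboldecaycondition} and Definition \ref{def7} involves technicalities you only gesture at. Your sketch is honest about this gap but does not close it.
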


\subsubsection{Maximal estimate via local smoothing} Now we adapt our problem to their setting. Recall the definition of $m_{\delta,r}^{high1}$, the multiplier of $\mathfrak{A}^{{high1}}_\delta f$ from \eqref{high1}. 

Applying Littlewood-Paley decomposition to $\tilde{\boldsymbol{\xi}}=(\xi_2,...,\xi_d)$ in frequency space and assuming that the smooth cut-off functions are $\varphi_k(\tilde{\boldsymbol{\xi}})$ which is supported in $\sum_{i=2}^d|\xi_i|^2\in  (2^{2(k-1)},2^{2(k+1)}], k\geq 1$, we also denote the corresponding Littlewood-Paley projection by $\mathcal{P}_k$. I.e. we have 
\begin{equation}\label{multiplier version}
\begin{aligned}
           \mathfrak{A}^{high1}_\delta  &\mathcal P_kf(\mathbf x,r)=\\
          \chi_1(r)&\int_{\R^d} \widehat{f}(\boldsymbol{\xi})      
           \left(\int_\R e^{-2\pi ir\boldsymbol{\xi}\cdot \gamma(u)}\widehat{\psi}(\delta\tilde{\boldsymbol{\xi}})\chi_0(u)du \right)e^{2\pi i\boldsymbol{\xi}\cdot \mathbf x}\varphi_k(\tilde{\boldsymbol{\xi}})\phi(\boldsymbol{\xi}) d\boldsymbol{\xi}. 
\end{aligned}
\end{equation}
Note that $\phi$ is the smooth cut-off function such that $|\xi_1|\leq 2(100d)^d |\tilde{\boldsymbol{\xi}}|$. 

By the dyadic decomposition of $(\xi_2,...,\xi_d)$, 
% combined with the relation with $|\xi_1|\leq (100d)^d |\tilde{\boldsymbol{\xi}}|$, 
it suffices to consider those $k$'s such that $2^{k+1}\leq \delta^{-1}$, I.e. $k\lesssim |\log \delta|$.
Since for the terms with larger $k$'s, they are identically zero due to the compactness of $\widehat \psi$. 
% Indeed, since 
So 
$$ \mathfrak{A}^{high1}_\delta  f=\sum_{k=1}^{\infty}\mathfrak{A}^{high1}_\delta \mathcal{P}_kf= \sum_{k=1}^{O(|\log \delta|)}\mathfrak{A}^{high1}_\delta \mathcal{P}_k f.$$
% Now the number of the terms in the summation is $|\log \delta|$ which is affordable for loss. 
Applying triangle inequality and taking supremum on both sides, we have
\begin{equation*}
    \norm{\sup_{\overline{\mathbf x}\in \R^s}\mathfrak A^{high1}_\delta f}_{L^p(\R^{s'})}\leq \sum_{k=1}^{O(|\log\delta|)} \norm{\sup_{\overline{\mathbf x}\in \R^s}       \mathfrak{A}^{high1}_\delta  \mathcal P_kf}_{L^p(\R^{s'})}
    % =\sum_{k=1}^{O(|\log(\delta)|)} \norm{\M_\delta^s  \mathcal P_kf}_{L^p(\R^{d-1})}.
\end{equation*}
Note that the compactness of $r$ is included in the definition of the averaging operator.
It suffices to consider each term of the summand on the right hand side. I.e. it suffices to prove for each $\eps>0$, there is a constant $C_\eps>0$ such that for all $k$, 
\begin{equation}\label{eq111}
    \norm{\sup_{\overline{\mathbf x}\in \R^s}\mathfrak A_\delta^{high1}  \mathcal P_kf}_{L^p(\R^{s'})}\leq C_\eps\delta^{-\eps} \norm{f}_{L^p(\R^d)}.
\end{equation}
If this is true, then 
\begin{equation*}
\begin{aligned}
     \sum_{k=1}^{O(|\log\delta|)} \norm{\sup_{\overline{\mathbf x}\in \R^s}       \mathfrak{A}^{high1}_\delta  \mathcal P_kf}_{L^p(\R^{s'})}&\leq C_\eps O|\log\delta|\delta^{-\eps}\norm{f}_{L^p(\R^d)}\\
     &\leq C_\eps\delta^{-\eps}\norm{f}_{L^p(\R^d)}
\end{aligned}
\end{equation*}
since $\eps$ is arbitrary.

By the previous computations, we have $2^k\lesssim \delta^{-1}$. This implies $2^{k\eps}\lesssim_\eps \delta^{-\eps}$ so to prove \eqref{eq111}, it suffices to prove that 
\begin{equation}\label{reduction equation}
    \norm{\sup_{\overline{\mathbf x}\in \R^s}\mathfrak A_\delta^{high1}  \mathcal P_kf}_{L^p(\R^{s'})}\leq C_\eps 2^{k\eps}\norm{f}_{L^p(\R^d)}.
\end{equation}
Now we are ready to use this set up to verify the conditions of \cite{Ko_Lee_Oh_2023}. Since $2^{k+1}\leq\delta^{-1},$ supp$(\varphi_k)\subseteq$ supp$(\widehat{\psi}(\delta\cdot))$ We rewrite equation \eqref{multiplier version} as 

\begin{equation*}
\begin{aligned}
      \mathfrak{A}^{high1}_\delta  &\mathcal{P}_k f(\mathbf x,r)\\
     &=\chi_1(r)\int_{\R^d} \widehat{f}(\boldsymbol{\xi})\left(\int_\R e^{-2\pi ir\boldsymbol{\xi}\cdot \gamma(u)}\chi_0(u)du \right)
     \times e^{2\pi i\boldsymbol{\xi}\cdot \mathbf x}\varphi_k(\tilde{\boldsymbol{\xi}})\phi(\boldsymbol{\xi})d\boldsymbol{\xi}\\
        &=\int\int_\R e^{2\pi i\boldsymbol{\xi}\cdot (\mathbf x-r\gamma(u))}a(u,r,\boldsymbol{\xi})du\widehat{f}(\boldsymbol{\xi})d\boldsymbol{\xi},
\end{aligned}
\end{equation*}
where 
\[
    a(u,r,\boldsymbol{\xi})=\chi_0(u)\chi_1(r)\phi(\boldsymbol{\xi})\varphi_k(\tilde{\boldsymbol{\xi}}).
\]
It is easy to verify the requirement on the support of $a$ is satisfied.
To use their conclusion, we need $a$ to satisfy  
\eqref{symboldecaycondition}. This is achieved by selecting those bump functions carefully. We give an intuitive explanation for the computations. For the partial derivatives of $u$ and $r$, since $\chi_0$ and $\chi_1$ are just fixed bump functions, we can always choose $B$ sufficiently large to tackle this part. For the decay of $\boldsymbol{\xi}$, combining the effect of $\phi$ and $\varphi_k$, this function is roughly a smooth bump function supported in a annulus with radius $2^k$ and satisfies \eqref{symboldecaycondition} naturally. We will give a computation for this part in Appendix \ref{Verification of conditions}.

To apply their local smoothing, Theorem B, we also need to verify the following conditions. By choosing a sufficiently large $B$, it is easy to see that the truncated moment curve $\gamma \in C^{3d+1}$, and satisfies \eqref{aa} and \eqref{bb}:
    \begin{align*}
        \max_{0\leq i\leq 3d+1}|\gamma^{(i)}(u)|\leq (3d+1)(2d)^{100d}&\leq B\\
        \text{Vol}(\gamma^{(1)}(u),...,\gamma^{{(d)}}(u))=\prod_{i=1}^di!&\geq 1/B.
    \end{align*}
     Then for $p\geq4d-2$ and all $\eps>0$, there is a $C_\eps=C_\eps(B)$ such that 
    \begin{equation}\label{local smoothing}
        \norm{ \mathfrak{A}^{high1}_\delta \mathcal{P}_kf(\mathbf x,r)}_{L^p_{\mathbf x,r}(\R^{d+1})}\leq C_\eps 2^{(-\frac{2}{p}+\eps)k}\norm{f}_{L^p(\R^d)}.
    \end{equation}

With this theorem we can continue our computations to get \eqref{reduction equation}. 

Fix $(x_{s+1},...,x_d,r)=(\underline{\mathbf{x}},r)$, by computing the Fourier transform of the function $$F(\overline{\mathbf x})=F_{\underline{\mathbf x},r}(\overline{\mathbf x})= \mathfrak{A}^{high1}_\delta \mathcal{P}_kf(\overline{\mathbf x},\underline{\mathbf x},r)$$ with respect to $\overline{\mathbf x}=(x_1,...,x_s)$, note that $|\xi_1|\leq 2(100d)^d |\tilde{\boldsymbol{\xi}}|$ due to the existence of $\phi$,  we can find that the Fourier transform $\widehat{F}(\overline{\boldsymbol{\xi}})$ is compactly supported in a ball with radius $O(2^{k})$. Applying Bernstein's inequality \eqref{bernstein} to $F$, we have
\begin{equation*}
    \sup_{\overline{\mathbf x}\in\R^s} F\lesssim2^{\frac{sk}{p}}\norm{F}_{L^p(\R^s)}.
\end{equation*}
Raising both sides by $p$-th power and integrating over $(\underline{\mathbf x},r)$, we obtain
\begin{equation*}
     \norm{\sup_{\overline{\mathbf x}}\mathfrak A^{high1}_\delta \mathcal P_kf}_{L^p(\R^{s'})}\lesssim 2^{\frac{sk}{p}}\norm{ \mathfrak{A}^{high1}_\delta  \mathcal{P}_kf(\mathbf x,r)}_{L^p(\R^{d+1})}.
\end{equation*}
Then we can apply the local smoothing estimate \eqref{local smoothing}
\begin{align*}
 \norm{\sup_{\overline{\mathbf x}}\mathfrak A^{high1}_\delta \mathcal P_kf}_{L^p(\R^{s'})}&\lesssim 2^{\frac{sk}{p}} \times 2^{-\frac{2k}{p}+\eps k}\norm{f}_{L^p(\R^d)}\\
    &\lesssim 2^{\frac{(s-2)k}{p}+\eps k}\norm{f}_{L^p(\R^d)}\\
    &\lesssim \delta^{\frac{(s-2)}{p}-\eps}\norm{f}_{L^p(\R^d)},
\end{align*}
for $p\geq 4d-2$, which is \eqref{reduction equation}. This concludes the proof.

Note that when $s=1$, the previous two cases dominate. Hence we cannot anticipate any bound $\to 0$ as $\delta\to 0.$

\section*{Appendix}
\addcontentsline{toc}{section}{Appendix}
 \subsection{The cut-off functions and smooth version domination}
 In this section, we prove the existence of the cut-off functions and the smooth version indeed can bound the rough original averaging operator. I.e. \eqref{smoothdomination} holds.
 \subsubsection{Existence of the cut-off functions}\label{existenceofcutoff} The existence of $\chi_1$ and $\chi_0$ is trivial. We omit the proof. So it suffices to prove the existence of $\psi.$ 
 \begin{proposition}
There is a function $\psi(\mathbf x):\R^{d}\to \R$ is a nonnegative smooth function that satisfies that 
\begin{enumerate}
    \item $\hat{\psi}$ is compactly supported and nonnegative.
    \item $\int_{\R^{d}} \psi>0$.
    \item\label{item3} $\psi(\mathbf x)\geq 1$ for some fixed $\mathbf x\in B(\mathbf 0,C_d)$ for some $C_d>0$ to be determined later.
\end{enumerate}     
 \end{proposition}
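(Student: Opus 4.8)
The plan is to produce $\psi$ explicitly as the squared modulus of the inverse Fourier transform of a bump function. This single choice makes the two competing requirements compatible: taking $\psi=|\widecheck g|^{2}$ forces $\widehat\psi$ to be the autocorrelation $g*\widetilde g$ of a nonnegative compactly supported function, hence automatically nonnegative and compactly supported, while $\psi$ itself is manifestly nonnegative.

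Concretely, first I would fix a standard mollifier $g\in C_c^\infty(\R^d)$ that is real, nonnegative, not identically zero, and supported in $B(\mathbf 0,1)$, and set $\psi_0:=|\widecheck g|^{2}=\widecheck g\,\overline{\widecheck g}$. Since $g$ is smooth and compactly supported, $\widecheck g$ is Schwartz (Paley--Wiener), so $\psi_0$ is a nonnegative smooth function on $\R^d$. Computing its Fourier transform, $\widehat{\psi_0}=\widehat{\widecheck g}*\widehat{\overline{\widecheck g}}=g*\widetilde g$ with $\widetilde g(\boldsymbol\xi):=\overline{g(-\boldsymbol\xi)}=g(-\boldsymbol\xi)$; being the convolution of two nonnegative functions supported in $B(\mathbf 0,1)$, $\widehat{\psi_0}$ is nonnegative and supported in $B(\mathbf 0,2)$, giving property (1). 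For property (2), Plancherel gives $\int_{\R^d}\psi_0=\norm{\widecheck g}_{L^2}^{2}=\norm{g}_{L^2}^{2}>0$; in particular $\psi_0(\mathbf 0)=|\widecheck g(\mathbf 0)|^{2}=\bigl(\int g\bigr)^{2}>0$.

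It remains to upgrade the pointwise positivity at the origin to the quantitative lower bound (3) on a ball of the prescribed radius $C_d$. By continuity of $\psi_0$ there is $\rho>0$ with $\psi_0\geq\tfrac12\psi_0(\mathbf 0)$ on $B(\mathbf 0,\rho)$. Given $C_d$, put $\mu:=\rho/C_d$ and $\lambda:=2/\psi_0(\mathbf 0)$ and define $\psi(\mathbf x):=\lambda\,\psi_0(\mu\mathbf x)$. Then $\psi\geq 1$ on $B(\mathbf 0,C_d)$ by construction, $\psi\geq 0$ and smooth, $\widehat\psi(\boldsymbol\xi)=\lambda\mu^{-d}\,\widehat{\psi_0}(\boldsymbol\xi/\mu)$ is still nonnegative and compactly supported (now in $B(\mathbf 0,2\mu)$), and $\int\psi=\lambda\mu^{-d}\int\psi_0>0$. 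Thus all three properties are preserved under the dilation and the multiplication by a positive constant, and $\psi$ is the desired function.

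I do not expect a genuine obstacle here; the construction is routine once one commits to $\psi=|\widecheck g|^{2}$ rather than attempting to impose nonnegativity of $\psi$ and of $\widehat\psi$ independently, which would be awkward to arrange. The only points deserving a little care are the Fourier-transform bookkeeping (the reflection hidden in $\widetilde g$ and the $\mu^{-d}$ Jacobian) so that the claimed support radii are accurate, and checking that the dilation parameter $C_d$ produced here is compatible with the value of $C_d$ implicitly required in the domination argument of \eqref{smoothdomination}, but both are elementary.
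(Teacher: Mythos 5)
Your construction is correct and is essentially the paper's own argument: both take $\psi$ to be the squared modulus of the inverse Fourier transform of a nonnegative compactly supported bump (so that $\hat\psi$ is the autoconvolution, hence nonnegative with compact support), then use continuity near the origin plus a dilation and constant multiple to force $\psi\geq 1$ on $B(\mathbf 0,C_d)$. The only differences are bookkeeping (the paper normalizes the bump so $\psi_0(\mathbf 0)>10$ before rescaling, while you rescale by $\lambda,\mu$ afterward), which does not change the proof.
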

 \begin{proof}
     Take a non-zero nonnegative smooth function $\hat{\phi}$ with compact support contained in $B(\mathbf 0,1/2).$ Then  $\mathcal F^{-1}(\hat{\phi}*\bar{\hat{\phi}})=\phi^2$ is nonnegative. Furthermore, supp$(\hat{\phi}*\bar{\hat{\phi}})\subseteq B(\mathbf 0,1)$ and by definitions of convolution and conjugate, $\hat{\phi}*\bar{\hat{\phi}}=\hat{\phi}*\hat{\phi}$ is also non-negative, non-zero.
     % Note that $|\hat{\phi}*\bar{\hat{\phi}}|\leq 1$. 
     By multiplying a factor of $\frac{100}{\int \hat{\phi}*\bar{\hat{\phi}}}$ if necessary, we can assume $\int \hat{\phi}*\bar{\hat{\phi}}=c$ for some $c>10.$ This implies that 
     \begin{equation*}
         \phi^2(0)=\int \hat{\phi}*\bar{\hat{\phi}}=c>10.
     \end{equation*}
     By continuity, there is a $\eps_o>0$ depending on $\hat \phi$ such that for all $|\mathbf x|\leq \eps_o$,
     \begin{equation*}
         \phi^2(\mathbf x)\geq 5.
     \end{equation*}
     Rescaling the function, we have 
     \begin{equation}\label{rescaling}
         \phi^2(\frac{\eps_o \mathbf x}{C_d})\geq 5
     \end{equation} for $\mathbf x\in B(\mathbf 0,C_d)$. Define 
    \begin{equation*}
        \psi(\mathbf x):= \phi^2(\frac{\eps_o \mathbf x}{C_d}).
    \end{equation*}
    Apparently its integral is positive so item $(2)$ is satisfied and item $(3)$ is guaranteed by \eqref{rescaling}. For item $(1)$, it suffices to notice that the dilation factor $\frac{\eps_o \mathbf x}{C_d}$ will not affect the compactness in frequency space and the non-negativity of the function. 
    % are also satisfied in the construction.
 \end{proof}
 \subsubsection{Smooth version domination}\label{smoothversiondomination} In this part we prove \eqref{smoothdomination}. For convenience, recall our goal is to prove for $r\in [1/2,2]$
 \begin{equation}\label{292}
    \frac{1}{\mathcal L^d( H_\delta(\mathbf x,r))}\int_{H_\delta(\mathbf x,r)}|f|\lesssim  \chi_1(r)\int_{\R^d}f(\mathbf x-r\gamma(u)-(0,\tilde{\boldsymbol{t}}))\psi_{\delta}(\tilde{\boldsymbol{t}})d\tilde{\boldsymbol{t}}\chi_0(u)du.
\end{equation}
\begin{proof}[Proof of \eqref{292}]
    Assume $f\geq 0$, up to some constant, by change of variable $\mathbf y\to \mathbf y+\mathbf x$, we have the left hand side 
 \begin{equation*}
      \frac{1}{\mathcal L^d( H_\delta(\mathbf x,r))}\int_{H_\delta(\mathbf x,r)}f(\mathbf y)d\mathbf y \lesssim \frac{1}{\delta^{d-1}}\int_{H_\delta(\mathbf 0,r)}f(\mathbf x+\mathbf y)d\mathbf y. 
 \end{equation*}
 We claim 
 \begin{lemma}
     $H_\delta(\mathbf 0,r)\subseteq \{r\gamma(u)+\{0\}\times B^{d-1}(\mathbf 0,C_d\delta): u\in [-1.2,1.2]\}=:H_\delta^{ext}(r)$, where $B^{d-1}(\mathbf 0,C_d)$ is a $d-1$ dimensional ball and $C_d$ is a constant that depends on $d$.
 \end{lemma}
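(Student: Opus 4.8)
The plan is to prove the inclusion pointwise, selecting for each point of $H_\delta(\mathbf 0,r)$ a curve parameter dictated by its first coordinate. Fix $\mathbf y=(y_1,\dots,y_d)\in H_\delta(\mathbf 0,r)$. By the definition of the isotropic $\delta$-neighborhood there is $t_0\in[-1,1]$ with $|\mathbf y-r\gamma(t_0)|\le\delta$; in particular $|y_1-rt_0|\le\delta$. Since $r\in[1/2,2]$, I would set $u:=y_1/r$, so that
\[
|u-t_0|=\frac{|y_1-rt_0|}{r}\le 2\delta,
\]
whence $u\in[-1-2\delta,\,1+2\delta]\subseteq[-1.2,1.2]$ as soon as $\delta\le\tfrac1{20}$ (which we may assume throughout, since only $\delta\to0$ is relevant). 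This choice makes the first coordinate of $\mathbf b:=\mathbf y-r\gamma(u)$ vanish, because the first coordinate of $r\gamma(u)$ is exactly $ru=y_1$.

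It then remains to control the size of $\mathbf b$. Splitting $\mathbf b=(\mathbf y-r\gamma(t_0))+r\bigl(\gamma(t_0)-\gamma(u)\bigr)$ and applying the triangle inequality together with the mean value inequality on $[-1.2,1.2]$ (which contains both $t_0$ and $u$, hence the segment between them),
\[
|\mathbf b|\le|\mathbf y-r\gamma(t_0)|+r\,|\gamma(t_0)-\gamma(u)|\le\delta+2\Bigl(\sup_{|t|\le1.2}|\gamma'(t)|\Bigr)\cdot 2\delta=:C_d\,\delta,
\]
where $\sup_{|t|\le1.2}|\gamma'(t)|$ is a finite quantity depending only on $d$ since $\gamma'(t)=(1,2t,\dots,dt^{d-1})$. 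Thus $\mathbf b\in\{0\}\times B^{d-1}(\mathbf 0,C_d\delta)$ and $\mathbf y=r\gamma(u)+\mathbf b\in H_\delta^{ext}(r)$, which is precisely the asserted inclusion.

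There is essentially no obstacle in this argument; the only point worth flagging is the choice $u=y_1/r$ in place of the naive $u=t_0$, which is exactly what forces the error vector $\mathbf b$ to have vanishing first coordinate and therefore to lie in $\{0\}\times B^{d-1}$. Once the lemma is available, \eqref{292} follows by bounding $\int_{H_\delta(\mathbf 0,r)}f(\mathbf x+\mathbf y)\,d\mathbf y$ by $\int_{H_\delta^{ext}(r)}f(\mathbf x+\mathbf y)\,d\mathbf y$, rewriting the latter as an iterated integral in $u\in[-1.2,1.2]$ and $\tilde{\mathbf t}\in B^{d-1}(\mathbf 0,C_d\delta)$ (the Jacobian of $(u,\tilde{\mathbf t})\mapsto r\gamma(u)+(0,\tilde{\mathbf t})$ is $r$, comparable to $1$), and invoking property (3) of $\psi$, so that $\psi_\delta\gtrsim\delta^{-(d-1)}$ on $B^{d-1}(\mathbf 0,C_d\delta)$; this dominates the average by a constant multiple of $\mathfrak{A}_\delta f(\mathbf x,r)$.
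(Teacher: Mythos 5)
Your proof is correct and follows essentially the same route as the paper: the choice $u=y_1/r$ is exactly the paper's $u=t+\tfrac{s_1}{r}$ (since $y_1=rt+s_1$), and both arguments then use the triangle inequality plus the mean value bound on $\gamma$ over $[-1.2,1.2]$ to place the error vector in $\{0\}\times B^{d-1}(\mathbf 0,C_d\delta)$. No gaps to report.
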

 \begin{proof}
     For each $\mathbf y\in H_\delta(\mathbf 0,r),\ \exists t\in[-1,1]$ such that 
        \begin{equation}\label{deltanbhd}
         \mathbf y-r\gamma(t)=\mathbf s,
     \end{equation}
     where $|\mathbf s|\leq\delta$. In particular,
     \begin{equation}\label{y1}
         y_1=rt+s_1=r(t+\frac{s_1}{r}).
     \end{equation}
     let $u=t+\frac{s_1}{r}$ then $|u|=|t+\frac{s_1}{r}|\leq 1+2\delta\leq 1.2$ as long as $\delta$ is small enough.
     By triangle inequality, we obtain
     \begin{equation*}
         |\mathbf y-r\gamma(u)|\leq|\mathbf y-r\gamma(t)|+r|\gamma(t)-\gamma(u)|.
     \end{equation*}
     The first term is $\leq\delta$ due to \eqref{deltanbhd}. For the second term, by the smoothness of $\gamma$ and mean value theorem, it is smaller than $c_d|t-u|\leq 2c_d\delta$ for some constant $c_d$ depending only on dimension. Let $C_d=2c_d+1$. Then $  |\mathbf y-r\gamma(u)|\leq C_d\delta$. By the definition of $\mathbf y$ and \eqref{y1}, we have 
     \begin{equation*}
         y_1=ru,
     \end{equation*}
     which means that if we denote $\mathbf y=r\gamma(u)+\mathbf s'$, then $\mathbf s'=\mathbf y-r\gamma(u)=(0,\tilde{\mathbf s'})\in \{0\}\times B^{d-1}(\mathbf 0,C_d\delta)$ where $\tilde{\mathbf s'}$ is the last $d-1$ coordinates of $\mathbf{s'}$. This concludes the proof.
 \end{proof}
Back to our computation, enlarging the domain of the integration, we have 
 \begin{equation*}
     \frac{1}{\delta^{d-1}}\int_{H_\delta(\mathbf 0,r)}f(\mathbf x+\mathbf y)d\mathbf y\lesssim \frac{1}{\delta^{d-1}}\int_{H_\delta^{ext}(r)}f(\mathbf x+\mathbf y)d\mathbf y .
 \end{equation*}
 For the right hand side, since 
 \begin{equation}\label{twist}
     \mathbf y=r\gamma(u)+{\mathbf s'} =r\gamma(u)+(0,\tilde{\mathbf s'}),
 \end{equation}
 It is easy to check that 
 \begin{equation*}
     d\mathbf y= r dud\tilde{\mathbf s'}\leq 2dud\tilde{\mathbf s'},
 \end{equation*}
 In fact, the Jacobian matrix of the map \eqref{twist} is 
 \[
\begin{pmatrix}
r & 0 & 0 & \cdots & 0 \\
2ru & 1 & 0 & \cdots & 0 \\
3ru^2 & 0 & 1 & \cdots & 0 \\
\vdots & \vdots & \vdots & \ddots & \vdots \\
d r u^{d-1} & 0 & 0 & \cdots & 1
\end{pmatrix}
\]

 By change of variable and Fubini-Tonelli, we expand the integral and use characteristic functions to limit the domain of integration,
\begin{equation}\label{rhssss}
\begin{aligned}
     \frac{1}{\delta^{d-1}}&\int_{H_\delta^{ext}}f(\mathbf x+\mathbf y)d\mathbf y\\ &\lesssim \frac{1}{\delta^{d-1}}\int_{\R^d}f(\mathbf x+r\gamma(u)+(0,\tilde{\mathbf s'}))\chi_{[-1.2,1.2]}(u)\chi_{B^{d-1}(\mathbf 0,C_d\delta)}(\tilde{\mathbf s'})dud\tilde{\mathbf s'}\\
     &=\frac{1}{\delta^{d-1}}\int_{\R^d}f(\mathbf x-r\gamma(u)-(0,\tilde{\mathbf s'}))\chi_{[-1.2,1.2]}(u)\chi_{B^{d-1}(\mathbf 0,C_d\delta)}(\tilde{\mathbf s'})dud\tilde{\mathbf s'}.
\end{aligned}
\end{equation}
Recall $\chi_1(r):\R\to \R$ is a non-negative function that is
% with compact Fourier transform and 
compactly supported in $[1/4,3]$ and $\chi_1(r)=1 $ for $r\in[1/2,1]$ and  $\chi_0(u):\R\to \R$ is a non-negative function that is
% with compact Fourier transform and 
compactly supported in $[-2,2]$ and $\chi_0(u)=1 $ for $u\in [-1.5,1.5]$.
So trivially we have,
\begin{align*}
    \chi _{[1/2,2]}(r)&\lesssim \chi_1(r)\\
    \chi_{[-1.2,1.2]}(u)&\lesssim \chi_0(u).
\end{align*}
Comparing the right hand side of \eqref{rhssss} with the right hand side of \eqref{292}, it suffices to prove 
\begin{equation*}
    \chi_{B^{d-1}(\mathbf 0,C_d\delta)}(\tilde{\mathbf s'})\lesssim \psi(\frac{\tilde{\mathbf s'}}{\delta}),
\end{equation*}
which is immediate from item $\eqref{item3}$ of the property of $\psi.$ So we conclude the proof of \eqref{smoothdomination}.
\end{proof}
\subsection{Proof of Lemma \ref{fast decay for m_k}}\label{app fast decay for m_k} Let us recall that our goal is to find some $C_{M,J}$ such that 
\begin{equation*}
       |(D_{\boldsymbol{\xi}}^\mathbf x)^M(m_k)(\boldsymbol{\xi})|\leq C_{M,J} \frac{1}{|\mathbf x|^{M}} 2^{-kJ},
\end{equation*}
where 
  \begin{equation*}
          m_k(\boldsymbol \xi)=  \int_\R e^{-2\pi ir\boldsymbol{\xi}\cdot \gamma(u)}\chi_0(u)\mathfrak{c}(\boldsymbol{\xi})\varphi_k(\boldsymbol{\xi})du.
    \end{equation*}
\begin{proof}[Proof of Lemma \ref{fast decay for m_k}]
   In what following, the constant $C_M$ and other constants may change from line to line. By Leibniz's rule,
   \begin{equation}\label{dmk}
       \begin{aligned}
(&D_{\boldsymbol{\xi}}^\mathbf x)^M(m_k)(\boldsymbol{\xi})\\&=\frac{1}{(2\pi i |\mathbf x|^2)^M}\sum_{j=0}^MC_{j,M}\int (\nabla_{\boldsymbol \xi}\cdot \mathbf x)^j e^{-2\pi i \boldsymbol{\xi}\cdot \gamma(u)}(\nabla_{\boldsymbol \xi}\cdot \mathbf x)^{M-j}(\mathfrak{c}(\boldsymbol{\xi})\varphi_k(\boldsymbol{\xi}))\chi(u)du\\
    &=\frac{C}{|\mathbf x|^{2M}}\sum_{j=0}^MC_{j,M}\int (-2\pi i \mathbf x\cdot \gamma(u))^j e^{-2\pi i \boldsymbol{\xi}\cdot \gamma(u)}(\nabla_{\boldsymbol \xi}\cdot \mathbf x)^{M-j}(\mathfrak{c}(\boldsymbol{\xi})\varphi_k(\boldsymbol{\xi}))\chi(u)du.
\end{aligned}
   \end{equation}

Since the number of the terms is $M+1$ and the coefficient $C_{j,M}$ (in fact they are binomial coefficients) depends only on $j$ and $M$. So it suffices to consider each fixed $j$-th term and take the final $C_{M,J}$ as the maximum of coefficients we get from each term.

Fix $j$, since the integral is over variable $u$, we can extract the $\boldsymbol{\xi}$ part.
\begin{align*}
    &\int (-2\pi i \mathbf x\cdot \gamma(u))^j e^{-2\pi i \boldsymbol{\xi}\cdot \gamma(u)}(\nabla_{\boldsymbol \xi}\cdot \mathbf x)^{M-j}(\mathfrak{c}(\boldsymbol{\xi})\varphi_k(\boldsymbol{\xi}))\chi(u)du\\
    =&\left(\int (-2\pi i \mathbf x\cdot \gamma(u))^j e^{-2\pi i \boldsymbol{\xi}\cdot \gamma(u)}\chi(u)du\right)(\nabla_{\boldsymbol \xi}\cdot \mathbf x)^{M-j}(\mathfrak{c}(\boldsymbol{\xi})\varphi_k(\boldsymbol{\xi})).
\end{align*}
Applying change of variable $\boldsymbol{\xi}\to 2^k\boldsymbol{\xi}$, we obtain 
\begin{align*}
    &\left(\int (-2\pi i \mathbf x\cdot \gamma(u))^j e^{-2\pi i \boldsymbol{\xi}\cdot \gamma(u)}\chi(u)du\right)(\nabla_{\boldsymbol \xi}\cdot \mathbf x)^{M-j}(\mathfrak{c}(\boldsymbol{\xi})\varphi_k(\boldsymbol{\xi}))\\
    =&\left(\int (-2\pi i \mathbf x\cdot \gamma(u))^j e^{-2\pi i 2^k \boldsymbol{\xi}\cdot \gamma(u)}\chi(u)du\right)(\nabla_{\boldsymbol \xi}\cdot \mathbf x)^{M-j}(\mathfrak{c}(2^k\boldsymbol{\xi})\varphi_0(\boldsymbol{\xi})).
\end{align*}
Note that the domain of the integration is $\{\boldsymbol{\xi}:|\boldsymbol{\xi}|\in (1/2,2]\}$ and differentiation does not affect the support of the functions, so 
\begin{equation}\label{intbyparts2}
\begin{aligned}
    &\left(\int (-2\pi i \mathbf x\cdot \gamma(u))^j e^{-2\pi i 2^k \boldsymbol{\xi}\cdot \gamma(u)}\chi(u)du\right)(\nabla_{\boldsymbol \xi}\cdot \mathbf x)^{M-j}(\mathfrak{c}(2^k\boldsymbol{\xi})\varphi_0(\boldsymbol{\xi}))\\
    =&\left(\int (-2\pi i \mathbf x\cdot \gamma(u))^j e^{-2\pi i 2^k \boldsymbol{\xi}\cdot \gamma(u)}\chi(u)du(\mathfrak{c}(2^k\boldsymbol{\xi})\varphi_0(\boldsymbol{\xi}))\right)\\ &\times (\nabla_{\boldsymbol \xi}\cdot \mathbf x)^{M-j}(\mathfrak{c}(2^k\boldsymbol{\xi})\varphi_0(\boldsymbol{\xi})).
\end{aligned}
\end{equation}

We first tackle the 
$(\nabla_{\boldsymbol \xi}\cdot \mathbf x)^{M-j}(\mathfrak{c}(2^k\boldsymbol{\xi})\varphi_0(\boldsymbol{\xi}))$ part. For this part we have the following conclusion, 
\begin{lemma}\label{secondpartlemma} There is a constant $C_{j,M},$ such that
    \begin{equation}\label{second part of int by parts}
        |(\nabla_{\boldsymbol \xi}\cdot \mathbf x)^{M-j}(\mathfrak{c}(2^k\boldsymbol{\xi})\varphi_0(\boldsymbol{\xi}))|\leq C_{j,M} 2^{kM}|\mathbf x|^{M-j}.
    \end{equation}
\end{lemma}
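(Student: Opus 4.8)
The plan is to expand $(\nabla_{\boldsymbol\xi}\cdot\mathbf x)^{M-j}=(\mathbf x\cdot\nabla_{\boldsymbol\xi})^{M-j}$ by the Leibniz rule and the chain rule, reducing everything to a bound on the derivatives of $\mathfrak c$ that is uniform in $\delta$. First I would write
\begin{equation*}
    (\nabla_{\boldsymbol\xi}\cdot\mathbf x)^{M-j}\bigl(\mathfrak c(2^k\boldsymbol\xi)\varphi_0(\boldsymbol\xi)\bigr)=\sum_{a+b=M-j}\binom{M-j}{a}\bigl[(\nabla_{\boldsymbol\xi}\cdot\mathbf x)^a\mathfrak c(2^k\boldsymbol\xi)\bigr]\bigl[(\nabla_{\boldsymbol\xi}\cdot\mathbf x)^b\varphi_0(\boldsymbol\xi)\bigr].
\end{equation*}
For the $\varphi_0$ factor, since $(\mathbf x\cdot\nabla_{\boldsymbol\xi})^b=\sum_{|\alpha|=b}\binom b\alpha\mathbf x^\alpha\partial_{\boldsymbol\xi}^\alpha$ and $|\mathbf x^\alpha|\le|\mathbf x|^{|\alpha|}$, one has $|(\nabla_{\boldsymbol\xi}\cdot\mathbf x)^b\varphi_0|\le C_b|\mathbf x|^b\max_{|\alpha|=b}\|\partial^\alpha\varphi_0\|_{L^\infty}$, and $\varphi_0$ is a fixed $C_c^\infty$ bump function (independent of $k$ and $\delta$). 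For the $\mathfrak c(2^k\boldsymbol\xi)$ factor, the chain rule produces one factor $2^k$ per derivative:
\begin{equation*}
    (\nabla_{\boldsymbol\xi}\cdot\mathbf x)^a\bigl[\mathfrak c(2^k\boldsymbol\xi)\bigr]=2^{ka}\sum_{|\alpha|=a}\binom a\alpha\mathbf x^\alpha(\partial^\alpha\mathfrak c)(2^k\boldsymbol\xi),
\end{equation*}
so $\bigl|(\nabla_{\boldsymbol\xi}\cdot\mathbf x)^a[\mathfrak c(2^k\boldsymbol\xi)]\bigr|\le C_a\,2^{ka}|\mathbf x|^a\max_{|\alpha|\le a}\|\partial^\alpha\mathfrak c\|_{L^\infty(\R^d)}$.

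The one step that uses the structure of $\mathfrak c$ — and the only mildly delicate point — is the claim $\|\partial^\alpha\mathfrak c\|_{L^\infty(\R^d)}\le C_\alpha$ with $C_\alpha$ independent of $\delta\in(0,1]$ and of $k$. I would prove this by a further Leibniz expansion of $\mathfrak c=\widehat\psi(\delta\tilde{\boldsymbol\xi})\cdot(1-\mathfrak a_d(\boldsymbol\xi))\cdot(1-\phi(\boldsymbol\xi))$: derivatives hitting $\widehat\psi(\delta\tilde{\boldsymbol\xi})$ vanish in $\xi_1$ and otherwise cost a factor $\delta^{|\beta|}\le1$ times $\|\partial^\beta\widehat\psi\|_{L^\infty}$; $1-\mathfrak a_d$ is a fixed smooth function with bounded derivatives; and for $1-\phi$ one notes that on $\operatorname{supp}(1-\mathfrak a_d)$ we have $|\boldsymbol\xi|\ge1$, while any derivative $\partial^\alpha\phi$ with $|\alpha|\ge1$ is supported where $\mathfrak a_1'\bigl(\xi_1/((100d)^d|\tilde{\boldsymbol\xi}|)\bigr)\neq0$, i.e.\ where $|\tilde{\boldsymbol\xi}|\sim|\boldsymbol\xi|\gtrsim1$, and on that set the chain-rule derivatives of $\xi_1/|\tilde{\boldsymbol\xi}|$ are homogeneous of nonpositive degree in $\boldsymbol\xi$, hence bounded; where $\phi$ is locally constant (e.g.\ near $\tilde{\boldsymbol\xi}=0$ with $\xi_1\neq0$, where $\phi\equiv0$) its higher derivatives vanish. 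This yields the uniform bound.

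Combining the three estimates, each summand with $a+b=M-j$ is bounded by $C_{a,b}\,2^{ka}|\mathbf x|^{a+b}=C_{a,b}\,2^{ka}|\mathbf x|^{M-j}$; since $0\le a\le M-j\le M$ and $2^k\ge1$ we have $2^{ka}\le2^{kM}$, and summing the $O_M(1)$ terms gives $\bigl|(\nabla_{\boldsymbol\xi}\cdot\mathbf x)^{M-j}(\mathfrak c(2^k\boldsymbol\xi)\varphi_0(\boldsymbol\xi))\bigr|\le C_{j,M}\,2^{kM}|\mathbf x|^{M-j}$, which is \eqref{second part of int by parts}. I expect the main obstacle to be the uniform-in-$\delta$ control of the derivatives of $\phi$ near $\tilde{\boldsymbol\xi}=0$; the rest is routine bookkeeping with Leibniz and the chain rule.
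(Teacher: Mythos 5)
Your proof is correct and follows essentially the same route as the paper: a Leibniz expansion of $(\mathbf x\cdot\nabla_{\boldsymbol\xi})^{M-j}$ into the two factors, the chain rule producing one factor of $2^k$ per derivative landing on $\mathfrak{c}(2^k\cdot)$, uniform $L^\infty$ bounds on the derivatives of $\mathfrak{c}$ and $\varphi_0$, and finally $2^{ka}\leq 2^{kM}$. The only difference is that you verify the $\delta$- and $k$-uniform boundedness of $\partial^\alpha\mathfrak{c}$ explicitly (including the behavior of $\phi$ away from $\tilde{\boldsymbol\xi}=0$), a point the paper merely asserts.
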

\begin{proof}
By Leibniz's rule,  
\begin{equation}\label{leibniz2}
    (\nabla_{\boldsymbol \xi}\cdot \mathbf x)^{M-j}(\mathfrak{c}(2^k\boldsymbol{\xi})\varphi_0(\boldsymbol{\xi}))=\sum_{i}C_{i,j,M} (\nabla_{\boldsymbol \xi}\cdot \mathbf x)^{M-j-i}\mathfrak{c}(2^k\boldsymbol{\xi})(\nabla_{\boldsymbol \xi}\cdot \mathbf x)^i\varphi_0(\boldsymbol{\xi}).
\end{equation}
Similar to what we stated at the beginning of the proof, $i$ is essentially dependent on $j$ and $M$. Therefore it suffices to consider one fixed term $(\nabla_{\boldsymbol \xi}\cdot \mathbf x)^{M-j-i}\mathfrak{c}(2^k\boldsymbol{\xi})(\nabla_{\boldsymbol \xi}\cdot \mathbf x)^i\varphi_0(\boldsymbol{\xi})$. After computations, we obtain 
\begin{equation*}
    (\nabla_{\boldsymbol \xi}\cdot \mathbf x)^{M-j-i}\mathfrak{c}(2^k\boldsymbol{\xi})=2^{k(M-i-j)}\sum_{\substack{|\alpha|=M-i-j\\|\beta|=M-i-j}}(\nabla_{\boldsymbol{\xi}}^{\alpha}\mathfrak{c})(2^k\boldsymbol{\xi}){\mathbf x}^{\beta},
\end{equation*}
and 
\begin{equation*}
    (\nabla_{\boldsymbol \xi}\cdot \mathbf x)^i\varphi_0(\boldsymbol{\xi})=\sum_{\substack{|\alpha|=i\\|\beta|=i}}(\nabla_{\boldsymbol{\xi}}^{\alpha}\varphi_0)(\boldsymbol{\xi}){\mathbf x}^{\beta}.
\end{equation*}
Where $\alpha=(\alpha_1,...,\alpha_d),\ \beta=(\beta_1,...,\beta_d)$ are multi-indices and as conventions, $|\alpha|=\alpha_1+...+\alpha_d$ $\nabla_{\boldsymbol{\xi}}^\alpha=(\p_1^{\alpha_1},...,\p_d^{\alpha_d})$ and $\mathbf x^\beta=(x_1^{\beta_1},...,x_d^{\beta_d})$. 

Since the number of the terms in the summation depends on $i,\ j$ and $M$ and all partial derivatives of $\mathfrak{c}$ and $\varphi_0$ are bounded (bounds may depend on the order $\alpha$ hence can be bounded uniformly by a constant depending on $M$). Therefore we can find a constant $C_{i,j,M}$ such that 
\begin{align*}
    | (\nabla_{\boldsymbol \xi}\cdot \mathbf x)^{M-j-i}\mathfrak{c}(2^k\boldsymbol{\xi})|\times |(\nabla_{\boldsymbol \xi}\cdot \mathbf x)^i\varphi_0(\boldsymbol{\xi})|&\leq C_{i,j,M} 2^{k(M-i-j)}|\mathbf x|^{|\alpha|+|\beta|}\\
    &=C_{i,j,M} 2^{k(M-i-j)}|\mathbf x|^{M-j}.
\end{align*}
Plug this back to \eqref{leibniz2}, we obtain 
\begin{equation*}
    |\mathrm{LHS~of~\eqref{leibniz2}}|\leq \sum_i C_{i,j,M} 2^{k(M-i-j)}|\mathbf x|^{M-j}\leq \sum_i C_{i,j,M} 2^{kM}|\mathbf x|^{M-j}
\end{equation*}
As we explained, taking maximum over $i$, we get the desired \eqref{second part of int by parts}.
\end{proof}

We now turn to tackle the oscillatory integral part. This is just running the integration by parts argument of Stein's book \cite[Proposition 1, Chapter VIII]{bible}. To maintain the information of the support of the functions, we denote
\begin{align*}
  &\int (-2\pi i \mathbf x\cdot \gamma(u))^j e^{-2\pi i 2^k \boldsymbol{\xi}\cdot \gamma(u)}\chi(u)du(\mathfrak{c}(2^k\boldsymbol{\xi})\varphi_0(\boldsymbol{\xi}))\\
  =&\int \psi(u,\boldsymbol{\xi},\mathbf x)e^{i2^k\phi(\boldsymbol{\xi},u)}du\\
  =&\int_{-1.5}^{1.5} \psi(u,\boldsymbol{\xi},\mathbf x)e^{i2^k\phi(\boldsymbol{\xi},u)}du
\end{align*}
The amplitude function is 
\begin{equation*}
    \psi(u,\boldsymbol{\xi},\mathbf x)=(-2\pi i \mathbf x\cdot \gamma(u))^j\chi(u)\mathfrak{c}(2^k\boldsymbol{\xi})\varphi_0(\boldsymbol{\xi})
\end{equation*} 
satisfying that 
\begin{equation}\label{psi}
    |\psi|\leq C_j |\mathbf x|^j.
\end{equation}
 The phase function $\phi(u,\boldsymbol{\xi})$ is 
 \begin{equation*}
     \phi(u,\boldsymbol{\xi})=-2\pi \boldsymbol{\xi}\cdot \gamma(u)=-2\pi (\sum_{i=1}^d\xi_iu^d),
 \end{equation*}
 whose derivative (as a function of $u$) is 
 \begin{equation*}
     \phi'(u,\boldsymbol{\xi})=-2\pi(\boldsymbol{\xi}\cdot \gamma(u))'=-2\pi(\xi_1+2u\xi_2+...+du^{d-1}\xi_d).
 \end{equation*}
 Due to the existence of $\mathfrak{c}$ and $\varphi_0$, it is easy to verify that there is no critical point in the supported of this two functions and  for all $\boldsymbol{\xi}\in$ supp$(\mathfrak{a})\cap $ supp$(\varphi_0)$ and $u\in [-1.5,1.5]$,
 \begin{equation*}
     |\phi'(u,\boldsymbol{\xi})|\geq c_d>0.
 \end{equation*}
 Similar to the argument in the proof of Lemma \ref{intbypar}, after applying an integration by parts argument for $J$ times, we obtain (details can be found in Stein's book)
 \begin{equation}\label{firstpartintbyparts}
     \int_{-1.5}^{1.5} \psi(u,\boldsymbol{\xi},\mathbf x)e^{i2^k\phi(\boldsymbol{\xi},u)}du=\int_{-1.5}^{1.5}  ({}^tD)^J(\psi)\cdot e^{i2^k\phi(\boldsymbol{\xi},u)} du,
 \end{equation}
 for a differential operator with the following form,
 \begin{equation*}
     {}^tD(f)(u):=-\frac{d}{du}\left(\frac{f}{2\pi i2^k\phi'(u,\boldsymbol{\xi})}\right).
 \end{equation*}
We will use induction to prove that 
\begin{lemma}\label{firstpartlemma} For any $J$, there is a constant $C_{j,J}$ such that 
    \begin{equation}\label{first part estimate}
        |({}^tD)^J(\psi)|\leq C_{j,J}2^{-kJ}|\mathbf x|^j.
    \end{equation}
\end{lemma}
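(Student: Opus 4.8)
The plan is to prove \eqref{first part estimate} by induction on $J$, carrying along an explicit normal form for $({}^tD)^J(\psi)$. Write $a(u)=\frac{1}{2\pi i 2^k\phi'(u,\boldsymbol{\xi})}$, so that ${}^tD(g)=-\frac{d}{du}(ag)=-a'g-ag'$ with $a'=-\frac{\phi''}{2\pi i 2^k(\phi')^{2}}$; in particular each application of ${}^tD$ contributes exactly one factor $2^{-k}$. I claim that for every $J\ge 0$ one can write
\begin{equation*}
({}^tD)^J(\psi)=2^{-kJ}\sum_{l}\frac{P_{l}(u,\boldsymbol{\xi})\,\partial_u^{\mu_l}\psi(u,\boldsymbol{\xi},\mathbf x)}{\bigl(\phi'(u,\boldsymbol{\xi})\bigr)^{\nu_l}},
\end{equation*}
where the sum is finite with at most $3^{J}$ terms, $0\le\mu_l\le J$, $1\le\nu_l\le 2J$ (for $J\ge 1$), and each $P_l$ is a polynomial in $u$ and in the quantities $\partial_u^m\phi(u,\boldsymbol{\xi})=-2\pi\boldsymbol{\xi}\cdot\gamma^{(m)}(u)$, $1\le m\le J$, with coefficients depending only on $J$ and $d$.

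The base case $J=0$ is the trivial representation with a single term $P_0\equiv 1$, $\mu_0=\nu_0=0$. For the inductive step I apply ${}^tD=-\frac{d}{du}(a\,\cdot\,)$ termwise: multiplication by $a=1/(2\pi i 2^k\phi')$ extracts the extra $2^{-k}$ and raises each denominator to $\phi'^{\,\nu_l+1}$, and then $\frac{d}{du}$ falls on one of the three factors $P_l$, $\partial_u^{\mu_l}\psi$, $(\phi')^{-(\nu_l+1)}$, producing respectively $P_l'$ (still a polynomial in $u$ and the $\partial_u^m\phi$ with $m\le J+1$, coefficients depending only on $J,d$), $\partial_u^{\mu_l+1}\psi$, and $-(\nu_l+1)\phi''(\phi')^{-(\nu_l+2)}$. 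Every one of the (at most three) resulting summands again has the stated shape, with the term count, $\mu$-range, and $\nu$-range updated as claimed, so the normal form persists.

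To conclude, I invoke three elementary facts already in place in this section: (i) on the relevant region, namely $u\in[-1.5,1.5]$ and $\boldsymbol{\xi}$ ranging over the rescaled annulus $|\boldsymbol{\xi}|\sim 1$ which is the support of $\mathfrak{c}(2^k\,\cdot\,)\varphi_0$, one has $|\phi'(u,\boldsymbol{\xi})|\ge c_d>0$, as observed just before \eqref{firstpartintbyparts}; (ii) on the same region $|\partial_u^m\phi(u,\boldsymbol{\xi})|=|2\pi\boldsymbol{\xi}\cdot\gamma^{(m)}(u)|\le C_d$ for every $m\ge 1$ (since $\gamma^{(m)}$ is a bounded polynomial on $[-1.5,1.5]$, vanishing for $m>d$), hence $|P_l(u,\boldsymbol{\xi})|\le C_{J,d}$; and (iii) $|\partial_u^{\mu}\psi(u,\boldsymbol{\xi},\mathbf x)|\le C_{j,\mu}|\mathbf x|^{j}$, because differentiation in $u$ touches only the factor $(-2\pi i\,\mathbf x\cdot\gamma(u))^{j}\chi(u)$ — a polynomial in $u$ that is homogeneous of degree $j$ in $\mathbf x$ and $O(|\mathbf x|^{j})$ on $\operatorname{supp}\chi$ — while $\mathfrak{c}(2^k\boldsymbol{\xi})\varphi_0(\boldsymbol{\xi})$ is bounded. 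Substituting (i)–(iii) into the normal form gives $|({}^tD)^J\psi|\le 3^{J}\,2^{-kJ}\,C_{J,d}\,\max(1,c_d^{-2J})\,\max_{\mu\le J}C_{j,\mu}\,|\mathbf x|^{j}\le C_{j,J}\,2^{-kJ}|\mathbf x|^{j}$, which is \eqref{first part estimate}.

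I do not expect a real obstacle here: this is the standard non-stationary phase integration-by-parts estimate (as in Stein's book \cite{bible}, Chapter VIII), and the only mildly delicate point is keeping the induction bookkeeping consistent — the tripling of the term count, the ranges $0\le\mu_l\le J$ and $1\le\nu_l\le 2J$, and the fact that the polynomials $P_l$ stay uniformly bounded on the support. An essentially equivalent alternative is to expand $(-\tfrac{d}{du}\,a\,)^{J}\psi$ directly by the Leibniz/Faà di Bruno rule, grouping terms according to how many derivatives land on $\psi$ versus on powers of $1/\phi'$; this avoids stating the normal form but uses exactly the same bounds (i)–(iii).
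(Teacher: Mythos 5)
Your proof is correct and follows essentially the same route as the paper's: an induction on the number of applications of ${}^tD$, with Leibniz/quotient-rule bookkeeping, the lower bound $|\phi'(u,\boldsymbol{\xi})|\geq c_d$ on the support, the boundedness of $\partial_u^m\phi$ there, and the bound $|\partial_u^{\mu}\psi|\lesssim_{j,\mu}|\mathbf x|^{j}$ — your explicit normal form is just a cleaner packaging of the paper's iterated induction, which is ultimately reduced to bounding $\frac{d^n}{du^n}\psi$. One harmless bookkeeping slip: the normal form should allow $\partial_u^m\phi$ with $m\le J+1$ rather than $m\le J$ (already $\phi''$ appears at $J=1$), which changes nothing since all $u$-derivatives of $\phi$ are uniformly bounded on the relevant region.
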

Take this for granted and plug \eqref{first part estimate} to \eqref{firstpartintbyparts}, we get 
\begin{equation}\label{firstpartfinalestimate}
   \left| \int_{-1.5}^{1.5} \psi(u,\boldsymbol{\xi},\mathbf x)e^{i2^k\phi(\boldsymbol{\xi},u)}du\right|=\left|\int_{-1.5}^{1.5} ({}^tD)^J(\psi)\cdot e^{i2^k\phi(\boldsymbol{\xi},u)} du\right|\leq C_{j,J} 2^{-kJ}|\mathbf x|^j.
\end{equation}

Plugging \eqref{firstpartfinalestimate} and \eqref{second part of int by parts} back to \eqref{intbyparts2}, we get 
\begin{equation*}
|\text{LHS of \eqref{intbyparts2}}|\leq C_{j,M}2^{-k(J-M)}|\mathbf{x}|^M.
\end{equation*}

Back to the estimate \eqref{dmk}, by choosing the the maximal coefficient, the dependence over $j$ disappears. So we have 
\begin{equation*}
    |(D_{\boldsymbol{\xi}}^\mathbf x)^M(m_k)(\boldsymbol{\xi})|\leq \frac{C_{M,J} }{|\mathbf x|^M}2^{-k(J-M)}.
\end{equation*}
This concludes the proof of Lemma \ref{fast decay for m_k}.
\end{proof}

\begin{proof}[Proof of Lemma \ref{firstpartlemma}]
   When $J=1,$ we have
    \begin{equation}\label{00case}
    \begin{aligned}
        |\psi'&|=\left|\frac{d}{du}({}^tD^0)\psi\right|\\&=|(j(-2\pi i)^j(\mathbf x\cdot \gamma(u))^{j-1}\phi'(\mathbf x)\chi(u)+(-2\pi i \mathbf x\cdot \gamma(u))^j\chi'(u))\mathfrak{c}(2^k\boldsymbol{\xi})\varphi_0(\boldsymbol{\xi})|\\
        &\leq C_j |\mathbf x|^j=C_j 2^{-k0}|\mathbf x|^j
    \end{aligned}
    \end{equation}
    Furthermore, we have 
    \begin{align*}
         |{}^tD(\psi)(u)|&=|\frac{d}{du}\left(\frac{\psi}{2\pi i2^k\phi'(u,\boldsymbol{\xi})}\right)|\\
         &= \frac{C_j}{2^k}|\frac{d}{du}\left(\frac{\psi}{\phi'(u)}\right)|\\
         &=\frac{C_j}{2^k}|\frac{\psi'\phi'(u)-\phi''(u)\psi}{|\phi'(u)|^2}|.
    \end{align*}
    Note that $|\boldsymbol{\xi}|\sim 1,$ so all derivatives of $\phi$ will be bounded (the bounds depend on order of the derivative.) So combining this with \eqref{psi} and \eqref{00case}, we obtain
    \begin{equation*}
         |{}^tD(\psi)(u)|\leq \frac{C|\mathbf x|^j}{2^k}.
    \end{equation*}

    Assume the inequality \eqref{first part estimate} and 
    \begin{equation}\label{induc2}
        \left|\frac{d}{du}({}^tD^{J-1})\psi\right|\leq C_j2^{-k(J-1)}|\mathbf x|^j,
    \end{equation} hold for all $J\leq n-1$. 
    When $J=n$, 
    \begin{equation*}
        \begin{aligned}
             |({}^tD)^n(\psi)(u)|&=\left|\frac{d}{du}\left(\frac{({}^tD)^{n-1}(\psi)(u)}{2\pi i2^k\phi'(u,\boldsymbol{\xi})}\right)\right|\\
         &= \frac{C_j}{2^k}\left|\frac{d}{du}\left(\frac{({}^tD)^n(\psi)(u)}{\phi'(u)}\right)\right|\\
         &=\frac{C_j}{2^k}\left|\frac{({}^tD^{n-1}\psi)'\phi'(u)-\phi''(u)({}^tD^{n-1}\psi)}{|\phi'(u)|^2}\right|.
        \end{aligned}
    \end{equation*}
    For the second term, we can use the induction hypothesis \eqref{first part estimate}, 
    \begin{equation*}
        \frac{C_j}{2^k}\left|\frac{\phi''(u)({}^tD^{n-1}\psi)}{|\phi'(u)|^2}\right|\leq C_{j,n-1}\frac{|\mathbf x|^j}{2^{kn}}.
    \end{equation*}
    For the first term, it suffices to prove 
    \begin{equation}\label{firsthardestimate}
        \left|\frac{d}{du}{}^tD^{n-1}(\psi)\right|\leq C_{j,n}2^{-k(n-1)}|\mathbf x|^{j}.
    \end{equation}
    Calculating the left hand side explicitly, we obtain, 
    \begin{equation*}
        \begin{aligned}
             &\left|\frac{d}{du}{}^tD^{n-1}(\psi)\right|=\left|\frac{d^2}{du^2}\frac{{}^tD^{n-2}(\psi)}{2^k\phi'}\right|=\frac{1}{2^k}\times \\
             &\left|\frac{\frac{d^2}{du^2}{}^tD^{n-2}\psi\cdot \phi-\phi'\frac{d}{du}{}^tD^{n-2}\psi}{|\phi'|^2}-\frac{\frac{d}{du}(\phi''{}^tD^{n-2}\psi)\phi'^2-2\phi'\phi''(\phi'' {}^tD^{n-2}\psi)}{|\phi'|^4}\right|.
        \end{aligned}
    \end{equation*}
    Except for the first term, all remaining terms can be tackled using induction hypothesis \eqref{00case} and \eqref{induc2}. Iterate this process for $O(n)$ steps. It turns out \eqref{firsthardestimate} will be reduced to prove the following  
    \begin{equation}\label{reduction}
        \frac{d^n}{du^n}(\psi)\leq C_{j,n}|\mathbf x|^j.
    \end{equation}
Recall that 
\begin{equation*}
    \psi(u,\boldsymbol{\xi},\mathbf x)=(-2\pi i \mathbf x\cdot \gamma(u))^j\chi(u)\mathfrak{c}(2^k\boldsymbol{\xi})\varphi_0(\boldsymbol{\xi}).
\end{equation*}
    This is another application of induction and  Leibniz's rule similar to the reasoning in the proof of Lemma \ref{secondpartlemma}. So we omit the proof.

    This concludes the proof of the case $J=n$ and we finish the proof of Lemma \ref{firstpartlemma}.
    \end{proof}

    \subsection{Verification of \eqref{symboldecaycondition}}\label{Verification of conditions}
    Recall 
    \[
    a(u,r,\boldsymbol{\xi})=\chi_0(u)\chi_1(r)\phi(\boldsymbol{\xi})\varphi_k(\tilde{\boldsymbol{\xi}}),
\]
and our goal is to verify 
\begin{equation}\label{last}
    |\p^j_u\p^l_r\p^\alpha_{\boldsymbol{\xi}}   a(u,r,\boldsymbol{\xi})|\leq B |\boldsymbol{\xi}|^{-|\alpha|},
\end{equation}
for $(j,l,\alpha)\in\{(j,l,\alpha): 0\leq j\leq 1, 0\leq l\leq 2d, |\alpha|\leq 2d+2\}$.

\begin{proof}
    Since $\chi_i,i=0,1$ are smoothing functions, as long as $B$ is large enough, all of their derivatives with finite order can be bounded by $B$. So it suffices to consider the decay of partial derivative with $\boldsymbol{\xi}.$ 
Note the support of $\phi(\boldsymbol{\xi})\varphi_k(\tilde{\boldsymbol{\xi}})$ is $B(\mathbf 0,O( 2^k))$ It suffices to prove that 
\begin{equation*}
     |\p^\alpha_{\boldsymbol{\xi}}   \phi(\boldsymbol{\xi})\varphi_k(\tilde{\boldsymbol{\xi}})|\leq B 2^{-k|\alpha|}.
\end{equation*}
We take $\alpha=(0,1,...,0)$ for example. For other cases, the computations are similar. For the general case, it is also another Leibniz's rule plus induction argument similar to the proof of Lemma \ref{firstpartlemma} combined with Lemma \ref{secondpartlemma}. So we omit the proof again.
\begin{align*}
    \frac{\p}{\p\xi_2}\phi(\boldsymbol{\xi})\varphi_k(\tilde{\boldsymbol{\xi}})&= \frac{\p}{\p\xi_2}\mathfrak{a}_1\left(\frac{\xi_1}{(100d)^d|\tilde{\boldsymbol{\xi}}|}\right)\varphi_k(\tilde{\boldsymbol{\xi}})\\
    &=\mathfrak{a}_1'\cdot \frac{\p }{\p \xi_2}\left(\frac{\xi_1}{(100d)^d|\tilde{\boldsymbol{\xi}}|}\right)\cdot \varphi_k(\tilde{\boldsymbol{\xi}})+\frac{\p \varphi_k}{\p \xi_2}\mathfrak{a}.
\end{align*}
It is elementary to verify that the absolute value of the second term  is  $\lesssim 2^{-k}$. Indeed, since $|\mathfrak{a}_1|\lesssim 1$, it suffices to consider $\frac{\p \varphi_k}{\p \xi_2}$.  
\begin{equation*}
    \left| \frac{\p \varphi_k}{\p \xi_2}(\tilde{\boldsymbol{\xi}})\right|=  \left| \frac{\p}{\p \xi_2} \varphi(2^{-k}\tilde{\boldsymbol{\xi}})\right|=|2^{-k}\varphi'_{\xi_2}(2^{-k}\tilde{\boldsymbol{\xi}})|\lesssim 2^{-k}.
\end{equation*}
We also have $|\mathfrak{a}'_1|\lesssim 1$, while for 
\begin{equation*}
     \left|\frac{\p }{\p \xi_2}\left(\frac{\xi_1}{(100d)^d|\tilde{\boldsymbol{\xi}}|}\right)\right|=\left|\frac{\xi_1\xi_2}{(100d)^{d}(\sum_{i=2}^d|\xi_i|^2)^{3/2}}\right|
\end{equation*}
since $\max\{|\xi_1,|\xi_2|\}|\leq 2(100d)^d(\sum|\xi_i|^2)^{1/2}\sim 2^{k}$, 
\begin{equation*}
    \left|\frac{\xi_1\xi_2}{(100d)^{d}(\sum_{i=2}^d|\xi_i|^2)^{3/2}}\right|\leq C_d\frac{1}{|\tilde{\boldsymbol{\xi}}|}\leq C_d 2^{-k}.
\end{equation*}
Note that the orders of the partial derivatives depends only on $d$, we can always choose sufficiently large $B$ such that \eqref{last} holds. This concludes the proof.
\end{proof}

\bibliographystyle{plain}
\bibliography{ref-2-2}

\begin{thebibliography}{10}

\bibitem{beltran2021sharplpboundshelical}
D.~Beltran, S.~Guo, J.~Hickman, and A.~Seeger.
\newblock Sharp ${L}^p$ bounds for the helical maximal function, 2021.

\bibitem{Besicovitch_1919}
A.~S. Besicovitch.
\newblock Sur deux questions de l'intégrabilité des fonctions.
\newblock {\em J. Soc. Phys. Math}, 2:105--123, 1919.

\bibitem{BR}
A.~S. Besicovitch and R.~Rado.
\newblock A plane set of measure zero containing circumferences of every radius.
\newblock {\em J. London Math. Soc.}, 43:717--719, 1968.

\bibitem{bourgain1984spherical}
J.~Bourgain.
\newblock {\em On the Spherical Maximal Function in the Plane}.
\newblock 1984.

\bibitem{Bourgain1991}
J.~Bourgain.
\newblock Besicovitch type maximal operators and applications to {F}ourier analysis.
\newblock {\em Geom. Funct. Anal.}, 1(2):147--187, 1991.

\bibitem{Cordoba_1977}
A.~Cordoba.
\newblock The {K}akeya maximal function and the spherical summation multipliers.
\newblock {\em Amer. J. Math.}, 99(1):1--22, 1977.

\bibitem{Davies_1971}
R.~O. Davies.
\newblock Some remarks on the {K}akeya problem.
\newblock {\em Proc. Cambridge Philos. Soc.}, 69:417--421, 1971.

\bibitem{gan2025sharplocalsmoothingestimates}
S.~Gan, D.~Maldague, and C.~Oh.
\newblock Sharp local smoothing estimates for curve averages, 2025.

\bibitem{GuthWangZhang}
L.~Guth, H.~Wang, and R.~Zhang.
\newblock A sharp square function estimate for the cone in {$\mathbb {R}^3$}.
\newblock {\em Ann. of Math. (2)}, 192(2):551--581, 2020.

\bibitem{Fractal}
S.~Ham, H.~Ko, and S.~Lee.
\newblock Circular average relative to fractal measures.
\newblock {\em Commun. Pure Appl. Anal.}, 21(10):3283--3307, 2022.

\bibitem{HKLO}
S.~Ham, H.~Ko, S.~Lee, and S.~Oh.
\newblock Remarks on dimension of unions of curves.
\newblock {\em Nonlinear Anal.}, 229:Paper No. 113207, 14, 2023.

\bibitem{Katz2000}
N.~H. Katz, I.~Laba, and T.~Tao.
\newblock An improved bound on the {M}inkowski dimension of {B}esicovitch sets in {${\mathbb R}^3$}.
\newblock {\em Ann. of Math. (2)}, 152(2):383--446, 2000.

\bibitem{Kinney}
J.~R. Kinney.
\newblock A {t}hin {s}et of {c}ircles.
\newblock {\em Amer. Math. Monthly}, 75(10):1077--1081, 1968.

\bibitem{ko2021maximalestimatesaveragesspace}
H.~Ko, S.~Lee, and S.~Oh.
\newblock Maximal estimates for averages over space curves.
\newblock {\em Invent. Math.}, 228(2):991--1035, 2022.

\bibitem{Ko_Lee_Oh_2023}
H.~Ko, S.~Lee, and S.~Oh.
\newblock Sharp smoothing properties of averages over curves.
\newblock {\em Forum Math. Pi}, 11:Paper No. e4, 33, 2023.

\bibitem{Kolasa1999OnSV}
L.~Kolasa and T.~Wolff.
\newblock On some variants of the {K}akeya problem.
\newblock {\em Pacific J. Math.}, 190(1):111--154, 1999.

\bibitem{pyz}
M.~Pramanik, T.~Yang, and J.~Zahl.
\newblock A {F}urstenberg-type problem for circles, and a {K}aufman-type restricted projection theorem in $\mathbb{R}^3$, 2024.

\bibitem{Schlag2003}
W.~Schlag.
\newblock On continuum incidence problems related to harmonic analysis.
\newblock {\em J. Funct. Anal.}, 201(2):480--521, 2003.

\bibitem{bible}
E.~M. Stein and T.~S. Murphy.
\newblock {\em Harmonic Analysis (PMS-43): Real-Variable Methods, Orthogonality, and Oscillatory Integrals. (PMS-43)}.
\newblock Princeton University Press, 1993.

\bibitem{wang2022stickykakeyasetssticky}
H.~Wang and J.~Zahl.
\newblock Sticky {K}akeya sets and the sticky {K}akeya conjecture, 2022.

\bibitem{AssouadDimKakeya}
H.~Wang and J.~Zahl.
\newblock The {A}ssouad dimension of {K}akeya sets in {$\mathbb R^3$}.
\newblock {\em Invent. Math.}, 241(1):153--206, 2025.

\bibitem{wang2025volumeestimatesunionsconvex}
H.~Wang and J.~Zahl.
\newblock Volume estimates for unions of convex sets, and the kakeya set conjecture in three dimensions, 2025.

\bibitem{Wolff1995}
T.~Wolff.
\newblock An improved bound for {K}akeya type maximal functions.
\newblock {\em Rev. Mat. Iberoamericana}, 11(3):651--674, 1995.

\bibitem{Wolff1997}
T.~Wolff.
\newblock A {K}akeya-type problem for circles.
\newblock {\em Amer. J. Math.}, 119(5):985--1026, 1997.

\bibitem{WolffBook}
T.~Wolff, I.~Laba, and C.~Shubin.
\newblock {\em Lectures on harmonic analysis}, volume 29;29.;.
\newblock American Mathematical Society, Providence, R.I, 2003.

\bibitem{Zahl2012}
J.~Zahl.
\newblock On the {W}olff circular maximal function.
\newblock {\em Illinois J. Math.}, 56(4):1281--1295, 2012.

\bibitem{zahl2025maximalfunctionsassociatedfamilies}
J.~Zahl.
\newblock On maximal functions associated to families of curves in the plane, 2025.

\end{thebibliography}

\vspace{1em}
\noindent \textsc{Department of Mathematics, 1984 Mathematics Road, The University of British Columbia Vancouver, BC Canada V6T 1Z2}. \\
\textit{Email address}: \texttt{chjwang@math.ubc.ca}

\end{document}